\documentclass[12pt,reqno]{amsart}

\addtolength{\textwidth}{4cm} \addtolength{\hoffset}{-2cm}
\addtolength{\marginparwidth}{-1cm} \addtolength{\textheight}{2cm}
\addtolength{\voffset}{-1cm}

\usepackage{hyperref}
\usepackage{amsthm}
\usepackage{amssymb}
\usepackage{amsxtra}
\usepackage{kopp}  
\usepackage{epsfig}
\usepackage{verbatim}
\usepackage{cleveref}

\numberwithin{equation}{section}

\theoremstyle{plain}
\newtheorem{theorem}{Theorem}[section]
\newtheorem{thm}[theorem]{Theorem}
\newtheorem{prop}[theorem]{Proposition}
\newtheorem{cor}[theorem]{Corollary}
\newtheorem{lem}[theorem]{Lemma}

\theoremstyle{definition}
\newtheorem{defn}[theorem]{Definition}

\theoremstyle{remark}

\include{header}

\begin{document}

\title[Indefinite zeta functions]{Indefinite zeta functions}
\author{Gene S. Kopp}

\address{School of Mathematics, University of Bristol, Bristol, UK and Heilbronn Institute for Mathematical Research, Bristol, UK}
\email{gene.kopp@bristol.ac.uk}

\keywords{indefinite quadratic form, indefinite theta function, Siegel modular form, Epstein zeta function, real quadratic field, Stark conjectures}

\date{\today}

\begin{abstract} 
We define generalised zeta functions associated to indefinite quadratic forms of signature $(g-1,1)$---and more generally, to complex symmetric matrices whose imaginary part has signature $(g-1,1)$---and we investigate their properties. These indefinite zeta functions are defined as Mellin transforms of indefinite theta functions in the sense of Zwegers, which are in turn generalised to the Siegel modular setting. We prove an analytic continuation and functional equation for indefinite zeta functions. We also show that indefinite zeta functions in dimension 2 specialise to differences of ray class zeta functions of real quadratic fields, whose leading Taylor coefficients at $s=0$ are predicted to be logarithms of algebraic units by the Stark conjectures.
\end{abstract}

\maketitle

\section{Introduction}

The Dedekind zeta functions of imaginary quadratic fields are specialisations of real analytic Eisenstein series. For example, for the Gaussian field $K = \Q(i)$ and $\re(s)>1$,
\begin{equation}
\zeta_{\Q(i)}(s) := \sum_{\substack{\aa \leq \Z[i] \\ \aa \neq 0}} \frac{1}{N(\aa)^s} = \frac{1}{4}\sum_{\substack{(m,n)\in \Z^2 \\ (m,n) \neq (0,0)}} \frac{1}{(m^2+n^2)^s} = \frac{\zeta(s)}{2}E(i,s),
\end{equation}
where $E(\tau,s)$ is the real analytic Eisenstein series given for $\im(\tau)>0$ and $\re(s)>1$ by
\begin{equation}
E(\tau,s) := \foh\sum_{\substack{(m,n)\in \Z^2 \\ \gcd(m,n) =1}} \frac{\im(\tau)^s}{\abs{m\tau + n}^{2s}}.
\end{equation}
Placing the discrete set of Dedekind zeta functions into the continuous family of real analytic Eisenstein series allows us to prove many interesting properties of Dedekind zeta functions---for instance, the first Kronecker limit formula is seen to relate $\zeta_K'(0)$ to the value of the Dedekind eta function $\eta(\tau)$ at a CM point.

In this paper, we find a new continuous family of functions, called \textit{indefinite zeta functions} in which ray class zeta functions of \textit{real quadratic} fields sit as a discrete subset. Moreover, we construct \textit{indefinite zeta functions} attached to quadratic forms of signature $(g-1,1)$. In the case $g=2$, norm forms of quadratic number fields give the specialisation of indefinite zeta functions to ray class zeta functions of real quadratic fields.

Indefinite zeta functions have analytic continuation with a functional equation about the line $s=\foh$. This is in contrast to many zeta functions defined by a sum over a cone---such as multiple zeta functions and Shintani zeta functions---which have analytic continuation but no functional equation. Shintani zeta functions are used to give decompositions of ray class zeta functions attached to totally real number fields, which are then used to evaluate those ray class zeta functions (and the closely related Hecke $L$-functions) at non-positive integers \cite{shintanizeta} (see also Neukirch's treatment in \cite{neukirch}, Chapter VII \S 9).
Our specialisation of indefinite zeta functions to ray class zeta functions of real quadratic fields is an alternative to Shintani decomposition that gives a different interpolation between zeta functions attached to real quadratic number fields. 
Indefinite zeta functions also differ from archetypical zeta functions in that they are not (generally) expressed as Dirichlet series.

Indefinite zeta functions are Mellin transforms of indefinite theta functions. Indefinite theta functions were first described by Marie-France Vign\'{e}ras, who constructed modular indefinite theta series with terms weighted by a weight function satisfying a particular differential equation \cite{vig1,vig2}. Sander Zwegers rediscovered indefinite theta functions and used them to construct harmonic weak Maass forms whose holomorphic parts are essentially the mock theta functions of Ramanujan \cite{zwegers}. Zwegers's work triggered an explosion of interest in mock modular forms, with applications to partition identities \cite{bo1}, quantum modular forms and false theta functions \cite{for}, period integrals of the $j$-invariant \cite{duke}, sporadic groups \cite{umbral}, and quantum black holes \cite{blackholes}. Readers looking for additional exposition on these topics may also be interested in the book \cite{bfor} (especially section 8.2) and lecture notes \cite{rolennotes, zagiermock}.

The indefinite theta functions in this paper are a generalisation of those introduced by Zwegers to the Siegel modular setting. Our generalised indefinite zeta functions satisfy a modular transformation law with respect to the Siegel modular group $\Sp_n(\Z)$.

\subsection{Main theorems}

Given a positive definite quadratic form $Q(x_1, \ldots x_g)$ with real coefficients, it is possible to associate a ``definite zeta function'' $\zeta_Q(s)$, sometimes called the Epstein zeta function:
\begin{equation}\label{eq:epstein}
\zeta_Q(s) := \sum_{(n_1,\ldots,n_g) \in \Z^g \setminus \{0\}} \frac{1}{Q(n_1,\ldots,n_g)^s}.
\end{equation}
However, if $Q$ is instead an indefinite quadratic form, the series in \cref{eq:epstein} does not converge. One way to fix this issue it to restrict the sum to a closed subcone $C$ of the double-cone of positivity $\{v \in \R^g : Q(v)>0\}$. This gives rise to a partial indefinite zeta function
\begin{equation}\label{eq:partial}
\zeta^C_Q(s) := \sum_{(n_1,\ldots,n_g) \in C \cap \Z^g} \frac{1}{Q(n_1,\ldots,n_g)^s}.
\end{equation}
However, unlike the Epstein zeta function, this partial zeta function does not satisfy a functional equation.

Our family of \textit{completed indefinite zeta functions} do satisfy a functional equation, although they are not (generally) Dirichlet series.
The completed indefinite zeta function $\widehat{\zeta}^{c_1,c_2}_{p,q}(\Omega,s)$ is defined in terms of the following parameters:
\begin{itemize}
\item a complex symmetric (not necessarily Hermitian) matrix $\Omega = \Omega^\top = iM + N$, with $M = \im(\Omega)$ having signature $(g-1,1)$;
\item ``characteristics'' $p,q \in \R^g$;
\item ``cone parameters'' $c_1,c_2 \in \C^g$ satisfying the inequalities $\ol{c_j}^\top M c_j < 0$;
\item a complex variable $s \in \C$.
\end{itemize}
Due to invariance properties, $\widehat{\zeta}^{c_1,c_2}_{p,q}(\Omega,s)$ remains well-defined with several of the parameters taken to be in quotient spaces:
\begin{itemize}
\item the characteristics on a torus, $p,q \in \R^g/\Z^g$;
\item the cone parameters in complex projective space, $c_1, c_2 \in \Pj^{g-1}(\C)$.
\end{itemize}
The functional equation for the completed indefinite zeta function is given by the following theorem.
\newtheorem*{thm:zetafun}{\normalfont{\textbf{\Cref{thm:zetafun}}}}
\begin{thm}[Analytic continuation and functional equation]\label{thm:zetafun}
The function $\widehat{\zeta}^{c_1,c_2}_{a,b}(\Omega,s)$ may be analytically continued to an entire function on $\C$.
It satisfies the functional equation 
\begin{equation}
\widehat{\zeta}^{c_1,c_2}_{p,q}\left(\Omega,\frac{g}{2}-s\right) =  \frac{e(p^\top q)}{\sqrt{\det(-i\Omega)}} \widehat{\zeta}^{\ol\Omega c_1,\ol \Omega c_2}_{-q,p}\left(-\Omega^{-1},s\right).
\end{equation}
\end{thm}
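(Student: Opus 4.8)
The plan is to run the classical Riemann--Epstein argument: express $\widehat{\zeta}^{c_1,c_2}_{p,q}(\Omega,s)$ as a Mellin transform of an indefinite theta function, and deduce both the analytic continuation and the functional equation from the modular inversion law of that theta function. The only features distinguishing this from the positive-definite Epstein case are (a) the ambient quadratic form is indefinite, so convergence and decay of the theta series must be harvested from the Zwegers-type weight factor rather than from Gaussian decay, and (b) the relevant inversion law is the Siegel--Zwegers transformation formula for indefinite theta functions --- sending $\Omega\mapsto-\Omega^{-1}$, $(p,q)\mapsto(-q,p)$, $c_j\mapsto\ol\Omega c_j$ --- which is proved earlier in the paper. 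Granting these, both assertions reduce to a change of variables inside the Mellin integral.

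Concretely, I would take as the starting point the representation
\[
\widehat{\zeta}^{c_1,c_2}_{p,q}(\Omega,s) = \int_0^\infty \Theta^{c_1,c_2}_{p,q}(t\Omega)\, t^{s}\,\frac{dt}{t},
\]
where $\Theta^{c_1,c_2}_{p,q}$ is the indefinite theta function and the rescaling $\Omega\mapsto t\Omega$ ($t>0$) preserves the signature-$(g-1,1)$ hypothesis on the imaginary part. The key structural point is that the $n=0$ lattice term of $\Theta^{c_1,c_2}_{p,q}$ is annihilated by the difference of sign (resp.\ error) functions in its weight, so $\Theta^{c_1,c_2}_{p,q}$ has \emph{no constant term}; this is exactly why the continuation will be entire rather than merely meromorphic. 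The analytic heart is then the estimate that the inequalities $\ol{c_j}^\top M c_j<0$ force the Zwegers weight to decay exponentially in the negative directions of $M$, fast enough to overcome the exponential growth of $\lvert e(\tfrac12 n^\top t\Omega\, n)\rvert$ there, so that $\Theta^{c_1,c_2}_{p,q}(t\Omega)\to 0$ exponentially as $t\to\infty$ (after justifying the interchange of the lattice sum with the limit, e.g.\ by dominated convergence). Since the signature condition and the inequalities $\ol{c_j}^\top M c_j<0$ are preserved under $\Omega\mapsto-\Omega^{-1}$, $c_j\mapsto\ol\Omega c_j$ --- a fact established earlier --- the same decay holds for $\Theta^{\ol\Omega c_1,\ol\Omega c_2}_{-q,p}\bigl(t(-\Omega^{-1})\bigr)$; combined with the inversion law in its rescaled form
\[
\Theta^{c_1,c_2}_{p,q}\bigl(\tfrac1t\Omega\bigr) = \frac{t^{g/2}\,e(p^\top q)}{\sqrt{\det(-i\Omega)}}\;\Theta^{\ol\Omega c_1,\,\ol\Omega c_2}_{-q,\,p}\bigl(t(-\Omega^{-1})\bigr),
\]
this also gives exponential decay of $\Theta^{c_1,c_2}_{p,q}(t\Omega)$ as $t\to 0^+$. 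Hence the defining integral converges for \emph{every} $s\in\C$ and represents an entire function.

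For the functional equation I would substitute $t\mapsto 1/t$ in the integral defining $\widehat{\zeta}^{c_1,c_2}_{p,q}(\Omega,\tfrac{g}{2}-s)$ and then apply the displayed inversion law, recognising the outcome --- via the same Mellin representation, now with the transformed data --- as the transformed zeta function:
\[
\widehat{\zeta}^{c_1,c_2}_{p,q}\!\Bigl(\Omega,\tfrac{g}{2}-s\Bigr) = \int_0^\infty \Theta^{c_1,c_2}_{p,q}\bigl(\tfrac1t\Omega\bigr)\, t^{s-\frac{g}{2}}\,\frac{dt}{t} = \frac{e(p^\top q)}{\sqrt{\det(-i\Omega)}}\,\widehat{\zeta}^{\ol\Omega c_1,\ol\Omega c_2}_{-q,p}\bigl(-\Omega^{-1},s\bigr).
\]
This is the asserted identity. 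If one preferred, the split $\int_0^\infty=\int_0^1+\int_1^\infty$ followed by the substitution on $[0,1]$ would give a manifestly entire, manifestly symmetric two-term formula with the same conclusion; here the one-line version suffices because the integral already converges on all of $\C$.

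The main obstacle is the decay estimate of the second paragraph: unlike in the definite case there is no termwise Gaussian bound, and one must show quantitatively that the exponential smallness of the sign/error-function difference in the negative cone of $M$ outpaces the exponential growth of the Gaussian factor there --- this is where the hypotheses $\ol{c_j}^\top M c_j<0$ are genuinely used --- while simultaneously controlling the interchange of summation over $\Z^g$ with the Mellin integral uniformly for $t$ in $[1,\infty)$. The supporting facts needed from earlier in the paper, namely the theta inversion law and the preservation of the cone and signature conditions under $\Omega\mapsto-\Omega^{-1}$, are then inserted mechanically, and the remainder of the argument is formal.
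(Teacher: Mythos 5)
Your proposal is correct and follows essentially the same route as the paper: Mellin representation, exponential decay of the indefinite theta null harvested from the Zwegers weight (the paper's convergence proposition), vanishing of the $n=-q$ term, and the $\Omega\mapsto-\Omega^{-1}$ inversion law with $(p,q)\mapsto(-q,p)$, $c_j\mapsto\ol\Omega c_j$. The only cosmetic difference is that the paper splits the integral at a point $r$ and applies the inversion to the piece near $0$, obtaining a manifestly entire two-term formula (which it also wants for numerical computation), whereas you observe that the inversion law already gives exponential decay as $t\to0^+$ so the original integral converges for all $s$ and the functional equation is a one-line substitution; you note this equivalence yourself.
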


In the case of real cone parameters, the completed indefinite zeta function has a series expansion that may be viewed as an analogue of the Dirichlet series expansion of a definite zeta function. It decomposes (up to $\Gamma$-factors) as a sum of an \textit{incomplete indefinite zeta function} $\zeta^{c_1,c_2}_{p,q}(\Omega,s)$, which is a Dirichlet series, and correction terms $\xi^{c_j}_{p,q}(\Omega,s)$ that depend only on the cone parameters $c_1$ and $c_2$ separately.
\newtheorem*{thm:splitsy}{\normalfont{\textbf{\Cref{thm:splitsy}}}}
\begin{thm}[Series decomposition]\label{thm:splitsy}
For real cone parameters $c_1,c_2 \in \R^g$, and $\re(s)>1$, the completed indefinite zeta function may be written as
\begin{equation}\label{eq:split}
\widehat{\zeta}^{c_1,c_2}_{p,q}(\Omega,s) = \pi^{-s}\Gamma(s)\zeta^{c_1,c_2}_{p,q}(\Omega,s)-\pi^{-(s+\foh)}\Gamma\left(s+\foh\right)\left(\xi^{c_2}_{p,q}(\Omega,s)-\xi^{c_1}_{p,q}(\Omega,s)\right),
\end{equation}
where $M = \im(\Omega)$,
\begin{align}\label{eq:splitone}
\zeta^{c_1,c_2}_{p,q}(\Omega,s) &= \foh\sum_{n \in \Z^g+q}\left(\sgn(c_1^\top Mn) - \sgn(c_2^\top Mn)\right)e\left(p^\top n\right)Q_{-i\Omega}(n)^{-s},
\end{align}
and
\begin{align}
\xi^{c}_{p,q}(\Omega,s) &= \foh\sum_{\nu \in \Z^g+q} \sgn(c^\top Mn)e\left(p^\top n\right)\left(\frac{(c^\top Mn)^2}{Q_M(c)}\right)^{-s}\nn \\&\ \ \ \ \ \ \ \ \ \ \ \ \ \cdot\hf\left(s,s+\foh,s+1;\frac{2Q_M(c)Q_{-i\Omega}(n)}{(c^\top Mn)^2}\right).
\end{align}
Here, for any complex symmetric matrix $\Lambda$, $Q_\Lambda(v) = v^\top \Lambda v$ denotes the associated quadratic form; also, $\hf$ denotes a hypergeometric function (see \cref{eq:hf}). The summand in \cref{eq:splitone} should always be interpreted as $0$ when $\sgn(c_1^\top Mn) = \sgn(c_2^\top Mn)$; whenever it is nonzero, $\re(Q_{-i\Omega}(n))>0$, and the complex power is interpreted as $Q_{-i\Omega}(n)^{-s} = \exp\left(-s\log\left(Q_{-i\Omega}(n)\right)\right)$ where $\log$ is the principal branch of the logarithm with a branch cut along the negative real axis.
\end{thm}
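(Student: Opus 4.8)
The plan is to start from the definition of $\widehat{\zeta}^{c_1,c_2}_{p,q}(\Omega,s)$ as the Mellin transform, in the scaling variable $t>0$ (that is, with $\Omega$ replaced by $t\Omega$), of the indefinite theta function $\vartheta^{c_1,c_2}_{p,q}$, whose summand over $n\in\Z^g+q$ is a difference of two completed weight factors---one attached to $c_1$, one to $c_2$---times $e(p^\top n)$ and the Gaussian $e^{-\pi t\,Q_{-i\Omega}(n)}$. The key algebraic input is that, for a \emph{real} cone vector $c$ with $Q_M(c)<0$, each completed weight factor splits as $\sgn(c^\top Mn)-\sgn(c^\top Mn)\,\beta\big(X_n t\big)$, where $X_n$ is a fixed positive multiple of $\tfrac{(c^\top Mn)^2}{-Q_M(c)}$ and $\beta(x)=\int_x^\infty v^{-1/2}e^{-\pi v}\,dv$ is a normalization of the incomplete Gamma function $\Gamma(\tfrac12,\pi x)$. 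This is just Zwegers's identity $E(u)=\sgn(u)\big(1-\beta(u^2)\big)$, and it is the only place the hypothesis $c_1,c_2\in\R^g$ is used.

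Feeding this splitting into the Mellin integral breaks $\widehat{\zeta}$ into a ``$\sgn$ part'' and a ``$\beta$ part''. For $\re(s)>1$ and real cone parameters, the absolute convergence of $\vartheta^{c_1,c_2}_{p,q}$ (established earlier) together with the decay $\beta(x)\ll x^{-1/2}e^{-\pi x}$ licenses interchanging the sum over $n$ with the $t$-integral, so I can treat each part term by term. For the $\sgn$ part, each term contributes $\int_0^\infty t^{s-1}e^{-\pi t\,Q_{-i\Omega}(n)}\,dt=\pi^{-s}\Gamma(s)\,Q_{-i\Omega}(n)^{-s}$. To justify this I need $\re Q_{-i\Omega}(n)=Q_M(n)>0$ on the support $\sgn(c_1^\top Mn)\ne\sgn(c_2^\top Mn)$ of the summand---the side assertion made in the statement---which follows from the reverse Cauchy--Schwarz inequality $(c^\top Mn)^2\ge Q_M(c)Q_M(n)$ for the Lorentzian form $M$ (valid since $Q_M(c)<0$, using that $c_1,c_2$ lie in the same component of $\{Q_M<0\}$); the same inequality also shows the principal branch is the correct one. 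Summing over $n$ recovers $\pi^{-s}\Gamma(s)\,\zeta^{c_1,c_2}_{p,q}(\Omega,s)$.

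For the $\beta$ part I rewrite $\beta(X_n t)=(X_n t)^{1/2}\int_1^\infty w^{-1/2}e^{-\pi X_n t w}\,dw$, interchange the $t$- and $w$-integrals, and carry out the (now Gaussian-type) $t$-integral: this produces the factor $\pi^{-(s+\foh)}\Gamma\big(s+\foh\big)$ and leaves $\int_1^\infty w^{-1/2}\big(X_n w+Q_{-i\Omega}(n)\big)^{-(s+\foh)}\,dw$, where the bound $X_n+Q_M(n)\ge0$ (again reverse Cauchy--Schwarz) keeps the real part of the base nonnegative along the path of integration. After the substitution $w\mapsto 1/w$ this integral is exactly the Euler integral that represents $\hf\big(s,s+\foh,s+1;z\big)$ (see \cref{eq:hf}) with $z=\tfrac{2Q_M(c)\,Q_{-i\Omega}(n)}{(c^\top Mn)^2}$; collecting the powers of $\tfrac{(c^\top Mn)^2}{-Q_M(c)}$ yields the prefactor $\big(\tfrac{(c^\top Mn)^2}{Q_M(c)}\big)^{-s}$ and the sign $\sgn(c^\top Mn)$, so the sum over $n$ becomes $\pi^{-(s+\foh)}\Gamma\big(s+\foh\big)\xi^{c}_{p,q}(\Omega,s)$. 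The overall minus in $E=\sgn-\sgn\cdot\beta$ makes the $c_2$-contribution enter with the opposite sign to the $c_1$-contribution, giving $-\pi^{-(s+\foh)}\Gamma\big(s+\foh\big)\big(\xi^{c_2}_{p,q}(\Omega,s)-\xi^{c_1}_{p,q}(\Omega,s)\big)$; adding the two parts yields \cref{eq:split}.

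I expect the main obstacle to be the bookkeeping in this last step: pinning down the scaling and normalization conventions of $\vartheta^{c_1,c_2}_{p,q}$ so that the numerical constants (and the $\tfrac12$ in front of $\xi^{c}$) come out exactly as stated, keeping the branches of $Q_{-i\Omega}(n)$ and of $\tfrac{(c^\top Mn)^2}{Q_M(c)}$ straight, and verifying that each interchange of a sum with an integral (and of the two integrals) is legitimate for $\re(s)>1$---in particular handling the borderline lattice points $n$ proportional to some $c_j$, where $X_n+Q_M(n)$ degenerates to $0$.
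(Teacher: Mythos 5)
Your proposal is correct, and at the top level it is the same strategy as the paper's: integrate the theta series against $t^s\,\tfrac{dt}{t}$ term by term (justified by absolute convergence for $\re(s)$ large) and identify each term's Mellin integral as a $\Gamma$-factor piece plus a hypergeometric piece. Where you genuinely diverge is in how that single-term integral is evaluated. The paper first computes $\int_0^\infty \eE(\sqrt{\lambda t})e^{-\mu t}t^s\tfrac{dt}{t}$ as a modified beta function $\tB\bigl(\tfrac{\pi\lambda}{\mu};\foh,\foh-s\bigr)$ (\Cref{mellinlem}), and then extracts the two pieces by applying the inversion identity $\tB(1/x;a,b)=\tfrac{\Gamma(a)\Gamma(1-a-b)}{\Gamma(1-b)}-\tB(x;1-a-b,b)$ together with the hypergeometric representation of $\tB$ (\Cref{hfrelation}, feeding into \Cref{lem:laneboy}); complex $\mu$ is then reached by analytic continuation. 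You instead split the weight factor \emph{before} integrating, via Zwegers's identity $\eE(\alpha)=\foh\sgn(\alpha)\bigl(1-\beta(\alpha^2)\bigr)$, so the $\Gamma(s)\mu^{-s}$ piece drops out immediately and the hypergeometric piece comes from the Euler integral representation of $\hf\bigl(s,s+\foh;s+1;\cdot\bigr)$ applied to the Mellin transform of the $\beta$-term. These are two organizations of the same computation --- your $\beta$-splitting is the pointwise form of the paper's $\tB$-inversion identity --- but your route avoids the modified-beta-function calculus entirely and makes the parallel with Zwegers's ``holomorphic part plus Eichler integral of the shadow'' decomposition explicit, at the cost of having to justify one extra interchange of integrals. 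One reassurance on the worry you flag at the end: with the normalization actually used in the theta series ($X_n=\tfrac{(c^\top Mn)^2}{-\foh c^\top Mc}$, i.e.\ your ``fixed positive multiple'' is $2$), one has $X_n+Q_M(n)=Q_{A^c}(n)$ for the positive definite matrix $A^c$ from the proof of \Cref{prop:conv}, so the exponent in the $\beta$-part is bounded below by $\epsilon\|n\|^2$ and never degenerates for $n\neq 0$; this both removes the borderline cases and gives the domination needed for your sum--integral interchange.
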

The series defining the incomplete indefinite zeta function $\zeta^{c_1,c_2}_{p,q}(\Omega,s)$ is a variant of the partial indefinite zeta function \ref{eq:partial}, which may be seen by writing it as
\begin{equation}
\zeta^{c_1,c_2}_{p,q}(\Omega,s) =  \sideset{}{_{}^\ast}\sum_{n \in C^+ \cap (\Z^g + q)} e\left(p^\top n\right)Q_{-i\Omega}(n)^{-s} - \sideset{}{_{}^\ast}\sum_{n \in C^- \cap (\Z^g + q)} e\left(p^\top n\right)Q_{-i\Omega}(n)^{-s},
\end{equation}
where $C^+ = \{v \in \R^g : \sgn(c_2^\top Mv) \leq 0 \leq \sgn(c_1^\top Mv)\}$ and $C^- = \{v \in \R^g : \sgn(c_1^\top Mv) \leq 0 \leq \sgn(c_2^\top Mv)\}$ are subcones of the two components of the double-cone of positivity of $Q_M(v)$, and the notation $\sum^\ast$ means that points on the boundary of the cone are weighted by $\foh$, except for $n=0$, which is excluded.

The indefinite zeta function is defined as a Mellin transform of an \textit{indefinite theta function} (literally, an \textit{indefinite theta null with real characteristics}, see \Cref{defn:indefzeta} and the definitions in \Cref{sec:indeftheta}). Indefinite theta functions were introduced by Sander Zwegers in his PhD thesis \cite{zwegers}. The indefinite theta functions introduced in this paper generalise Zwegers's work to the Siegel modular setting.

Our definition of indefinite zeta functions is in part motivated by an application to the computation of Stark units over real quadratic fields, which will be covered more thoroughly in a companion paper \cite{koppklf}. In special cases, an important symmetry, which we call \textit{$P$-stability}, causes the $\xi^{c_1}$ and $\xi^{c_2}$ terms in \cref{eq:split} to cancel, leaving a Dirichlet series $\zeta^{c_1,c_2}_{p,q}(\Omega,s)$. In the 2-dimensional case ($g=2$), this Dirichlet series is a difference of two ray class zeta functions of an order in a real quadratic field. 
\newtheorem*{thm:special}{\normalfont{\textbf{\Cref{thm:special}}}}
\begin{thm}[Specialization to a ray class zeta function]\label{thm:special}
Let $K$ be a real quadratic number field, and let $\Cl_{\cc\infty_1\infty_2}$ denote the ray class group of $\OO_K$ modulo $\cc\infty_1\infty_2$ (see \cref{eq:Cl}). For each class $A \in \Cl_{\cc\infty_1\infty_2}$ and integral ideal $\bb \in A^{-1}$, there exists a real symmetric matrix $M$ of signature $(1,1)$, along with $c_1, c_2, q \in \C^2$, such that
\begin{equation}
(2\pi N(\bb))^{-s}\Gamma(s)Z_A(s) = \widehat{\zeta}^{c_1,c_2}_{0,q}(iM,s).
\end{equation}
Here, $Z_A(s)$ is the \textit{differenced ray class zeta function} associated to $A$ (see \Cref{defn:diffzeta}).
\end{thm}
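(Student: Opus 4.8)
The plan is to unfold $Z_A(s)$ into a sum over lattice points lying in a cone, to recognise this sum as the incomplete indefinite zeta function $\zeta^{c_1,c_2}_{0,q}(iM,s)$ of \Cref{thm:splitsy}, and to check that the $\xi$-correction terms in the decomposition \eqref{eq:split} cancel so that $\widehat{\zeta}^{c_1,c_2}_{0,q}(iM,s) = \pi^{-s}\Gamma(s)\zeta^{c_1,c_2}_{0,q}(iM,s)$.

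I would begin with the ideal-to-lattice dictionary. Choosing $\bb$ in the class $A^{-1}$ coprime to $\cc$, the map $\aa \mapsto \nu$, where $\aa\bb = (\nu)$ and $\nu \in \bb$, sets up a bijection between the integral ideals $\aa \in A$ and the elements $\nu$ of the coset $\nu_0 + \cc\bb$ (the elements of $\bb$ congruent to $1$ modulo $\cc$) lying in the totally positive cone, taken modulo the group $U$ of totally positive units congruent to $1$ modulo $\cc$; the totally negative generators account for the other half of the difference defining $Z_A$. Fixing a $\Z$-basis of the sublattice $\cc\bb \subseteq \bb$ and using the two real embeddings $K \hookrightarrow \R^2$, the norm form $\nu \mapsto N(\nu)$ becomes a binary quadratic form of signature $(1,1)$; I take $M$ to be twice its Gram matrix (so that $Q_M(n) = 2N(\nu) = 2N(\bb)N(\aa)$ under the correspondence), let $q$ be the coordinate vector of $\nu_0$ in the chosen basis, and set $\Omega = iM$, so that $Q_{-i\Omega}(n)^{-s} = (2N(\bb))^{-s}N(\aa)^{-s}$ and the $\pi^{-s}\Gamma(s)$ prefactor of \eqref{eq:split} combines with this to produce the claimed $(2\pi N(\bb))^{-s}\Gamma(s)$.

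Next I would produce the cone parameters from the fundamental unit $\varepsilon$ of $U$. Multiplication by $\varepsilon$ preserves $\cc\bb$ and the norm form, hence gives $\gamma_\varepsilon \in \mathrm{GL}_2(\Z)$ with $\gamma_\varepsilon^\top M \gamma_\varepsilon = M$; it is a hyperbolic map, and I choose $c_1 \in \R^2$ generic and set $c_2 = \gamma_\varepsilon c_1$ so that the cones $C^+$ and $C^-$ of \Cref{thm:splitsy} are fundamental domains for $\langle\gamma_\varepsilon\rangle$ on the two components of $\{Q_M > 0\}$ (genericity of $c_1$ puts no nonzero lattice point on the boundary hyperplanes, so the $\tfrac{1}{2}$-weighting is vacuous). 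With these choices, the Dirichlet series $\zeta^{c_1,c_2}_{0,q}(iM,s)$, written as the difference of the two cone sums, is exactly $(2N(\bb))^{-s}Z_A(s)$ for $\re(s) > 1$.

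Finally, the cancellation $\xi^{c_2}_{0,q}(iM,s) = \xi^{c_1}_{0,q}(iM,s)$: since $\gamma_\varepsilon$ is an isometry of $M$ and $\varepsilon \equiv 1 \pmod{\cc}$, it fixes $p = 0$ and fixes $q$ modulo $\Z^2$ (as $(\varepsilon - 1)\nu_0 \in \cc\bb$); substituting $n = \gamma_\varepsilon m$ in the explicit series for $\xi^{c_2}_{0,q}$ from \Cref{thm:splitsy} and using $c_2^\top M\gamma_\varepsilon = c_1^\top M$, $Q_M(c_2) = Q_M(c_1)$ and $Q_M(\gamma_\varepsilon m) = Q_M(m)$ transforms it term by term into the series for $\xi^{c_1}_{0,q}$. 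Hence the bracketed difference in \eqref{eq:split} vanishes, giving the identity for $\re(s) > 1$, and then for all $s \in \C$ since the left side is entire by \Cref{thm:zetafun}. I expect the main obstacle to be precisely this $P$-stability step together with the care needed to align the lattice, the characteristic $q$, and the cone parameter $c_2$ with a single fundamental unit $\varepsilon$; once $\bb$ is chosen coprime to $\cc$ and $c_2 = \gamma_\varepsilon c_1$, the remaining work is bookkeeping of the constants in the ideal-to-lattice dictionary.
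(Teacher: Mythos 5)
Your proposal is correct and follows essentially the same route as the paper: unfold $Z_A(s)$ via $\aa\mapsto\nu$ with $\aa\bb=(\nu)$ into a signed lattice sum over a coset of $\cc\bb$ intersected with the double cone of positivity, take $M$ from the norm form, $q$ from the CRT representative, and $c_2 = \gamma_\varepsilon c_1$ for the totally positive unit $\varepsilon\equiv 1\pmod{\cc}$, then kill the $\xi$-terms by the substitution $n\mapsto\gamma_\varepsilon n$ — which is precisely the $P$-stability mechanism the paper invokes via \Cref{thm:zetastable}. The only cosmetic difference is that you carry out the $\xi$-cancellation by hand and impose genericity on $c_1$ to avoid boundary lattice points, whereas the paper keeps the $\tfrac12$-weighting and cites the $P$-stability theorem directly.
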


This paper is organised as follows. In \Cref{sec:riemann}, we review the theory of Riemann theta functions, which we extend to the indefinite case in \Cref{sec:indeftheta}, generalising Chapter 2 of Zwegers's PhD thesis \cite{zwegers}. In \Cref{sec:defzeta} and \Cref{sec:indefzeta}, we define definite and indefinite zeta functions, respectively, and prove their analytic continuations and functional equations; in particular, we prove \Cref{thm:zetafun}. In \Cref{sec:3series}, we prove a general series expansion for indefinite zeta functions, which is \Cref{thm:splitsy}. In \Cref{sec:quadratic}, we prove that indefinite zeta functions restrict to differences of ray class zeta functions of real quadratic fields, which is \Cref{thm:special}, and we work through an example computation of a Stark unit using indefinite zeta functions.

\subsection{Notation and conventions}

We list for reference the notational conventions used in this paper.
\begin{itemize}
\item $e(z) := \exp(2\pi i z)$ is the complex exponential, and this notation is used for $z \in \C$ not necessarily real.
\item $\HH := \{\tau \in \C : \im(\tau)>0\}$ is the complex upper half-plane.
\item Non-transposed vectors $v \in \C^g$ are always column vectors; the transpose $v^\top$ is a row vector.
\item If $\Lambda$ is a $g \times g$ matrix, then $\Lambda^\top$ is its transpose, and (when $\Lambda$ is invertible) $\Lambda^{-\top}$ is a shorthand for $\left(\Lambda^{-1}\right)^\top$.
\item $Q_\Lambda(v)$ denotes the quadratic form $Q_\Lambda(v) = \foh v^\top \Lambda v$, where $\Lambda$ is a $g \times g$ matrix, and $v$ is a $g \times 1$ column vector.
\item $\left.f(c)\right|_{c=c_1}^{c_2} := f(c_2)-f(c_1)$, where $f$ is any function taking values in an additive group.
\item If $v = \coltwo{v_1}{v_2} \in \C^2$ and $f$ is a function of $\C^2$, we may write $f(v) = f\coltwo{v_1}{v_2}$ rather than $f\left(\coltwo{v_1}{v_2}\right)$.
\item Unless otherwise specified, the logarithm $\log(z)$ is the standard principal branch with $\log(1)=1$ and a branch cut along the negative real axis, and $z^a$ means $\exp(a \log(z))$.
\item Throughout the paper, $\Omega$ will be used to denote a $g \times g$ complex symmetric matrix. We will often need to express $\Omega = iM+N$ where $M,N$ are real symmetric matrices. Matrices denoted by $M$ and $N$ will always have real entries, even when we do not say so explicitly.
\end{itemize}

\subsection{Applications and future work}

A paper in progress will prove a Kronecker limit formula for indefinite zeta functions in dimension 2, which specialises to an analytic formula for Stark units \cite{koppklf}. This formula may be currently be found in the author's PhD thesis \cite{koppthesis}.

While one application of indefinite zeta functions (the new analytic formula for Stark units) is known, we are hopeful that others will be found. We formulate a few research questions to motivate future work.
\begin{itemize}
\item Can one formulate a full modular transformation law for indefinite theta functions $\Theta_\Omega^{c_1,c_2}[f](z,\Omega)$ for some general class of test functions $f$?
\item (How) can indefinite theta functions of arbitrary signature, as introduced by Alexandrov, Banerjee, Manschot, and Pioline \cite{generror}, Nazaroglu \cite{nazaroglu}, and Westerholt-Raum \cite{westerholt}, be generalised to the Siegel modular setting? What do the Mellin transforms of indefinite theta functions of arbitrary signature look like? (Note: Since this paper was posted, a preprint of Roehrig \cite{roehrig} has appeared that answers the first question by providing a description of modular indefinite Siegel theta series by means of a system of differential equations, in the manner of Vign\'{e}ras.)
\item The symmetry property we call $P$-stability is not the only way an indefinite theta function can degenerate to a holomorphic function of $\Omega$; there is also the case when $M=i\Omega$, the quadratic form $Q_M$ factors as a product of two linear forms, and the cone parameters are sent to the boundary of the cone of positivity. How do the associated indefinite zeta functions degenerate in this case? 
\item What can be learned by specialising indefinite zeta functions at integer values of $s$ besides $s=0$ and $s=1$?
\end{itemize}

\section{Definite theta functions}\label{sec:riemann}

In this expository section, we discuss some classical results on (definite) theta functions to provide context for the new results on indefinite theta functions proved in \Cref{sec:indeftheta}. Most of the results in this section may be found in \cite{langabelian}, \cite{milne}, or \cite{thetabook}. The expert may skip most of this section but will need to refer to back \Cref{sec:cansqrt} for conventions for square roots of determinants used in \Cref{sec:morecansqrt}.

Definite theta functions in arbitrary dimension were introduced by Riemann, building on Jacobi's earlier work in one dimension. The work of many mathematicians, including Hecke, Siegel, Schoenberg, and Mumford, further developed the theory of theta functions (including their relationship to zeta functions) and contributed ideas and perspectives used in this exposition.

The definite theta function---or Riemann theta function---of dimension (or genus) $g$ is a function of an elliptic parameter $z$ and a modular parameter $\Omega$.  The elliptic parameter $z$ lives in $\C^g$, but may (almost) be treated as an element of a complex torus $\C^g/\Lambda$, which happens to be an abelian variety.  The parameter $\Omega$ is written as a complex $g \times g$ matrix and lives in the Siegel upper half-space $\HH_g$, whose definition imposes a condition on $M=\im(\Omega)$.

\subsection{Definitions and geometric context}

An \textit{abelian variety} over a field $K$ is a connected projective algebraic group; it follows from this definition that the group law is abelian.  (See \cite{milne} as a reference for all results mentioned in this discussion.)  A \textit{principal polarization} on an abelian variety $A$ is an isomorphism between $A$ and the dual abelian variety $A^\vee$.  Over $K=\C$, every principally polarised abelian variety of dimension $g$ is a complex torus of the form $A(\C) = \C^g/(\Z^g+\Omega\Z^g)$, where $\Omega$ is in the \textit{Siegel upper half-space} (sometimes called the Siegel upper half-plane, although it is a complex manifold of dimension $\frac{g(g+1)}{2})$.
\begin{defn} 
The \textit{Siegel upper half-space} of genus $g$ is defined to be the following open subset of the space $\M_g(\C)$ of symmetric $g \times g$ complex matrices.
\begin{equation}
\HH_g^{(0)} := \HH_g := \{\Omega \in \M_g(\C) : \Omega = \Omega^\top \mbox{ and } \im(\Omega) \mbox{ is positive definite}\}.
\end{equation}
When $g=1$, we recover the usual upper half-plane $\HH_1 = \HH = \{\tau \in \C : \im(\t)>0\}$.
\end{defn}
\begin{defn}
The \textit{definite (Riemann) theta function} is, for $z \in \C^g$ and $\Omega \in \HH_g$,
\begin{equation}
\Theta(z;\Omega) := \sum_{n \in \Z^g} e\left(\foh n^\top \Omega n + n^\top z\right). \label{eq:defdef}
\end{equation}
\end{defn}
\begin{defn}
When $g=1$, the definite theta function is called a \textit{Jacobi theta function} and is denoted by
$
\vartheta(z,\tau) := \Theta([z];[\tau])
$
for $z \in \C$ and $\tau \in \HH$.
\end{defn}
The complex structure on $A(\C)$ determines the algebraic structure on $A$ over $\C$; indeed, the map $A \mapsto A(\C)$ defines an equivalence of categories from the category of abelian varieties over $\C$ to the category of polarizable tori (see Theorem 2.9 in \cite{milne}).
Concretely, theta functions realize the algebraic structure from the analytic. The functions $\Theta(z+t;\Omega)$ for representatives $t \in \C^g$ of $2$-torsion points of $A(\C)$ may be used to define an explicit holomorphic embedding of $A$ as an algebraic locus in complex projective space.  These shifts $t$ are called \textit{characteristics}.  More details may be found in Chapter VI of \cite{langabelian}, in particular pages 104--108.

The positive integer $g$ is sometimes called the ``genus'' because the Jacobian $\Jac(C)$ of an algebraic curve of genus $g$ is a principally polarised abelian variety of dimension $g$.  Not all principally polarized abelian varieties are Jacobians of curves; the question of characterizing the locus of Jacobians of curves inside the moduli space of all principally polarised abelian varieties is known as the \textit{Schottky problem}.

\subsection{The modular parameter and the symplectic group action}

The Siegel upper half-space has a natural action of the real symplectic group. This group, and an important discrete subgroup, are defined as follows.
\begin{defn}
The \textit{real symplectic group} is defined as the set of $2g \times 2g$ real matrices preserving a standard symplectic form.
\begin{equation}
\Sp_{2g}(\R) := \left\{G \in \GL_{2g}(\R) : G^\top \mattwo{0}{-I}{I}{0} G = \mattwo{0}{-I}{I}{0}\right\},
\end{equation}
where $I$ is the $g \times g$ identity matrix. The integer symplectic group is defined by $\Sp_{2g}(\Z) := \Sp_{2g}(\R) \cap \GL_{2g}(\Z)$.
\end{defn}
The real symplectic group acts on the Siegel upper half-space by the \textit{fractional linear transformation action}
\begin{equation}\label{eq:flin}
\mattwo{A}{B}{C}{D} \cdot \Omega := (A\Omega + B)(C\Omega + D)^{-1} \mbox{ for } \mattwo{A}{B}{C}{D} \in \Sp_g(\R).
\end{equation}
We will show in \Cref{prop:S} (specifically, by the case $k=0$) that $\HH_g$ is closed under this action.

\subsection{A canonical square root}\label{sec:cansqrt}

On the Siegel upper half-space $\HH_g$, $\det(-i\Omega)$ has a canonical square root.
\begin{lem}\label{lem:intsq}
Let $\Omega \in \HH_g$.  Then
\begin{equation}\label{eq:intsq}
\left(\int_{x \in \R^g} e\left(\foh x^\top \Omega x\right)\,dx\right)^2 = \frac{1}{\det(-i \Omega)}.
\end{equation}
\end{lem}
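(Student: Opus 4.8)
The plan is to reduce the $g$-dimensional Gaussian integral to a product of one-dimensional Gaussian integrals and then quote the classical one-dimensional formula in a form that already involves a square, so that no choice of branch of a square root enters the argument. Write $\Omega = iM + N$ with $M = \im(\Omega)$ positive definite and $N$ real symmetric. First I would note that the integral converges absolutely and defines a complex number $g(\Omega) := \int_{\R^g} e\bigl(\tfrac{1}{2} x^\top \Omega x\bigr)\,dx$: since $\bigl|e\bigl(\tfrac{1}{2} x^\top \Omega x\bigr)\bigr| = e^{-\pi x^\top M x} \le e^{-\pi\lambda|x|^2}$, where $\lambda>0$ is the least eigenvalue of $M$, the integrand is dominated by a Gaussian.

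The key step is a substitution that decouples the variables. Let $M^{1/2}$ be the positive definite symmetric square root of $M$ and put $x = M^{-1/2}y$; then $\tfrac{1}{2} x^\top\Omega x = \tfrac{1}{2} y^\top(iI + \widetilde N)y$ with $\widetilde N := M^{-1/2} N M^{-1/2}$ real symmetric, and $dx = (\det M)^{-1/2}\,dy$. Now diagonalize $\widetilde N = O\,\operatorname{diag}(\mu_1,\dots,\mu_g)\,O^\top$ with $O$ real orthogonal and $\mu_j \in \R$; the further substitution $y = Oz$ fixes $iI$, so the integrand becomes $\prod_{j=1}^g e\bigl(\tfrac{1}{2}(\mu_j + i)z_j^2\bigr)$ and Fubini yields
\[
g(\Omega) = (\det M)^{-1/2}\prod_{j=1}^g \int_{\R} e\bigl(\tfrac{1}{2}(\mu_j + i)x^2\bigr)\,dx .
\]
Each $\mu_j + i$ lies in $\HH$, so the problem is reduced to the one-dimensional case. (This is cleaner than attempting analytic continuation directly in $\Omega$, where the several-variable identity theorem would need the slightly delicate remark that a holomorphic function on the connected domain $\HH_g$ vanishing on the totally real submanifold $\{iY : Y \text{ positive definite}\}$ vanishes identically.)

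For the one-dimensional identity I would show $\bigl(\int_{\R} e\bigl(\tfrac{1}{2}\tau x^2\bigr)\,dx\bigr)^2 = \tfrac{1}{-i\tau}$ for $\tau \in \HH$ by analytic continuation: both sides are holomorphic in $\tau$ on the connected domain $\HH$ (holomorphy of the integral being justified by the locally uniform domination above, via Morera or differentiation under the integral sign), and they agree on the ray $\{\,it : t>0\,\}$, where the left side is the square of the classical Gaussian integral $\int_{\R} e^{-\pi t x^2}\,dx = t^{-1/2}$; since that ray has limit points in $\HH$, the identity theorem gives equality throughout $\HH$.

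Finally, squaring the displayed product and applying this identity gives $g(\Omega)^2 = (\det M)^{-1}\prod_{j}(1-i\mu_j)^{-1} = (\det M)^{-1}\det(I - i\widetilde N)^{-1}$, where $\prod_{j}(1-i\mu_j) = \det(I - i\widetilde N)$ by orthogonal invariance of the determinant; and since $I - i\widetilde N = M^{-1/2}(-i\Omega)M^{-1/2}$ one has $\det(I - i\widetilde N) = (\det M)^{-1}\det(-i\Omega)$, whence $g(\Omega)^2 = \det(-i\Omega)^{-1}$, which is the claim. The only genuinely analytic input is the one-dimensional analytic-continuation step; everything else is linear algebra and Fubini. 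The point to get right is that the symmetric-square-root substitution is exactly what turns the ``mixed'' matrix $\Omega$ into one whose imaginary part is a scalar matrix — hence invisible to an orthogonal change of variables — while its real part can then be diagonalized, and that each substitution preserves absolute convergence so that Fubini applies at every stage. Because only the \emph{square} of the integral is asserted, no branch of $\sqrt{-i\Omega}$ is needed here; that choice is the separate content of \Cref{sec:cansqrt}.
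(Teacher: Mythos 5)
Your proof is correct, and it takes a genuinely different route from the paper's. The paper first establishes \cref{eq:intsq} for purely imaginary $\Omega$ (diagonalizing and reducing to the classical one-dimensional Gaussian integral) and then analytically continues in the $\frac{g(g+1)}{2}$ complex entries of $\Omega$, which requires the several-variable identity theorem: a holomorphic function on the connected domain $\HH_g$ that vanishes on the totally real locus of purely imaginary points vanishes identically. You instead treat an arbitrary $\Omega = iM+N \in \HH_g$ by linear algebra alone --- the substitution $x = M^{-1/2}y$ followed by an orthogonal diagonalization of $M^{-1/2}NM^{-1/2}$ decouples the integral into one-dimensional factors with parameters $\mu_j + i \in \HH$ --- so the only analytic input is the one-variable identity $\bigl(\int_{\R} e\bigl(\foh \tau x^2\bigr)\,dx\bigr)^2 = (-i\tau)^{-1}$ on $\HH$, continued from the ray $\tau = it$. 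What your approach buys is that it confines analytic continuation to a single variable and makes the domination/Fubini justifications explicit; the cost is the extra determinant bookkeeping, which you carry out correctly: $-i(\mu_j+i) = 1-i\mu_j$, $\prod_j(1-i\mu_j)=\det(I-iM^{-1/2}NM^{-1/2})=(\det M)^{-1}\det(-i\Omega)$, and the factors of $\det M$ cancel. Both arguments are sound; yours is somewhat longer but more self-contained.
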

\begin{proof}
\Cref{eq:intsq} holds for $\Omega$ diagonal and purely imaginary by reduction to the one-dimensional case $\int_{-\infty}^\infty e^{-\pi a x^2}\,dx = \frac{1}{\sqrt{a}}$.  Consequently, \cref{eq:intsq} holds for any purely imaginary $\Omega$ by a change of basis, using spectral decomposition.

Consider the two sides of \cref{eq:intsq} as holomorphic functions in $\frac{g(g+1)}{2}$ complex variables (the entries of $\Omega$); they agree whenever those $\frac{g(g+1)}{2}$ variables are real.  Because they are holomorphic, it follows by analytic continuation that they agree everywhere.
\end{proof}

\begin{defn}\label{defn:intsq}
\Cref{lem:intsq} provides a canonical square root of $\det(-i\Omega)$:
\begin{align}
\sqrt{\det(-i\Omega)} := \left(\int_{x \in \R^g} e\left(\foh x^\top \Omega x\right)\,dx\right)^{-1}.
\end{align}
Whenever we write ``$\sqrt{\det(-i\Omega)}$'' for $\Omega \in \HH_g$, we will be referring to this square root.
\end{defn}

We will later need to use this square root to evaluate a shifted version of the integral that defines it.
\begin{cor}\label{cor:intsq2}
Let $\Omega \in \HH_g$ and $c \in \C^g$.  Then,
\begin{equation}\label{eq:intsq2}
\int_{x \in \R^g} e\left(\foh (x+c)^\top \Omega (x+c)\right)\,dx = \frac{1}{\sqrt{\det(-i \Omega)}}.
\end{equation}
\end{cor}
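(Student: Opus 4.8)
The plan is to deduce \cref{eq:intsq2} from \Cref{defn:intsq} by analytic continuation in the vector parameter $c$. The crucial point is that the right-hand side of \cref{eq:intsq2} is independent of $c$, so it is enough to prove two things: that the left-hand side extends to a holomorphic function of $c \in \C^g$, and that it takes the value $\frac{1}{\sqrt{\det(-i\Omega)}}$ whenever $c$ is real. The second of these is immediate: for $c \in \R^g$, the substitution $x \mapsto x - c$ together with the translation-invariance of Lebesgue measure on $\R^g$ turns the integral into $\int_{x \in \R^g} e\left(\foh x^\top \Omega x\right)\,dx$, which is $\frac{1}{\sqrt{\det(-i\Omega)}}$ by definition.

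For the holomorphy, I would write $\Omega = iM + N$ with $M = \im(\Omega)$ positive definite and $c = a + ib$ with $a,b \in \R^g$, substitute $y = x + a$, and expand the quadratic exponent to obtain
\[
\left|e\left(\foh (x+c)^\top \Omega (x+c)\right)\right| = \exp\left(-\pi\left(y^\top M y + 2\, y^\top N b - b^\top M b\right)\right).
\]
Because $M$ is positive definite, the quadratic term dominates the linear one, so this expression is integrable in $y$, with the bound locally uniform in $c$; since the integrand is entire in $c$ for each fixed $x$, it follows by a standard argument (Morera's theorem together with Fubini's theorem, applied to one complex coordinate of $c$ at a time, or equivalently differentiation under the integral sign) that $F(c) := \int_{x \in \R^g} e\left(\foh (x+c)^\top \Omega (x+c)\right)\,dx$ is holomorphic on $\C^g$.

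It then only remains to observe that $F$ and the constant function $c \mapsto \frac{1}{\sqrt{\det(-i\Omega)}}$ are holomorphic on $\C^g$ and agree on $\R^g$, hence agree everywhere by the one-dimensional identity theorem applied in each complex coordinate in turn. (An alternative, slightly more hands-on route is to shift the contour of integration from $\R^g$ to $\R^g - i\,\im(c)$ using Cauchy's theorem---legitimate because the integrand is entire in $x$ and Gaussian-decaying---thereby reducing \cref{eq:intsq2} to \cref{eq:intsq} after a real translation.) I expect the only genuine obstacle to be the integrability estimate displayed above, which underlies both the holomorphy claim and the contour shift; the rest of the argument is formal.
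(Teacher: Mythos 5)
Your proposal is correct and follows essentially the same route as the paper: the paper's proof likewise observes that the integral is constant for $c \in \R^g$ (by translation invariance and \Cref{lem:intsq}) and is holomorphic in $c$, hence constant on all of $\C^g$. You simply supply the integrability estimate justifying holomorphy, which the paper leaves implicit; your expansion of the modulus of the integrand is correct.
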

\begin{proof}
Fix $\Omega$.  The left-hand side of \cref{eq:intsq2} is constant for $c \in \R^g$, by \Cref{lem:intsq}.  Because the left-hand side is holomorphic in $c$, it is in fact constant for all $c \in \C^g$.
\end{proof}

Note that, if $\Omega \in \HH_g$, then $\Omega$ is invertible and $-\Omega^{-1} \in \HH_g$.  
The latter is true because $-\Omega^{-1} = \mattwo{0}{-I}{I}{0} \cdot \Omega$, where ``$\cdot$'' is the fractional linear transformation action of $\Sp_{2g}(\R)$ on $\HH_g$ defined by \cref{eq:flin}.

The behavior of our canonical square root under the modular transformation $\Omega \mapsto -\Omega^{-1}$ is given by the following proposition.
\begin{prop}
If $\Omega \in \HH_g$, then
$\sqrt{\det(-i\Omega)}\sqrt{\det(i\Omega^{-1})} = 1$.
\end{prop}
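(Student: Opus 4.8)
The plan is to show that $\Omega \mapsto \sqrt{\det(-i\Omega)}\,\sqrt{\det(i\Omega^{-1})}$ is a continuous function on $\HH_g$ taking values in $\{+1,-1\}$, and then to pin down the constant value by evaluating at a single point. First, squaring and using $\det(c\Lambda) = c^g\det\Lambda$, one gets $\det(-i\Omega)\det(i\Omega^{-1}) = (-i)^g\det\Omega \cdot i^g (\det\Omega)^{-1} = \big((-i)i\big)^g = 1$, so for each $\Omega \in \HH_g$ the product of the two canonical square roots is $\pm 1$; the whole content is to rule out the minus sign.

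Next I would check continuity. By \Cref{lem:intsq} and \Cref{defn:intsq}, $\sqrt{\det(-i\Omega)} = \big(\int_{\R^g} e(\foh x^\top\Omega x)\,dx\big)^{-1}$; the integral converges and is holomorphic in $\Omega$ on $\HH_g$ (the integrand is dominated by the Gaussian $\exp(-\pi x^\top M x)$ with $M = \im(\Omega)$ positive definite, uniformly on compact subsets of $\HH_g$, so one may differentiate under the integral sign), and it is nonvanishing since its square is $1/\det(-i\Omega) \neq 0$. Hence $\Omega \mapsto \sqrt{\det(-i\Omega)}$ is holomorphic on $\HH_g$. Because $\Omega \mapsto -\Omega^{-1}$ is a holomorphic self-map of $\HH_g$ (as noted just before this proposition) and $\sqrt{\det(i\Omega^{-1})} = \sqrt{\det\big(-i(-\Omega^{-1})\big)}$, the second factor is holomorphic as well, and the product is a continuous map $\HH_g \to \{+1,-1\}$.

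Now $\HH_g$ is connected: if $\Omega_0,\Omega_1 \in \HH_g$ then the segment $t\Omega_1 + (1-t)\Omega_0$ stays symmetric, and its imaginary part $t\,\im(\Omega_1)+(1-t)\,\im(\Omega_0)$ is a convex combination of positive definite matrices, hence positive definite; so $\HH_g$ is in fact convex. A continuous $\{+1,-1\}$-valued function on a connected set is constant, so it suffices to evaluate the product at one point. Taking $\Omega = iI$: here $-i\Omega = I$ and $\int_{\R^g} e(\foh x^\top(iI)x)\,dx = \int_{\R^g} e^{-\pi x^\top x}\,dx = 1$, so $\sqrt{\det(-i\Omega)} = 1$; moreover $-\Omega^{-1} = iI$ as well, so the same computation gives $\sqrt{\det(i\Omega^{-1})} = 1$, and the product is $1$ there. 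Therefore the product is identically $1$ on $\HH_g$.

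There is no serious obstacle here; the only points requiring (routine) care are the uniform-on-compacta convergence needed to see that the Gaussian integral is holomorphic in $\Omega$, and the remark that $\HH_g$ is connected, so that the locally constant sign $\pm1$ cannot jump between the chosen base point and a general $\Omega$.
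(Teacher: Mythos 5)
Your proof is correct and follows essentially the same route as the paper: the product is a continuous $\{\pm 1\}$-valued function on the connected set $\HH_g$, hence constant, and evaluation at $\Omega = iI$ gives $1$. You fill in more detail (the squaring identity, holomorphy of the Gaussian integral, convexity of $\HH_g$) than the paper's one-line argument, but the idea is identical.
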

\begin{proof}
This follows from \Cref{defn:intsq} by plugging in $\Omega = iI$, because the function given by $\Omega \mapsto \sqrt{\det(-i\Omega)}\sqrt{\det(i\Omega^{-1})}$ is continuous and takes values in $\{\pm 1\}$, and $\HH_g$ is connected.
\end{proof}

\subsection{Transformation laws of definite theta functions}

\begin{prop}\label{prop:zdef}
The definite theta function for $z \in \C^g$ and $\Omega \in \HH_g$ satisfies the following transformation law with respect to the $z$ variable, for $a+\Omega b \in \Z^g + \Omega\Z^g$:
\begin{equation}
\Theta(z+a+\Omega b;\Omega) = e\left(-\foh b^\top \Omega b - b^\top z\right)\Theta(z;\Omega).
\end{equation}
\end{prop}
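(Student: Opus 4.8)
The plan is to prove the quasi-periodicity law by a direct manipulation of the defining series \cref{eq:defdef}. Since $a+\Omega b \in \Z^g + \Omega\Z^g$ means $a,b \in \Z^g$, I would start by substituting $z \mapsto z + a + \Omega b$ into the definition, obtaining a sum of $e\!\left(\foh n^\top\Omega n + n^\top z + n^\top a + n^\top\Omega b\right)$ over $n \in \Z^g$. The term $e(n^\top a)$ is trivial because $n^\top a \in \Z$, so this contribution drops out immediately. The remaining new term, $n^\top\Omega b$, is then absorbed by completing the square: using that $\Omega = \Omega^\top$, one has $\foh n^\top\Omega n + n^\top\Omega b = \foh (n+b)^\top\Omega(n+b) - \foh b^\top\Omega b$, and similarly $n^\top z = (n+b)^\top z - b^\top z$. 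Pulling the factors $e(-\foh b^\top\Omega b)$ and $e(-b^\top z)$, which do not depend on $n$, outside the sum leaves $\sum_{n \in \Z^g} e\!\left(\foh (n+b)^\top\Omega(n+b) + (n+b)^\top z\right)$.

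The final step is the reindexing $m = n+b$: because $b \in \Z^g$, the map $n \mapsto n+b$ is a bijection of $\Z^g$, so the sum over $n$ equals $\sum_{m \in \Z^g} e\!\left(\foh m^\top\Omega m + m^\top z\right) = \Theta(z;\Omega)$, yielding the claimed identity. To make the rearrangement and reindexing rigorous I would note that the series for $\Theta(z;\Omega)$ converges absolutely and uniformly on compact subsets of $\C^g \times \HH_g$: since $\im(\Omega)$ is positive definite, $\left|e\!\left(\foh n^\top\Omega n + n^\top z\right)\right| = e^{-\pi\, n^\top \im(\Omega)\, n - 2\pi\, n^\top\im(z)}$ decays faster than any geometric series in $\|n\|$, so all the formal steps above (extracting a trivial exponential, grouping, shifting the index) are legitimate manipulations of an absolutely convergent sum.

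There is no real obstacle here; the only points requiring a modicum of care are the use of the symmetry $\Omega = \Omega^\top$ in the completion of the square (without it the cross term would be $\foh(n^\top\Omega b + b^\top\Omega n)$ rather than $n^\top\Omega b$) and the absolute-convergence remark justifying the reindexing. I would present the argument as the short chain of equalities sketched above, inserting a one-line justification of convergence at the start.
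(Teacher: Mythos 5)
Your proof is correct and is exactly the ``straightforward calculation'' that the paper alludes to (it defers the details to Theorem 4 of the cited reference rather than writing them out): substitute, drop $e(n^\top a)$, complete the square using $\Omega=\Omega^\top$, and reindex $n\mapsto n+b$, with absolute convergence guaranteed by the positive definiteness of $\im(\Omega)$. No issues.
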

\begin{proof}
The proof is a straightforward calculation.  It may be found (using slightly different notation) as Theorem 4 on page 8--9 of \cite{thetabook}.
\end{proof}

\begin{thm}\label{thm:omegadef}
The definite theta function for $z \in \C^g$ and $\Omega \in \HH_g$ satisfies the following transformation laws with respect to the $\Omega$ variable, where $A \in \GL_g(\Z)$, $B \in \M_g(\Z)$, $B=B^\top$:
\begin{enumerate}
\item[(1)]
$\Theta(z;A^\top\Omega A) = \Theta(A^{-\top}z;\Omega).$
\item[(2)]
$\Theta(z;\Omega+2B) = \Theta(z;\Omega).$
\item[(3)]
$\Theta(z;-\Omega^{-1}) = \frac{e\left(\foh z^\top \Omega z\right)}{\sqrt{\det(i\Omega^{-1})}}\Theta(\Omega z;\Omega).$
\end{enumerate}
\end{thm}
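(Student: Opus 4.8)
The plan is to handle the three laws separately: (1) and (2) are termwise identities for the defining series \cref{eq:defdef} after an elementary reindexing, whereas (3) is the genus-$g$ Jacobi inversion formula and requires Poisson summation, with the multiplicative constant pinned down by the canonical square root of \Cref{sec:cansqrt}.

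For (1), I would substitute $m = An$ in \cref{eq:defdef}: since $A \in \GL_g(\Z)$ the map $n \mapsto An$ is a bijection of $\Z^g$, and $n^\top(A^\top\Omega A)n = m^\top\Omega m$ while $n^\top z = (A^{-1}m)^\top z = m^\top A^{-\top}z$, so the series for $\Theta(z;A^\top\Omega A)$ becomes the series for $\Theta(A^{-\top}z;\Omega)$ term by term. For (2), I would write $\foh n^\top(\Omega+2B)n = \foh n^\top\Omega n + n^\top Bn$ and observe that $n^\top Bn \in \Z$ for all $n \in \Z^g$ because $B$ is an integral symmetric matrix, so $e(n^\top Bn)=1$ and the two series agree termwise. (Both statements are classical and also appear in the references listed at the start of this section.)

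The substance is in (3). Set $\Xi := -\Omega^{-1}$, which again lies in $\HH_g$, and let $f(x) := e\!\left(\foh x^\top\Xi x + x^\top z\right)$; since $\im(\Xi)$ is positive definite, $f$ is a Schwartz function on $\R^g$ (indeed $|f|$ has Gaussian decay) and so is its Fourier transform $\hat f(\xi) := \int_{\R^g} f(x)\,e(-x^\top\xi)\,dx$, so Poisson summation gives $\Theta(z;\Xi) = \sum_{n\in\Z^g} f(n) = \sum_{m\in\Z^g}\hat f(m)$. To compute $\hat f$, I would complete the square, using $\Xi^{-1} = -\Omega$,
\[
\foh x^\top\Xi x + x^\top(z-\xi) = \foh\bigl(x-\Omega(z-\xi)\bigr)^\top\Xi\bigl(x-\Omega(z-\xi)\bigr) + \foh(z-\xi)^\top\Omega(z-\xi),
\]
and then apply \Cref{cor:intsq2} with the matrix $\Xi \in \HH_g$ and the complex shift $c = -\Omega(z-\xi)$, giving $\hat f(\xi) = e\!\left(\foh(z-\xi)^\top\Omega(z-\xi)\right)\big/\sqrt{\det(-i\Xi)}$; since $-i\Xi = i\Omega^{-1}$ the denominator is exactly the canonical $\sqrt{\det(i\Omega^{-1})}$ of \Cref{defn:intsq}. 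Finally I would expand $\foh(z-m)^\top\Omega(z-m) = \foh z^\top\Omega z - m^\top\Omega z + \foh m^\top\Omega m$ (using $\Omega = \Omega^\top$), pull the constant $e\!\left(\foh z^\top\Omega z\right)$ out of the sum over $m$, and substitute $m \mapsto -m$ to recognize the remaining sum as $\Theta(\Omega z;\Omega)$; this is exactly (3).

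I expect the only real obstacle in (3) to be bookkeeping: one must check that the Gaussian decay coming from positive-definiteness of $\im(\Xi)$ genuinely licenses the Poisson summation step (absolute convergence on both sides and validity of the interchange) and, more delicately, that no sign ambiguity enters the square root. The latter is exactly what \Cref{cor:intsq2} is for: it is stated so that the shifted complex Gaussian integral equals $1/\sqrt{\det(-i\Xi)}$ with the canonical branch and an arbitrary complex $c$, so once it is invoked there is nothing left to verify beyond algebra with symmetric matrices and reindexing of lattice sums, exactly as in (1) and (2).
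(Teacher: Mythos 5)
Your proposal is correct and follows essentially the same route as the paper: (1) and (2) by termwise reindexing, and (3) by Poisson summation after completing the square, with the branch of the square root fixed by the canonical choice from \Cref{sec:cansqrt} (the paper applies Poisson to $\Theta(z;\Omega)$ and substitutes $\Omega\mapsto-\Omega^{-1}$ at the end, while you work with $-\Omega^{-1}$ from the start, which is only a relabeling). Your explicit appeal to \Cref{cor:intsq2} for the complex shift is if anything slightly more careful than the paper's citation of \Cref{lem:intsq} alone.
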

\begin{proof}
The proof of (1) and (2) is a straightforward calculation.  A more powerful version of this theorem, combining (1)--(3) into a single transformation law, appears as Theorem A on pages 86--87 of \cite{thetabook}.

To prove (3), we apply the Poisson summation formula directly to the theta series.  The Fourier transforms of the terms are given as follows.
\begin{align}
&\int_{\R^g} e\left(Q_\Omega(n)+n^\top z\right)e\left(-n^\top \nu\right)\,dn \nn \\
&= \int_{\R^g} e\left(Q_\Omega(n)+n^\top (z-\nu)\right)\,dn \\
&= e\left(-Q_{-\Omega^{-1}}(z-\nu)\right)\int_{\R^g} e\left(Q_\Omega\left(n+\Omega^{-1}(z-\nu)\right)\right)\,dn \\
&= \frac{e\left(-Q_{-\Omega^{-1}}(z-\nu)\right)}{\sqrt{\det(-i\Omega)}}.
\end{align}
In the last line, we used \Cref{lem:intsq} and \Cref{defn:intsq}.  Now, by the Poisson summation formula,
\begin{align}
\Theta(z;\Omega) &= \sum_{\nu \in \Z^g} \frac{e\left(-Q_{-\Omega^{-1}}(z-\nu)\right)}{\sqrt{\det(-i\Omega)}} \\
&= \frac{e\left(Q_{-\Omega^{-1}}(z)\right)}{\sqrt{\det(-i\Omega)}}\sum_{\nu \in \Z^g} e\left(Q_{-\Omega^{-1}}(\nu)+\nu^\top\Omega^{-1}z\right) \\
&= \frac{e\left(Q_{-\Omega^{-1}}(z)\right)}{\sqrt{\det(-i\Omega)}}\sum_{\nu \in \Z^g} e\left(Q_{-\Omega^{-1}}(\nu)-\nu^\top\Omega^{-1}z\right)  \mbox{ (sending $\nu \mapsto -\nu$)}\\
&= \frac{e\left(-\foh z^\top \Omega^{-1} z\right)}{\sqrt{\det(-i\Omega)}} \Theta\left(-\Omega^{-1}z, -\Omega^{-1}\right).
\end{align}
If $\Omega$ is replaced by $-\Omega^{-1}$, we obtain (3).
\end{proof}

As was mentioned, it is possible to combine all of the modular transformations into a single theorem describing the transformation of $\Omega$ under the action of $\Sp_{2g}(\Z)$,
\begin{equation}
\mattwo{A}{B}{C}{D}\cdot\Omega = (A\Omega+B)(C\Omega+D)^{-1}.
\end{equation}
This rule is already fairly complicated in dimension $g=1$, where the tranformation law involves Dedekind sums.  The general case is done in Chapter III of \cite{thetabook}, with the main theorems stated on pages 86--90.

\subsection{Definite theta functions with characteristics}

There is another notation for theta functions, using ``characteristics,'' and it will be necessary to state the transformation laws using this notation as well.  We replace $z$ with $z = p+\Omega q$ for real variables $p,q \in \R^g$.  The reader is cautioned that the literature on theta functions contains conflicting conventions, and some authors may use notation identical to this one to mean something slightly different.
\begin{defn}
Define the \textit{definite theta null with real characteristics $p,q \in \R^g$}, for $\Omega \in \HH_g$:
\begin{align}
\Theta_{p,q}(\Omega) &:= e\left(\foh q^\top\Omega q + p^\top q\right) \Theta\left(p+\Omega q; \Omega\right).
\end{align}
\end{defn}

The transformation laws for $\Theta_{p,q}(\Omega)$ follow directly from those for $\Theta(z;\Omega)$.

\begin{prop}
Let $\Omega \in \HH_g$ and $p,q \in \R^g$.  The elliptic transformation law for the definite theta null with real characteristics is given by
\begin{equation}
\Theta_{p+a,q+b}(\Omega) = e\left(a^\top(q+b)\right)\Theta_{p,q}(\Omega).
\end{equation}
for $a, b \in \Z^g$.
\end{prop}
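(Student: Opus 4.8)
The plan is to deduce the elliptic law for $\Theta_{p,q}$ directly from its definition together with the elliptic transformation law for the unnormalised theta function $\Theta(z;\Omega)$ established in \Cref{prop:zdef}; there is no analytic input needed, only bookkeeping with exponential factors.

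First I would unwind the definition to write
\begin{equation}
\Theta_{p+a,q+b}(\Omega) = e\left(\foh (q+b)^\top\Omega (q+b) + (p+a)^\top(q+b)\right)\Theta\big((p+a)+\Omega(q+b);\Omega\big).
\end{equation}
The key observation is that $(p+a)+\Omega(q+b) = (p+\Omega q) + (a+\Omega b)$, and $a+\Omega b \in \Z^g+\Omega\Z^g$ since $a,b\in\Z^g$. Applying \Cref{prop:zdef} with shift $a+\Omega b$ and base point $z = p+\Omega q$ replaces $\Theta\big((p+\Omega q)+(a+\Omega b);\Omega\big)$ by $e\left(-\foh b^\top\Omega b - b^\top(p+\Omega q)\right)\Theta(p+\Omega q;\Omega)$.

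It then remains to compare the accumulated exponential factor with the one appearing in $e\left(a^\top(q+b)\right)\Theta_{p,q}(\Omega) = e\left(a^\top(q+b) + \foh q^\top\Omega q + p^\top q\right)\Theta(p+\Omega q;\Omega)$. Expanding $\foh(q+b)^\top\Omega(q+b)$ and using $\Omega = \Omega^\top$ to identify $q^\top\Omega b = b^\top\Omega q$, the terms $\foh b^\top\Omega b$ and $b^\top\Omega q$ cancel against $-\foh b^\top\Omega b$ and $-b^\top\Omega q$; and in the remaining bilinear part the contribution $p^\top b$ cancels $-b^\top p$, leaving exactly $\foh q^\top\Omega q + p^\top q + a^\top(q+b)$. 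Since the two exponents agree identically (not merely modulo $\Z$), the claimed identity follows.

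The only possible pitfall is a sign or transpose slip in the symmetry step $q^\top\Omega b = b^\top\Omega q$, or forgetting that $b^\top z$ in \Cref{prop:zdef} expands as $b^\top p + b^\top\Omega q$; beyond that the computation is routine and there is no substantive obstacle.
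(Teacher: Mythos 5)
Your computation is correct and follows exactly the route the paper indicates (the paper simply states that the laws for $\Theta_{p,q}$ follow directly from those for $\Theta(z;\Omega)$, omitting the bookkeeping you carry out). The expansion and cancellation of the exponential factors, using the symmetry $\Omega=\Omega^\top$, checks out, so there is nothing to add.
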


\begin{prop}
Let $\Omega \in \HH_g$ and $p,q \in \R^g$.  The modular transformation laws for the definite theta null with real characteristics are given as follows, where $A \in \GL_g(\Z)$, $B \in \M_g(\Z)$, and $B=B^\top$.
\begin{itemize}
\item[(1)] $\Theta_{p,q}(A^\top \Omega A) = \Theta_{A^{-\top}p,Aq}(\Omega)$.
\item[(2)] $\Theta_{p,q}(\Omega+2B) = e(-q^\top Bq)\Theta_{p+2Bq,q}(\Omega)$.
\item[(3)] $\Theta_{p,q}\left(-\Omega^{-1}\right) = \frac{e\left(p^\top q\right)}{\sqrt{\det(i\Omega^{-1})}} \Theta_{-q,p}(\Omega)$.
\end{itemize}
\end{prop}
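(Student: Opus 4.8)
The plan is to deduce all three identities from the corresponding parts of \Cref{thm:omegadef} by simply unfolding the definition
$\Theta_{p,q}(\Omega) = e\left(\foh q^\top\Omega q + p^\top q\right)\Theta(p+\Omega q;\Omega)$
and keeping careful track of the scalar exponential prefactors. In each case the strategy is identical: rewrite the argument $p + (\text{new }\Omega)\,q$ of the inner $\Theta$ so that the transformation law from \Cref{thm:omegadef} can be applied, then compare the resulting prefactor against the one coming from the definition of the theta null on the right-hand side.

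For (1), apply \Cref{thm:omegadef}(1) to $\Theta(p + A^\top\Omega A\,q;\,A^\top\Omega A)$. Since $A^{-\top}\!\left(p + A^\top\Omega A\,q\right) = A^{-\top}p + \Omega(Aq)$, the inner theta becomes $\Theta\!\left(A^{-\top}p + \Omega(Aq);\Omega\right)$, i.e.\ the inner theta of $\Theta_{A^{-\top}p,\,Aq}(\Omega)$. It remains to check that the prefactors agree, which follows from $q^\top(A^\top\Omega A)q = (Aq)^\top\Omega(Aq)$ and $p^\top q = (A^{-\top}p)^\top(Aq)$; no extra factor appears. For (2), apply \Cref{thm:omegadef}(2) and write $p + (\Omega+2B)q = (p+2Bq) + \Omega q$, so the inner theta is exactly the inner theta of $\Theta_{p+2Bq,\,q}(\Omega)$; comparing the two prefactors $e\left(\foh q^\top(\Omega+2B)q + p^\top q\right)$ and $e\left(\foh q^\top\Omega q + (p+2Bq)^\top q\right)$ yields precisely the discrepancy $e(-q^\top Bq)$.

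For (3), set $z = p - \Omega^{-1}q$ so that $\Theta_{p,q}(-\Omega^{-1})$ has inner theta $\Theta(z;-\Omega^{-1})$, and apply \Cref{thm:omegadef}(3). Using $\Omega = \Omega^\top$ one computes $\Omega z = \Omega p - q$ and $\foh z^\top\Omega z = \foh p^\top\Omega p - p^\top q + \foh q^\top\Omega^{-1}q$. Assembling the three exponential contributions (the prefactor $e\left(-\foh q^\top\Omega^{-1}q + p^\top q\right)$ from the definition, the factor $e\left(\foh z^\top\Omega z\right)$ from \Cref{thm:omegadef}(3), and the square-root factor $\det(i\Omega^{-1})^{-1/2}$), the $q^\top\Omega^{-1}q$ and $p^\top q$ terms telescope, leaving $\dfrac{e\left(\foh p^\top\Omega p\right)}{\sqrt{\det(i\Omega^{-1})}}\,\Theta(\Omega p - q;\Omega)$. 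Finally, recognizing $\Theta(\Omega p - q;\Omega)$ as the inner theta of $\Theta_{-q,p}(\Omega)$ up to the prefactor $e\left(\foh p^\top\Omega p - q^\top p\right)$ and cancelling gives $\dfrac{e(p^\top q)}{\sqrt{\det(i\Omega^{-1})}}\,\Theta_{-q,p}(\Omega)$, as claimed.

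The only real subtlety is the exponential bookkeeping in part (3)---ensuring that the quadratic-in-$\Omega$ and quadratic-in-$\Omega^{-1}$ terms cancel exactly---and making sure the square root $\sqrt{\det(i\Omega^{-1})}$ is the one fixed in \Cref{thm:omegadef}(3) (equivalently \Cref{defn:intsq}), so that it transports verbatim to the theta-null formulation; parts (1) and (2) are routine. No analytic input beyond \Cref{thm:omegadef} is needed, since everything here is a formal identity of convergent series valid for all $\Omega \in \HH_g$.
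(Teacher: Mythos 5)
Your proposal is correct and follows exactly the route the paper intends: the paper gives no explicit proof, stating only that these laws ``follow directly'' from the transformation laws for $\Theta(z;\Omega)$, and your computation carries out that deduction with the prefactor bookkeeping done correctly in all three parts (in particular the cancellation $-\foh q^\top\Omega^{-1}q + p^\top q + \foh z^\top\Omega z = \foh p^\top\Omega p$ and the final factor $e(p^\top q)$ in part (3) are right). Nothing further is needed.
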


\section{Indefinite theta functions}\label{sec:indeftheta}

If we allow $\im(\Omega)$ to be indefinite, the series expansion in \cref{eq:defdef} no longer converges anywhere.  We want to remedy this problem by inserting a variable coefficient into each term of the sum.  In Chapter 2 of his PhD thesis \cite{zwegers}, Sander Zwegers found---in the case when $\Omega$ is purely imaginary---a choice of coefficients that preserves the transformation properties of the theta function.  

The results of this section generalise Zwegers's work by replacing Zwegers's indefinite theta function $\vartheta_M^{c_1,c_2}(z,\t)$ by the indefinite theta function $\Theta^{c_1,c_2}[f](z;\Omega)$.  The function has been generalised in the following ways.
\begin{itemize}
\item Replacing $\tau M$ for $\t \in \HH$ and $M \in M_g(\R)$ real symmetric in of signature $(g-1,1)$ by $\Omega \in \HH_g^{(1)}$.  (Adds $\frac{g(g+1)}{2}-1$ real dimensions.)
\item Allowing $c_1,c_2$ to be complex.  (Adds $2g-2$ real dimensions.)
\item Allowing a test function $f(u)$, which must be specialised to $f(u)=1$ for all the modular transformation laws to hold.
\end{itemize}
One motivation for introducing a test function $f$ is to find transformation laws for a more general class of test functions (e.g., polynomials).  We may investigate the behaviour of test functions under modular transformations in future work. However, for the purpose of this paper, only the cases $u \mapsto \abs{u}^r$ will be relevant.

\subsection{The Siegel intermediate half-space}

\begin{defn}
If $M \in \GL_g(\R)$ and $M = M^\top$, the \textit{signature} of $M$ (or of the quadratic form $Q_M$) is a pair $(j,k)$, where $j$ is the number of positive eigenvalues of $M$, and $k$ is the number of negative eigenvalues (so $j+k=g$).
\end{defn}

\begin{defn}\label{defn:intermediate}
For $0 \leq k \leq g$, we define the \textit{Siegel intermediate half-space} of genus $g$ and index $k$ to be
\begin{equation}
\HH_g^{(k)} := \{\Omega \in \M_g(\C) : \Omega = \Omega^\top \mbox{ and } \im(\Omega) \mbox{ has signature } (g-k,k)\}.
\end{equation}
We call a complex torus of the form $T_\Omega := \C^g/(\Z^g+\Omega\Z^g)$ for $\Omega \in \HH_g^{(k)}$, $k \neq 0,g$, an \textit{intermediate torus}.
\end{defn}

Intermediate tori are usually \textit{not} algebraic varieties.  An example of intermediate tori in the literature are the intermediate Jacobians of Griffiths \cite{griffiths1, griffiths2, griffiths3}.  Intermediate Jacobians generalise Jacobians of curves, which are abelian varieties, but those defined by Griffiths are usually not algebraic.  (In contrast, the intermediate Jacobians defined by Weil \cite{weilinter} are algebraic.)

The symplectic group $\Sp_{2g}(\R)$ acts on the set of $g \times g$ complex symmetric matrices by the fractional linear transformation action,
\begin{equation}
\mattwo{A}{B}{C}{D}\cdot \Omega = (A\Omega+B)(C\Omega+D)^{-1}.
\end{equation}

\begin{prop}\label{prop:S}
If $\Omega \in \HH_g^{(k)}$ and $\mattwo{A}{B}{C}{D} \in \Sp_{2g}(\R)$, then $(A\Omega+B)(C\Omega+D)^{-1} \in \HH_g^{(k)}$.  Moreover, the $\HH_g^{(k)}$ are the open orbits of the $\Sp_{2g}(\R)$-action on the set of $g \times g$ complex symmetric matrices.
\end{prop}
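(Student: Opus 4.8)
The plan is to obtain the first assertion by a $\ast$-congruence argument generalising the classical genus-one computation $\im\!\big(\tfrac{a\tau+b}{c\tau+d}\big) = \im(\tau)/\abs{c\tau+d}^2$, and then to deduce the second assertion from it together with a transitivity argument and an elementary dimension count. One caveat must be flagged first: for $0<k<g$ the matrix $C\Omega+D$ need not be invertible — for instance $\Omega = \mattwo{i}{1}{1}{-i} \in \HH_2^{(1)}$ is singular, so $\mattwo{0}{-I}{I}{0}$ does not act on it — so the fractional linear ``action'' on complex symmetric matrices is only partial. Since the cocycle $\Omega \mapsto \det(C\Omega+D)$ is multiplicative, a transformation that is defined at a point has a defined inverse there and such transformations compose, so ``orbit'' still makes sense as an equivalence class; equivalently one may pass to the honest $\Sp_{2g}(\R)$-action on the Lagrangian Grassmannian compactifying the space of complex symmetric matrices. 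All statements about the action below are to be read in this sense.

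\emph{First assertion.} Suppose $C\Omega+D$ is invertible and set $\Omega' := (A\Omega+B)(C\Omega+D)^{-1}$. A direct computation using the defining relations $A^\top C = C^\top A$, $B^\top D = D^\top B$, and $A^\top D - C^\top B = I$ of $\Sp_{2g}(\R)$ — formally the same as in the classical positive-definite case — shows that $\Omega'$ is symmetric and that
\begin{equation*}
\im(\Omega') \;=\; \overline{(C\Omega+D)}^{-\top}\,\im(\Omega)\,(C\Omega+D)^{-1}.
\end{equation*}
The right-hand side exhibits $\im(\Omega')$ as a $\ast$-congruence transform $X \mapsto \overline{R}^\top X R$, with $R := (C\Omega+D)^{-1}$ invertible, of the real symmetric — hence Hermitian — matrix $X := \im(\Omega)$. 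Since a $\ast$-congruence preserves the signature of a Hermitian matrix (Sylvester's law of inertia), $\im(\Omega')$ again has signature $(g-k,k)$, so $\Omega' \in \HH_g^{(k)}$.

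\emph{Second assertion.} Each $\HH_g^{(k)}$ is open among complex symmetric matrices, since ``$\im(\Omega)$ has no zero eigenvalue and exactly $k$ negative ones'' is an open condition on the entries of $\Omega$. Their union $\bigcup_{k=0}^{g}\HH_g^{(k)} = \{\Omega = \Omega^\top : \det(\im\Omega) \neq 0\}$ has complement $\{\Omega = \Omega^\top : \det(\im\Omega) = 0\}$, a proper real-algebraic subvariety with empty interior; hence no orbit meeting the complement is open, and it suffices to show that each $\HH_g^{(k)}$ is a single orbit. Given $\Omega \in \HH_g^{(k)}$, Sylvester's law of inertia provides $A \in \GL_g(\R)$ with $A\,\im(\Omega)\,A^\top = I_{g-k} \oplus (-I_k) =: J_k$; then $\mattwo{A}{0}{0}{A^{-\top}} \in \Sp_{2g}(\R)$ sends $\Omega$ to $A\Omega A^\top = N_1 + iJ_k$ for some real symmetric $N_1$, and $\mattwo{I}{-N_1}{0}{I} \in \Sp_{2g}(\R)$ then sends that to $iJ_k$. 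Both transformations are everywhere defined, their lower-left block being $0$. Thus every element of $\HH_g^{(k)}$ lies in the orbit of $iJ_k$, which by the first assertion is contained in $\HH_g^{(k)}$; so $\HH_g^{(k)}$ is exactly the orbit of $iJ_k$, and since these $g+1$ sets are open and exhaust the symmetric matrices up to an empty-interior set, they are precisely the open orbits.

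\emph{Main obstacle.} The genuine content — the $\ast$-congruence identity together with Sylvester's law of inertia — is routine, as is the transitivity argument. What most needs care is the bookkeeping around the failure of $C\Omega+D$ to be invertible when $0<k<g$, i.e.\ pinning down exactly what ``the $\Sp_{2g}(\R)$-action'' and ``orbit'' mean on the space of complex symmetric matrices (as in the opening paragraph); the displayed formula for $\im(\Omega')$ is then a mechanical consequence of the symplectic relations, identical in form to the familiar genus-one calculation.
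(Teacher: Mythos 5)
Your proof is correct, but it takes a genuinely different route from the paper's. The paper argues generator by generator: it checks the claim for $\mattwo{I}{B}{0}{I}$, for $\mattwo{A^\top}{0}{0}{A^{-1}}$ (Sylvester), and for $\mattwo{0}{-I}{I}{0}$ via the identity $\im(-\Omega^{-1}) = \ol{\Omega}^{-\top}\im(\Omega)\Omega^{-1}$, and then invokes the fact that these three types generate $\Sp_{2g}(\R)$; the transitivity argument is essentially the same as yours. Your single $\ast$-congruence identity $\im(\Omega') = \ol{(C\Omega+D)}^{-\top}\im(\Omega)(C\Omega+D)^{-1}$ subsumes all three generator computations at once, and it buys something real: as your singular example $\mattwo{i}{1}{1}{-i} \in \HH_2^{(1)}$ shows, for $0<k<g$ the fractional linear action is only partial, and a product of generators can be defined at $\Omega$ even though some intermediate stage is not, so the generator-by-generator argument does not literally cover every $\gamma$ for which $\gamma\cdot\Omega$ is defined. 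Your direct formula closes that gap, and your remarks on composing partially defined transformations make precise what ``orbit'' means here, which the paper leaves implicit. You also supply the two small points the paper asserts without comment in its last sentence: that each $\HH_g^{(k)}$ is open, and that any other orbit lies in the empty-interior locus $\det(\im\Omega)=0$ and hence cannot be open. The trade-off is that the paper's generator approach is shorter and leans on a standard decomposition, while yours requires quoting (or verifying) the symplectic congruence identity; since you only assert that identity as ``a direct computation,'' you should be prepared to carry it out from the relations $A^\top C = C^\top A$, $B^\top D = D^\top B$, $A^\top D - C^\top B = I$ if pressed, but there is no gap in the logic.
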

\begin{proof}
Trivial for $\mattwo{I}{B}{0}{I}$.  For $\mattwo{A^\top}{0}{0}{A^{-1}}$, this is Sylvester's law of inertia.  For $\mattwo{0}{-I}{I}{0}$, we have $\im(-\Omega^{-1}) = \frac{1}{2i}(-\Omega^{-1}+\ol\Omega^{-1}) = \frac{1}{2i}\ol\Omega^{-1}(-\ol\Omega + \Omega)\Omega^{-1} = \ol\Omega^{-1}\im(\Omega)\Omega^{-1} = \left(\ol\Omega^{-1}\right)^\top\im(\Omega)\Omega^{-1}$, so $\im(-\Omega^{-1})$ and $\im(\Omega)$ have the same signature.  These three types of matrices generate $\Sp_{2g}(\R)$.

Now suppose $\Omega_1, \Omega_2 \in \HH_g^{(k)}$.  There exists a matrix $A \in \GL_g(\R)$ such that $A^\top \im(\Omega_1) A = \im(\Omega_2)$.  For an appropriate choice of real symmetric $B \in \M_g(\R)$, we thus have  $A^\top \Omega_1 A + B = \Omega_2$.  That is,
\begin{equation}
\mattwo{I}{B}{0}{I} \cdot \mattwo{A^\top}{0}{0}{A^{-1}} \cdot \Omega_1 = \Omega_2,
\end{equation}
so $\Omega_1$ and $\Omega_2$ are in the same $\Sp_{2g}(\R)$-orbit.

Thus, the $\HH_g^{(k)}$ are the open orbits of the $\Sp_{2g}(\R)$-action on the set of $g \times g$ symmetric complex matrices.
\end{proof}

\subsection{More canonical square roots}\label{sec:morecansqrt}

From now on, we will focus on the case of index $k=1$, which is signature $(g-1,1)$.  The construction of modular theta series for $k \geq 2$ utilises higher-order error functions arising in string theory \cite{generror}.  More research is needed to develop the higher index theory in the Siegel modular setting. 

\begin{lem}\label{lem:sqrtM}
Let $M$ be a real symmetric matrix of signature $(g-1,1)$.  On the region $R_M=\{z \in \C^g : \ol{z}^\top M z < 0\}$, there is a canonical choice of holomorphic function $g(z)$ such that $g(z)^2=-z^\top M z$.
\end{lem}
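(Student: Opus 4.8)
The plan is to exhibit the region $R_M$ as simply connected, so that the locally-defined square root of the nowhere-vanishing holomorphic function $-z^\top M z$ extends to a global one, and then pin down the sign by a normalization at a convenient base point. First I would observe that on $R_M$ the function $P(z) := -z^\top M z$ is holomorphic and nowhere zero: indeed $\re(\ol z^\top M z) = \re(P(z))$... more carefully, write $z = x + iy$ with $x,y \in \R^g$; then $\ol z^\top M z = (x^\top M x + y^\top M y) - 2i\, x^\top M y$, so the defining inequality $\ol z^\top M z < 0$ forces $x^\top M y = 0$ and $x^\top M x + y^\top M y < 0$. In particular $z^\top M z = (x^\top M x - y^\top M y) + 2i\, x^\top M y = x^\top M x - y^\top M y$ is real on $R_M$, and it cannot vanish there, since $x^\top M x = y^\top M y$ together with $x^\top M x + y^\top M y < 0$ would give both strictly negative — but then $x$ and $y$ would be two $M$-orthogonal vectors each of negative $M$-norm, impossible in signature $(g-1,1)$ where the negative-definite subspace is one-dimensional. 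Hence $P(z) \neq 0$ on $R_M$, and in fact $P(z) = y^\top M y - x^\top M x > 0$ is a positive real number (this is the key structural fact).

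Next I would identify the topology of $R_M$. Since $P(z) > 0$ is real on all of $R_M$, the naive "principal square root" $\sqrt{P(z)}$ is already a well-defined continuous — indeed real-analytic in $x,y$ — function; the only thing to check is that it is \emph{holomorphic} in $z$, which is not automatic from the formula. So the real content is an analytic-continuation / monodromy argument, and for that I want $R_M$ simply connected (or at least that $P$ has trivial monodromy on it). I would show $R_M$ is connected and simply connected by deformation retraction: after an $\R$-linear change of coordinates diagonalizing $M$ to $\mathrm{diag}(1,\dots,1,-1)$, the condition becomes $x^\top M y = 0$ and $|x'|^2 + |y'|^2 < x_g^2 + y_g^2$ (writing $x = (x', x_g)$ etc.), i.e. the last complex coordinate $z_g = x_g + i y_g$ dominates, plus a real-bilinear orthogonality constraint. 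This region fibers over $\{z_g \in \C : z_g \neq 0\}$... which is \emph{not} simply connected, so I should instead argue directly that $P$ has no monodromy: going around any loop in $R_M$, $P(z)$ stays in the positive reals $(0,\infty)$, which is simply connected, so $\log P(z)$ (principal branch) is single-valued along the loop and returns to its start — hence $g(z) := \exp\!\big(\tfrac12 \log P(z)\big) = \sqrt{P(z)}$ is globally well-defined.

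The remaining step is to promote single-valuedness to holomorphy and record the normalization. For holomorphy: $g(z)^2 = P(z)$ is holomorphic and $g$ is continuous and nowhere zero, so locally $g = \pm\sqrt{P}$ for a holomorphic branch of the square root (which exists locally since $P \neq 0$); continuity forces the sign to be locally constant, hence $g$ agrees locally with a holomorphic function, hence $g$ is holomorphic on $R_M$. The canonical choice is then fixed by requiring $g(z) > 0$ (equivalently $g(z) = +\sqrt{P(z)}$, the positive real square root), which is the content of "$g(z)^2 = -z^\top M z$" with the positive branch; I would note for later use that on the slice $z = iy$ with $y^\top M y < 0$ this gives $g(iy) = \sqrt{-y^\top M y}$, matching Zwegers's normalization.

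The main obstacle — and the step most likely to need care — is the topological claim: one must not carelessly assert $R_M$ is simply connected (it need not be, as the fibration over $\C^\times$ suggests), and instead argue that the \emph{image} of $R_M$ under $P$ lands in the simply connected set $(0,\infty)$, which is what actually kills the monodromy. Getting the signature input used correctly (that one cannot have two $M$-orthogonal negative vectors) is the other place a slip could occur, and it is exactly where the hypothesis "signature $(g-1,1)$" — as opposed to general indefinite signature — is essential.
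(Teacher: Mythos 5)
Your argument collapses at the first step: for a \emph{real symmetric} $M$, the form $z \mapsto \ol{z}^\top M z$ is Hermitian, so writing $z = x+iy$ one gets $\ol{z}^\top M z = x^\top M x + y^\top M y$ exactly---the imaginary part is $x^\top M y - y^\top M x = 0$ automatically. The defining inequality therefore does \emph{not} force $x^\top M y = 0$, and your ``key structural fact'' that $P(z) = -z^\top M z$ is a positive real number on $R_M$ is false. Concretely, for $M = \mathrm{diag}(1,-1)$ and $z = (0,\, 1+i)^\top$ we have $\ol{z}^\top M z = -2 < 0$ but $-z^\top M z = (1+i)^2 = 2i$; letting the second coordinate range over $\C^\times$ shows the image of $P$ meets every ray from the origin, so no argument that places the image inside a simply connected subset of $\C^\times$ can work. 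Since your entire monodromy argument rested on the image of $P$ lying in $(0,\infty)$, the proof does not go through. (Your suspicion that $R_M$ itself is not simply connected is correct: in diagonalized coordinates $R_J$ is biholomorphic to $\C^\times \times S$ with $S$ the open unit ball in $\C^{g-1}$, so $\pi_1(R_M) \cong \Z$.)

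What actually saves the lemma is homogeneity, not positivity. The paper's proof reduces to $J = \mathrm{diag}(-1,1,\dots,1)$ via $M = P^\top J P$ with $\det(P)>0$, observes that $z_1 \neq 0$ on $R_J$, and writes $-z^\top J z = z_1^2\bigl(1 - (z_2/z_1)^2 - \cdots - (z_g/z_1)^2\bigr)$. The point $(z_2/z_1,\dots,z_g/z_1)$ lies in the unit ball $S$, which \emph{is} simply connected and on which $1 - w_2^2 - \cdots - w_g^2$ is nonvanishing; that factor therefore has a distinguished holomorphic square root equal to $1$ at the origin, and multiplying by the single-valued function $z_1$ gives $g_J$. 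In loop terms, $P\circ\gamma$ winds around $0$ an \emph{even} number of times for every loop $\gamma$ in $R_M$, because the nontrivial monodromy sits entirely in the $z_1^2$ factor---this is the correct replacement for your claim that it does not wind at all. One must then still check independence of the choice of $P$ (an $\SO(g-1,1)$-invariance statement), a step your proposal omits but which is exactly what makes the square root ``canonical.'' Finally, your normalization ``$g(z)>0$'' is meaningless once $g$ is genuinely complex-valued, and on the slice $z=iy$ one has $g(iy)^2 = y^\top M y < 0$, so $g(iy)$ is purely imaginary rather than $\sqrt{-y^\top M y}$; the paper instead normalizes at a \emph{real} base vector ($g_J(e_1)=1$).
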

\begin{proof}
By Sylvester's law of inertia, there is some $P \in \GL_g^+(\R)$ (i.e., with $\det(P) > 0$) such that $M = P^\top JP$, where
\begin{equation}
J := \left(\begin{array}{ccccc}
-1 & 0 & 0 & \hdots & 0 \\
0 & 1 & 0 & \hdots & 0 \\
0 & 0 & 1& \hdots & 0 \\
\vdots & \vdots & \vdots & \ddots & \vdots \\
0 & 0 & 0 & \hdots & 1
\end{array}\right).
\end{equation}
The region $S := \{(z_2,\ldots,z_g) \in \C^{g-1} : |z_2|^2 + \cdots |z_g|^2 < 1\}$ is simply connected (as it is a solid ball) and does not intersect $\{(z_2,\ldots,z_g) \in \C^{g-1} : z_2^2+\cdots +z_g^2=1\}$ (because, if it did, we'd have $1 = \abs{z_2^2+\cdots +z_g^2} \leq |z_2|^2 + \cdots |z_g|^2 < 1$, a contradiction).  Thus, there exists a unique continuous branch of the function $\sqrt{1-z_2^2-\cdots-z_g^2}$ on $S$ sending $(0,\ldots,0)\mapsto 1$; this function is also holomorphic.
For $z\in R_J$, define
\begin{equation}
g_J(z) := z_1\sqrt{1 - \left(\frac{z_2}{z_1}\right)^2 - \cdots - \left(\frac{z_g}{z_1}\right)^2}.
\end{equation}
This $g_J$ is holomorphic and satisfies $g_J(z)^2 = -z^\top J z$, $g_J(\alpha z) = \alpha g_J(z)$, and $g_J(e_1)=1$ where
\begin{equation}
e_1 := \left(\begin{array}{c}1 \\ 0 \\ \vdots \\ 0\end{array}\right).
\end{equation}
Conversely, if we have a continuous function $g(z)$ satisfying $g(z)^2 = -z^\top J z$ and $g(e_1)=1$, it follows that $g(\alpha z) = \alpha g(z)$, and thus $g(z)=g_J(z)$.

Now, we'd like to define $g_M(z) := g_J(Pz)$, so that we have $g_M(z)^2 = -z^\top M z$.  We need to check that this definition does not depend on the choice of $P$.  Suppose $M = P_1^\top J P_1 = P_2^\top J P_2$ for $P_1,P_2 \in \GL_g^+(\R)$.  So $J = \left(P_2P_1^{-1}\right)^\top J \left(P_2P_1^{-1}\right)$, that is, $P_2P_1^{-1} \in \O(g-1,1)$.  But $\det(P_2P_1^{-1}) = \det(P_2)\det(P_1)^{-1} > 0$, so, in fact, $P_2P_1^{-1} \in \SO(g-1,1)$.  

For any $Q \in \SO(g-1,1)$, we have $g_J(Qe_1)^2 = 1$.  The function $Q \mapsto g_J(Qe_1)$ must be either the constant $1$ or the constant $-1$, because $\SO(g-1,1)$ is connected.  Since $g_J(e_1) = 1$ ($Q=I$), we have $g_J(Qe_1) = 1$ for all $Q \in SO(1,g-1)$.  The function $z \mapsto g_J(Qz)$ is a continuous square root of $-z^\top Jz$ sending $e_1$ to 1, so $g_J(Qz)=g_J(z)$.  Taking $Q = P_2P_1^{-1}$ and replacing $z$ with $P_1 z$, we have $g_J(P_2z)=g_J(P_1z)$, as desired.
\end{proof}

\begin{defn}\label{defn:sqrtM}
If $M$ is a real symmetric matrix of signature $(g-1,1)$, we will write $\sqrt{-z^\top M z}$ for the function $g_M(z)$ in \Cref{lem:sqrtM}.  We may also use similar notation, such as $\sqrt{-\foh z^\top M z} := \frac{1}{\sqrt{2}}\sqrt{-z^\top M z}$.
\end{defn}

\begin{lem}\label{lem:posdefc}
Suppose $M$ is a real symmetric matrix of signature $(g-1,1)$, and $c \in \C^g$ such that $\ol{c}^\top M c < 0$.  Then, $M+M\re\left(\left(-\foh c^\top Mc\right)^{-1}cc^\top\right)M$ is well-defined (that is, $c^\top Mc \neq 0$) and positive definite.
\end{lem}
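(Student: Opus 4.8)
The plan is to establish the two assertions in turn: first that $c^\top M c \neq 0$, so that the real symmetric matrix
\[
\mathcal{A} := M + M\,\re\left(\left(-\foh c^\top M c\right)^{-1}cc^\top\right)M
\]
is well-defined, and then that $\mathcal{A}$ is positive definite. For the first point I would write $c = a + ib$ with $a,b \in \R^g$; since $M$ is symmetric, $\bar c^\top M c = a^\top M a + b^\top M b$, which is negative by hypothesis, while $c^\top M c = (a^\top M a - b^\top M b) + 2i\,a^\top M b$. If $c^\top M c$ vanished we would have $a^\top M a = b^\top M b = \foh\,\bar c^\top M c < 0$ and $a^\top M b = 0$, so $a$ and $b$ are both nonzero and in fact linearly independent (if $b = \lambda a$ then $a^\top M b = \lambda\, a^\top M a \neq 0$); then $Q_M$ is negative definite on the $2$-dimensional subspace $\mathrm{span}_\R(a,b)$, contradicting the signature $(g-1,1)$. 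Hence $c^\top M c \neq 0$.

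For positive-definiteness I would combine a determinant computation with a connectedness argument. Set $\gamma := c^\top M c \neq 0$, $\delta := \bar c^\top M c < 0$, $\mu := -\foh\gamma$, and $R := \re(\mu^{-1}cc^\top)$. Since $R = \foh(\mu^{-1}cc^\top + \bar\mu^{-1}\bar c\,\bar c^\top)$ has rank at most $2$, we may write $\mathcal{A} = M(I + RM)$ with $RM = UV$, where $U = [\,c \mid \bar c\,]$ is $g \times 2$ and $V$ is the $2 \times g$ matrix with rows $\foh\mu^{-1}(Mc)^\top$ and $\foh\bar\mu^{-1}(M\bar c)^\top$. Sylvester's determinant identity $\det(I_g + UV) = \det(I_2 + VU)$ reduces the computation to a $2\times 2$ determinant; using $c^\top M c = \gamma$, $c^\top M\bar c = \bar c^\top M c = \delta$, and $\mu^{-1}\gamma = \bar\mu^{-1}\bar\gamma = -2$, one gets $I_2 + VU = \left(\begin{smallmatrix}0 & \foh\mu^{-1}\delta\\ \foh\bar\mu^{-1}\delta & 0\end{smallmatrix}\right)$, whence $\det\mathcal{A} = -\det(M)\,\delta^2/|\gamma|^2$. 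Since the signature $(g-1,1)$ forces $\det M < 0$, this gives $\det\mathcal{A} > 0$; in particular $\mathcal{A}$ is invertible for every admissible $c$.

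Next, consider $D := \{c \in \C^g : \bar c^\top M c < 0\}$. It is open and nonempty (take $c$ real with $c^\top M c < 0$), and it is connected: writing $M = P^\top J P$ with $P \in \GL_g(\R)$ and $J = \mathrm{diag}(-1,1,\dots,1)$, the map $c \mapsto Pc$ carries $D$ homeomorphically onto $\{d : |d_1|^2 > |d_2|^2 + \cdots + |d_g|^2\}$, which is the continuous image of the connected set $\C^\times \times \{w \in \C^{g-1} : |w| < 1\}$ under $(d_1, w) \mapsto (d_1, |d_1|\,w)$. On $D$ the matrix $\mathcal{A} = \mathcal{A}(c)$ varies continuously (since $c^\top M c \neq 0$ there) and is invertible by the previous paragraph, so the number of negative eigenvalues of $\mathcal{A}(c)$ is locally constant, hence constant on $D$. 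Finally, at a real point $c_0 \in D$ the correction term becomes $-\tfrac{2}{c_0^\top M c_0}(Mc_0)(Mc_0)^\top$, and decomposing $x \in \R^g$ as $x = \kappa c_0 + y$ with $c_0^\top M y = 0$ gives $x^\top\mathcal{A}(c_0)x = -\kappa^2(c_0^\top M c_0) + y^\top M y$, a sum of two nonnegative terms --- the first because $c_0^\top M c_0 < 0$, the second because the $M$-orthogonal complement of $\R c_0$ is positive definite --- which vanishes only at $x = 0$. Thus $\mathcal{A}(c_0)$ is positive definite, and therefore so is $\mathcal{A}(c)$ for every $c \in D$.

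I expect the determinant computation to be the crux of the argument; once $\det\mathcal{A} > 0$ is in hand the rest is soft. A more computational alternative bypasses the topology: the subspace $U_0 := \{x \in \R^g : a^\top M x = b^\top M x = 0\}$ lies in the (positive-definite) Hermitian-orthogonal complement of $c$, so $M|_{U_0}$ is positive definite, and $U_0$ is $M$-orthogonal to a complementary subspace of dimension $d := \dim_\R\mathrm{span}(a,b) \le 2$ containing $a$ and $b$; as $\mathcal{A}$ acts as $M$ on $U_0$ while the correction term takes values in the complement, this reduces matters to $g \le 2$, where a direct calculation finishes --- for instance, normalizing $M = \left(\begin{smallmatrix}0&1\\1&0\end{smallmatrix}\right)$ in the rank-$2$ case makes $\mathcal{A}$ diagonal with entries $-\re(c_2/c_1)$ and $-\re(c_1/c_2)$, both positive by the sign hypothesis.
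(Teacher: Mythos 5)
Your proposal is correct, and it takes a genuinely different route from the paper's. For well-definedness the paper computes the $2\times 2$ Gram determinant of $c,\overline{c}$ for the Hermitian form $(v,w)\mapsto \overline{v}^\top M w$ and deduces $|c^\top Mc| > |\overline{c}^\top Mc| > 0$, whereas you split $c=a+ib$ and derive a $2$-dimensional negative definite real subspace from the hypothesis $c^\top Mc=0$; both arguments rest on the same fact that signature $(g-1,1)$ admits no $2$-dimensional nonpositive subspace. For positive definiteness the paper works pointwise and purely algebraically: it decomposes $\C^g = W \oplus \C\overline{c}$ with $W=\{w: \overline{c}^\top Mw=0\}$, checks that the Hermitian form $\overline{v}^\top A v$ restricts to the (positive definite) form $\overline{w}^\top Mw$ on $W$, that $c^\top Aw=0$ for $w\in W$, and that $c^\top A\overline{c} = -\overline{c}^\top Mc>0$. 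You instead prove $\det \mathcal{A} = -\det(M)\,(\overline{c}^\top Mc)^2/|c^\top Mc|^2 > 0$ via Sylvester's determinant identity, then use connectedness of $\{c: \overline{c}^\top Mc<0\}$ and continuity of the (everywhere invertible) family $\mathcal{A}(c)$ to transport positive definiteness from a real point $c_0$, where the direct computation $x^\top\mathcal{A}(c_0)x = -\kappa^2 c_0^\top Mc_0 + y^\top My$ settles it. The paper's argument is local and self-contained, needing no topology; yours requires the connectedness of the negative cone, but that is a fact the paper invokes anyway (to define its canonical square root), and your explicit determinant formula is an appealing by-product. Your closing ``computational alternative'' is sketchier --- the reduction to $g\le 2$ needs the nondegeneracy of $M$ on $\mathrm{span}(a,b)$, which holds but is not checked --- yet it is offered only as an aside and the main argument stands without it.
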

\begin{proof}
Because $M$ has signature $(g-1,1)$ and $\ol{c}^\top M c < 0$, 
\begin{align}
\left(\ol{c}^\top M c\right)^2 - \abs{c^\top M c}^2 = \det{\mattwo{\ol{c}^\top M c}{c^\top M c}{\ol{c}^\top M \ol{c}}{c^\top M \ol{c}}} < 0.
\end{align}
Thus, $\abs{c^\top M c} > \left(\ol{c}^\top M c\right)^2 > 0$, so $c^\top M c \neq 0$ and $M+M\re\left(\left(-\foh c^\top Mc\right)^{-1}cc^\top\right)M$ is well defined.  Let
\begin{align}
A &:= M+M\re\left(\left(-\foh c^\top Mc\right)^{-1}cc^\top\right)M \\
   &= M - M\left(c^\top Mc\right)^{-1}cc^\top M - M\left(\ol{c}^\top M\ol{c}\right)^{-1}\ol{c}\ol{c}^\top M.
\end{align} 
On the $(g-1)$-dimensional subspace $W = \{w \in \C^g : \ol{c}^\top M w = 0\}$, the sesquilinear form $w \mapsto \ol{w}^\top M w$ is positive definite; this follows from the fact that $\ol{c}^\top Mc<0$, because $M$ has signature $(g-1,1)$.  
For nonzero $w \in W$,
\begin{align}
\ol{w}^\top Aw &= \ol{w}^\top M w - (c^\top Mc)^{-1}(\ol{w}^\top M c)(c^\top M w) - (\ol{c}^\top M\ol{c})^{-1}(\ol{w}^\top M \ol{c})(\ol{c}^\top M w) \\
&= \ol{w}^\top M w - (c^\top Mc)^{-1}(0)(c^\top M w) - (\ol{c}^\top M\ol{c})^{-1}(\ol{w}^\top M \ol{c})(0) \\
&= \ol{w}^\top M w > 0.
\end{align}
Moreover,
\begin{align}
c^\top Aw &= c^\top M w - (c^\top Mc)^{-1}(c^\top M c)(c^\top M w) - (\ol{c}^\top M\ol{c})^{-1}(c^\top M \ol{c})(\ol{c}^\top M w) \\
&= c^\top M w - c^\top M w - (\ol{c}^\top M\ol{c})^{-1}(c^\top M \ol{c})(0) \\
&= 0,
\end{align}
and
\begin{align}
c^\top A\ol{c} &= c^\top M \ol{c} - (c^\top Mc)^{-1}(c^\top M c)(c^\top M \ol{c}) - (\ol{c}^\top M\ol{c})^{-1}(c^\top M \ol{c})(\ol{c}^\top M \ol{c}) \\
&= c^\top M \ol{c} - c^\top M \ol{c} - c^\top M \ol{c} \\
&= - c^\top M \ol{c} \\
&= - \ol{c}^\top M c > 0.
\end{align}
We have now shown that $A$ is positive definite, as it is positive definite on subspaces $W$ and $\C \ol{c}$, and these subspaces span $\C^g$ and are perpendicular with respect to $A$.
\end{proof}

\begin{lem}\label{lem:matrixids}
Let $\Omega = N + iM$ be an invertible complex symmetric $g \times g$ matrix.  Consider $c \in \C^g$ such that $\ol{c}^\top M c < 0$.  The following identities hold:
\begin{itemize}
\item[(1)] $M\Omega^{-1} = \ol{\Omega}\im\left(-\Omega^{-1}\right)$.
\item[(2)] $M-2i M\Omega^{-1} M = \ol\Omega\im\left(-\Omega^{-1}\right)\ol\Omega$.
\item[(3)] $\det\left(-i\left(\Omega - \frac{2i}{c^\top M c}Mcc^\top M\right)\right) = \det(-i\Omega)\frac{c^\top\ol\Omega\im\left(-\Omega^{-1}\right)\ol\Omega c}{c^\top M c}$.
\end{itemize}
\end{lem}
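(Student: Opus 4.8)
The plan is to prove the three identities in order, since (2) is an immediate consequence of (1), and (3) follows quickly from (2) together with the matrix determinant lemma. The starting point is the auxiliary identity $\im(-\Omega^{-1}) = \ol\Omega^{-1} M \Omega^{-1}$, which is essentially the computation already carried out in the proof of \Cref{prop:S}: from $\im(Z) = \frac{1}{2i}(Z - \ol Z)$ one gets $\im(-\Omega^{-1}) = \frac{1}{2i}(\ol\Omega^{-1} - \Omega^{-1}) = \frac{1}{2i}\ol\Omega^{-1}(\Omega - \ol\Omega)\Omega^{-1}$, and $\Omega - \ol\Omega = 2iM$ because $M$ and $N$ are real. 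Left-multiplying by $\ol\Omega$ gives (1) at once, $\ol\Omega\,\im(-\Omega^{-1}) = M\Omega^{-1}$. For (2) I would right-multiply (1) by $\ol\Omega$ and substitute $\ol\Omega = \Omega - 2iM$, obtaining $\ol\Omega\,\im(-\Omega^{-1})\,\ol\Omega = M\Omega^{-1}\ol\Omega = M\Omega^{-1}(\Omega - 2iM) = M - 2iM\Omega^{-1}M$.

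For (3), I would first note that $c^\top M c \neq 0$ — this is part of the content of \Cref{lem:posdefc}, where it is deduced from $\ol c^\top M c < 0$ — so the left-hand side is well-defined. Then I would view $\Omega - \frac{2i}{c^\top M c}Mcc^\top M$ as a rank-one perturbation $\Omega + uv^\top$ of $\Omega$, with $v = Mc$ and $u = -\frac{2i}{c^\top M c}Mc$ (using $M = M^\top$), and apply the matrix determinant lemma $\det(\Omega + uv^\top) = \det(\Omega)(1 + v^\top\Omega^{-1}u)$. This gives $\det(\Omega)\left(1 - \frac{2i}{c^\top M c}c^\top M\Omega^{-1}Mc\right)$; placing the two terms over the common denominator $c^\top M c$ and using (2) in the form $c^\top M c - 2i\,c^\top M\Omega^{-1}Mc = c^\top(M - 2iM\Omega^{-1}M)c = c^\top\ol\Omega\,\im(-\Omega^{-1})\,\ol\Omega c$, the scalar factor collapses to exactly $\frac{c^\top\ol\Omega\,\im(-\Omega^{-1})\,\ol\Omega c}{c^\top M c}$. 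Finally, multiplying both sides by $(-i)^g$ and using $\det(-iX) = (-i)^g\det(X)$ turns $\det(\Omega)$ into $\det(-i\Omega)$ and yields the stated equality.

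There is no substantial obstacle in this lemma; the only care required is bookkeeping — correctly identifying the rank-one vectors $u$ and $v$ for the matrix determinant lemma, tracking the scalar $1/(c^\top M c)$ and the factor $(-i)^g$, and invoking \Cref{lem:posdefc} at the outset to guarantee $c^\top M c \neq 0$ so that every expression appearing is legitimate.
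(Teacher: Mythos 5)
Your proposal is correct and follows essentially the same route as the paper: (1) via $\Omega-\ol\Omega=2iM$, (2) by multiplying (1) by $\ol\Omega=\Omega-2iM$, and (3) via the rank-one determinant identity (your matrix determinant lemma is the paper's observation that $\det(I+A)=1+\Tr(A)$ for rank-one $A$), combined with (2) and the nonvanishing of $c^\top Mc$ from \Cref{lem:posdefc}. No gaps.
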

\begin{proof}
Proof of (1):
\begin{align}
M\Omega^{-1} &= \frac{1}{2i}(\Omega - \ol\Omega)\Omega^{-1}
                       = \frac{1}{2i}(I - \ol\Omega\Omega^{-1}) \\
                       &= \ol\Omega\frac{1}{2i}(\ol\Omega^{-1} - \Omega^{-1})
                       = \ol\Omega\im\left(-\Omega^{-1}\right).
\end{align}
Proof of (2):
\begin{align}
M - 2iM\Omega^{-1}M &= M\Omega^{-1}\left(\Omega - 2iM\right) \\
                                 &= \ol\Omega\im\left(-\Omega^{-1}\right)\left(\Omega - (\Omega-\ol\Omega)\right) \mbox{ using (1)} \\
                                 &= \ol\Omega\im\left(-\Omega^{-1}\right)\ol\Omega.
\end{align}
Proof of (3): Note that $\det(I+A) = 1+\Tr(A)$ for any rank 1 matrix $A$.  
Thus,
\begin{align}
&\det\left(-i\left(\Omega - \frac{2i}{c^\top M c}Mcc^\top M\right)\right) \nn \\
&=\det(-i\Omega)\det\left(I + \frac{2i}{c^\top M c}(\Omega Mc)(Mc)^\top\right) \\
&=\det(-i\Omega)\left(1 + \Tr\left(\frac{2i}{c^\top M c}(\Omega Mc)(Mc)^\top\right)\right) \\
&=\det(-i\Omega)\left(1 + \left(\frac{2i}{c^\top M c}c^\top M\Omega^{-1} Mc\right)\right) \\
&=\det(-i\Omega)\frac{-c^\top\left(M - 2i M\Omega^{-1} M\right)c}{-c^\top Mc} \\
&=\det(-i\Omega)\frac{-(\ol\Omega c)^\top\im\left(-\Omega^{-1}\right)(\ol\Omega c)}{-c^\top Mc},
\end{align}
using (2) in the last step.
\end{proof}

\begin{defn}[Canonical square root]
If $\Omega \in \HH_g^{(1)}$, then we define $\sqrt{\det(-i\Omega)}$ as follows.  Write $\Omega = N+iM$ for $N,M\in \M_g(\R)$, and choose any $c$ such that $\ol{c}^\top Mc < 0$.  By \Cref{lem:posdefc}, the matrix $M+M\re\left(\left(-\foh c^\top Mc\right)^{-1}cc^\top\right)M$ is positive definite.  We can also rewrite this matrix as $M+M\re\left(\left(-\foh c^\top Mc\right)^{-1}cc^\top\right)M = \im\left(\Omega - \frac{2i}{c^\top M c}Mcc^\top M\right)$.  By part (3) of \Cref{lem:matrixids},
\begin{equation}
\det\left(-i\left(\Omega - \frac{2i}{c^\top M c}Mcc^\top M\right)\right)
=\det(-i\Omega)\frac{-(\ol\Omega c)^\top\im\left(-\Omega^{-1}\right)(\ol\Omega c)}{-c^\top Mc}.
\end{equation}
We can thus define $\sqrt{\det(-i\Omega)}$ as follows:
\begin{equation}
\sqrt{\det(-i\Omega)} := \frac{\sqrt{-c^\top Mc}\sqrt{\det\left(-i\left(\Omega - \frac{2i}{c^\top M c}Mcc^\top M\right)\right)}}{\sqrt{-(\ol\Omega c)^\top\im\left(-\Omega^{-1}\right)(\ol\Omega c)}},
\end{equation}
where the square roots on the right-hand side are as defined in \Cref{defn:intsq} and \Cref{defn:sqrtM}.  This definition does not depend on the choice of $c$, because $\{c \in \C^g : \ol{c}^\top M c < 0\}$ is connected.
\end{defn}

\subsection{Definition of indefinite theta functions}

\begin{defn}\label{defn:incomplete}
For any complex number $\alpha$ and any entire test function $f$, define the \textit{incomplete Gaussian transform}
\begin{align}
\eE_f(\alpha) &:= \int_{0}^\alpha f(u) e^{-\pi u^2}\,du,
\end{align}
where the integral may be taken along any contour from $0$ to $\alpha$.  
In particular, for the constant functions $\one(u) = 1$, set
\begin{align}
\eE(\alpha) := \eE_{\one}(\alpha) = \int_{0}^\alpha e^{-\pi u^2}\,du = \frac{\alpha}{2|\alpha|}\int_0^{|\alpha|^2} t^{-1/2}e^{-\pi (\alpha/|\alpha|)^2 t}\,dt.
\end{align}
When $\alpha$ is real, define $\eE_f(\alpha)$ for an arbitrary continuous test function $f$:
\begin{align}
\eE_f(\alpha) &:= \int_{0}^\alpha f(u) e^{-\pi u^2}\,du.
\end{align}
\end{defn}

\begin{defn}\label{defn:indeftheta}
Define the \textit{indefinite theta function attached to the test function $f$} to be
\begin{equation}\label{eq:theta}
\Theta^{c_1,c_2}[f](z;\Omega) := \sum_{n \in \Z^g} \left.\eE_f\left(\frac{c^\top\im(\Omega n+z)}{\sqrt{-\foh c^\top \im(\Omega) c}}\right)\right|_{c=c_1}^{c_2} e\left(\frac{1}{2}n^\top\Omega n + n^\top z\right),
\end{equation}
where $\Omega \in \HH_g^{(1)}$, $z \in \C^g$, $c_1, c_2 \in \C^g$, 
$\ol{c_1}^\top M c_1 < 0$, $\ol{c_2}^\top M c_2 < 0$, and $f(\xi)$ is a continuous function of one variable satisfying the growth condition $\log\abs{f(\xi)} = o\left(\abs{\xi}^2\right)$.  If the $c_j$ are not both real, also assume that $f$ is entire.

Also 
define the \textit{indefinite theta function} $\Theta^{c_1,c_2}(z;\Omega) := \Theta^{c_1,c_2}[\one](z;\Omega)$.
\end{defn}

The function $\Theta^{c_1,c_2}(z;\Omega)=\Theta^{c_1,c_2}[\one](z;\Omega)$ is the function we are most interested in, because it will turn out to satisfy a symmetry in $\Omega \mapsto -\Omega^{-1}$. 
We will also show that the functions $\Theta^{c_1,c_2}[u \mapsto |u|^r](z;\Omega)$ are equal (up to a constant) for certain special values of the parameters. 

Before we can prove the transformation laws of our theta functions, we must show that the series defining them converges.

\begin{prop}\label{prop:conv}
The indefinite theta series attached to $f$ (\cref{eq:theta}) converges absolutely and uniformly for $z \in \R^g + iK$, where $K$ is a compact subset of $\R^g$ (and for fixed $\Omega$, $c_1$, $c_2$, and $f$). 
\end{prop}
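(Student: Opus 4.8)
The plan is a Weierstrass $M$-test. Set $M=\im(\Omega)$, write $z=x+iy$ with $y\in K$, and put $\alpha_c(n)=\dfrac{c^\top(Mn+y)}{\sqrt{-\foh c^\top Mc}}$ for $c\in\{c_1,c_2\}$. Since $\left|e\!\left(\foh n^\top\Omega n+n^\top z\right)\right|=\exp\!\left(-\pi\, n^\top Mn-2\pi\, y^\top n\right)$ and $\exp(-2\pi y^\top n)\le\exp\!\left(2\pi\abs{n}\sup_{y\in K}\abs{y}\right)$ is a harmless $e^{O(\abs{n})}$ factor, it suffices to produce $C,\delta>0$ depending only on $\Omega,c_1,c_2,f,K$ (and not on $z$) such that
\begin{equation*}
\left|\left.\eE_f\!\left(\alpha_c(n)\right)\right|_{c=c_1}^{c_2}\right|\cdot\exp\!\left(-\pi\, n^\top Mn\right)\ \le\ C\exp\!\left(-\delta\abs{n}^2+O(\abs{n})\right)\qquad(n\in\Z^g),
\end{equation*}
because $\sum_{n\in\Z^g}\exp(-\delta\abs{n}^2+O(\abs{n}))<\infty$; uniformity of the constants in $y\in K$ then yields absolute and uniform convergence on $\R^g+iK$.

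First I would record two properties of the incomplete Gaussian transform. \emph{(i)} $\eE_f$ is bounded on $\R$: the growth hypothesis $\log\abs{f(\xi)}=o(\abs{\xi}^2)$ makes $u\mapsto f(u)e^{-\pi u^2}$ absolutely integrable, so $\eE_f$ extends continuously to $[-\infty,+\infty]$ with finite limits $L_\pm:=\eE_f(\pm\infty)$. \emph{(ii)} For each $\varepsilon>0$ there is $C_\varepsilon$ with $\left|\eE_f(\alpha)-L_{\sgn\re\alpha}\right|\le C_\varepsilon\exp\!\left(-(\pi-\varepsilon)\re(\alpha^2)\right)$; for real $\alpha$ this is immediate from $\left|\eE_f(\alpha)-L_{\sgn\alpha}\right|\le\int_{\abs{\alpha}}^{\infty}\abs{f(u)}e^{-\pi u^2}\,du$ and the growth bound, while for complex $\alpha$ — the one place the hypothesis that $f$ be entire is used — one deforms the contour to a ray on which $\re(u^2)$ is increasing, so that $\eE_f$ approaches the \emph{same} limit $L_\pm$ along every ray with $\re(\alpha^2)\to+\infty$ and the tail bound holds with $\re(\alpha^2)$ in the exponent.

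Then comes the dichotomy over $\Z^g\setminus\{0\}$: let $\mathcal B$ be the set of $n$ for which $\re\,\alpha_{c_1}(n)$ and $\re\,\alpha_{c_2}(n)$ fail to have the same sign, and $\mathcal A$ the complement. Following Zwegers \cite{zwegers}, the geometry of the cone $\{v:Q_M(v)<0\}$ shows (given that $c_1,c_2$ are chosen compatibly, so that the sign agreement below holds) that $\mathcal B$ lies in a closed cone inside $\{Q_M>0\}$, disjoint from the light cone $\{Q_M=0\}$: since $\ol{c_j}^\top Mc_j<0$, one cannot have $c_j^\top Mn=0$ for a nonzero $n$ with $Q_M(n)\le 0$ (else $M$ would be negative definite on a $2$-plane), so $\sgn(c_j^\top Mn)$ is constant on each component of $\{Q_M\le 0\}\setminus\{0\}$ and the two signs agree there. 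By compactness of the unit sphere there is hence $\delta_0>0$ with $Q_M(n)\ge\delta_0\abs{n}^2$ for $n\in\mathcal B$ (the $y$-shift is a bounded translation, perturbing $\mathcal B$ only within a bounded ball). On $\mathcal B$ I bound the difference crudely by $2\sup_{\R}\abs{\eE_f}$ (finite by (i)) and use $\exp(-\pi n^\top Mn)=\exp(-2\pi Q_M(n))\le\exp(-2\pi\delta_0\abs{n}^2)$. On $\mathcal A$, the sign agreement makes the leading parts cancel: with $L=L_{\sgn\re\alpha_{c_j}(n)}$ one has $\eE_f(\alpha_{c_2}(n))-\eE_f(\alpha_{c_1}(n))=\bigl(\eE_f(\alpha_{c_2}(n))-L\bigr)-\bigl(\eE_f(\alpha_{c_1}(n))-L\bigr)$, which by (ii) is $\le C_\varepsilon\bigl(\exp(-(\pi-\varepsilon)\re\alpha_{c_1}(n)^2)+\exp(-(\pi-\varepsilon)\re\alpha_{c_2}(n)^2)\bigr)$. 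Here \Cref{lem:posdefc} is exactly what is needed: $n^\top Mn+\re\bigl(\alpha_{c_j}(n)^2\bigr)$ equals $n^\top\bigl(M+M\re((-\foh c_j^\top Mc_j)^{-1}c_jc_j^\top)M\bigr)n$ up to terms affine in $n$ (exactly when $y=0$), and the matrix in parentheses is positive definite, so this quantity is $\ge\lambda_j\abs{n}^2-O(\abs{n})$ with $\lambda_j>0$ its least eigenvalue. Rewriting the logarithm of the $j$-th summand as $-(\pi-\varepsilon)\bigl(n^\top Mn+\re\alpha_{c_j}(n)^2\bigr)-\varepsilon\, n^\top Mn-2\pi y^\top n$ and using $\abs{n^\top Mn}=O(\abs{n}^2)$ gives an upper bound $-(\pi-\varepsilon)\lambda_j\abs{n}^2+O(\varepsilon\abs{n}^2)+O(\abs{n})$, which for $\varepsilon$ small is $\le-\delta\abs{n}^2+O(\abs{n})$. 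Summing the two terms and noting all constants depend on $y$ only through $\sup_{y\in K}\abs{y}$ completes the $M$-test uniformly on $\R^g+iK$.

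The main obstacle is property \emph{(ii)} in the complex case: the contour-deformation argument showing $\eE_f$ has well-defined limits $L_\pm$ along all complex rays with $\re(\alpha^2)\to+\infty$ and decays like $\exp(-(\pi-\varepsilon)\re\alpha^2)$, with enough uniformity as $\arg\alpha$ approaches $\pm\tfrac{\pi}{4}$, together with checking that the $\re(\alpha^2)$-version of the positive-definite identity from \Cref{lem:posdefc} still beats the $\exp(-\pi n^\top Mn)$ growth — which it does precisely because \Cref{lem:posdefc} was stated with a $\re(\cdot)$ built into it. The real-$c_j$ case is Zwegers' original estimate, now with $\im(\Omega)$ an arbitrary matrix of signature $(g-1,1)$ in place of a multiple of a fixed one, a generalization invisible to the convergence bound.
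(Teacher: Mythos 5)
Your real-parameter argument is essentially Zwegers's original one and is sound (the ``compatibility'' you assume parenthetically is in fact automatic, since $\alpha_c(n)$ is unchanged under $c\mapsto -c$, so one may always put $c_1,c_2$ in the same component of the cone); but the paper takes a genuinely different route that avoids your case analysis altogether. It normalises $\re(\ol{c_1}^\top M c_2)<0$, interpolates $c(\l)=(1-\l)c_1+\l c_2$ inside the cone, writes the difference $\left.\eE_f(\cdots)\right|_{c=c_1}^{c_2}$ as the single contour integral $\int_\gamma f(u)e^{-\pi u^2}\,du$ along $\gamma(\l)=c(\l)^\top(Mn+y)\big/\sqrt{-\foh c(\l)^\top Mc(\l)}$, and bounds the integrand pointwise by $e^{\pi y^\top M^{-1}y}e^{-\pi (n+M^{-1}y)^\top A(\l)(n+M^{-1}y)}$ with $A(\l)$ the positive definite matrix of \Cref{lem:posdefc}; compactness of $[0,1]\times\{\abs{x}=1\}$ gives a uniform $\e>0$. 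No dichotomy over $n$, no limiting values $L_\pm$, and no separate treatment of complex cone parameters are needed.

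For complex $c_1,c_2$ your argument has two genuine gaps, both stemming from estimating $\eE_f(\alpha)$ in isolation rather than together with the factor $e^{-\pi n^\top Mn}$. First, property (ii) is false as stated: take $f(u)=e^{cu}$ (entire, with $\log\abs{f(\xi)}=O(\abs{\xi})=o(\abs{\xi}^2)$) and $\alpha=Re^{i\theta}$ with $\theta\to\pi/4$ so that $\re(\alpha^2)$ stays bounded while $\abs{\alpha}\to\infty$; a direct computation along the ray $\alpha+t$ shows $\abs{\eE_f(\alpha)-L_+}\asymp R^{-1}e^{cR/\sqrt{2}-\pi\re(\alpha^2)}$, which no bound of the form $C_{\e}e^{-(\pi-\e)\re(\alpha^2)}$ can accommodate. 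The correct tail estimate from your contour deformation is $C_{\e}\exp(-\pi\re(\alpha^2)+\e\abs{\alpha}^2)$, and the argument only closes after you additionally invoke $\abs{\alpha_{c_j}(n)}=O(\abs{n})$ so that the $\e\abs{\alpha}^2$ error is absorbed by the positive definite form $n^\top A^{c_j}n$ of \Cref{lem:posdefc}. Second, on $\mathcal{B}$ the bound $2\sup_{\R}\abs{\eE_f}$ is only legitimate when the arguments $\alpha_{c_j}(n)$ are real; for complex $c_j$ one can have $\re(\alpha_{c_j}(n)^2)\asymp-\abs{n}^2$ on $\mathcal{B}$, in which case $\abs{\eE_f(\alpha_{c_j}(n))}$ grows like $e^{C\abs{n}^2}$ and must again be beaten using $\re(\alpha_{c_j}(n)^2)+n^\top Mn\geq\lambda_j\abs{n}^2-O(\abs{n})$, not by a constant. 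Both defects are repairable with the corrected estimates (and the resulting patched proof still converges, since $n^\top A^{c_j}n$ controls everything), but the two inequalities you actually rely on do not hold; the paper's interpolating-contour device is precisely the idea that makes these issues disappear.
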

\begin{proof}
Let $M = \im \Omega$.  
We may multiply $c_1$ and $c_2$ by any complex scalar without changing the terms of the series \cref{eq:theta}, so we assume without loss of generality that $\re(\ol{c_1}^\top M c_2)<0$.

For $\l \in [0,1]$, define the vector $c(\l) = (1-\l) c_1 + \l c_2$ and the real symmetric matrix $A(\l) := M+M\re\left(\left(-\foh c(\l)^\top M c(\l)\right)^{-1}c(\l)c(\l)^\top\right) M$.  
Note that $\ol{c(\l)}^\top Mc(\l) = (1-\l)^2\ol{c_1}^\top Mc_1 + 2\l(1-\l)\re(\ol{c_1}^\top Mc_2) + \l^2\ol{c_2}^\top Mc_2 < 0$ because each term is negative (except when $\l=0$ or $1$, in which case one term is negative and the others are zero).  
By \Cref{lem:posdefc}, $A(\l)$ is well-defined and positive definite for each $\lambda \in [0,1]$.

Consider $(x,\l) \mapsto x^\top A(\l) x$ as a positive real-valued continuous function on the compact set that is the product of the unit ball $\{x^\top x = 1\}$ and the interval $[0,1]$.  It has a global minimum $\e > 0$.

The parametrization $\gamma : [0,1] \to \C$, $\gamma(\l) := \frac{c(\l)^\top(Mn+y)}{\sqrt{-\foh c(\l)^\top M c(\l)}}$, defines a countour from $\frac{c_1^\top(Mn+y)}{\sqrt{-\foh c_1^\top M c_1}}$ to $\frac{c_2^\top(Mn+y)}{\sqrt{-\foh c_2^\top M c_2}}$, so that
\begin{equation}
\left.\eE_f\left(\frac{c^\top(Mn+y)}{-\foh c^\top M c}\right)\right|_{c=c_1}^{c_2} = \int_\gamma f(u)e^{-\pi u^2}\,du.
\end{equation}

We give an upper bound for
\begin{align}
&\max_{\lambda\in [0,1]}\left|e^{-\pi \gamma(\l)^2} e\left(\foh n^\top \Omega n + n^\top z\right)\right| \nn \\
&= e^{\pi y^\top M^{-1} y}\max_{\l\in [0,1]}e^{\frac{-\pi}{-\foh c(\l)^\top M c(\l)} \left(c(\l)^\top M\left(n + M^{-1}y\right)\right)^2} e^{-\pi \left(n + M^{-1}y\right)^\top M \left(n + M^{-1}y\right)} \\
&= e^{\pi y^\top M^{-1} y}\max_{\l\in [0,1]}e^{-\pi \left(n + M^{-1}y\right)^\top A(\l) \left(n + M^{-1}y\right)} \\
&\leq e^{\pi y^\top M^{-1} y}e^{-\pi \e\left\| n + M^{-1}y\right\|^2},
\end{align}
where the vector norm is $\left\|v\right\| := v^\top v$ for $v \in \R^g$.
Thus,
\begin{align}
&\left|\left.\eE_f\left(\frac{c^\top(Mn+y)}{-\foh c^\top M c}\right)\right|_{c=c_1}^{c_2} e\left(\frac{1}{2}n^\top\Omega n + n^\top z\right)\right| \nn \\
&\leq \int_{\gamma} \abs{f(u)}e^{\pi y^\top M^{-1} y}e^{-\pi \e\left\| n + M^{-1}y\right\|^2}\,du \\
&\leq p(n) e^{-\pi \e\left\| n + M^{-1}y\right\|^2},
\end{align}
where $\log p(n) = o\left(\|n\|^2\right)$.  Thus, the terms of the series are $o\left(e^{-\frac{\pi \e}{2} \left(\|n\|^2 + \|M^{-1}y\|\right)}\right)$, and so the series converges absolutely and uniformly for $x \in \R^g$ and $y \in K$.
\end{proof}

\subsection{Transformation laws of indefinite theta functions}

We will now prove the elliptic and modular transformation laws for indefinite theta functions.  In all of these results, we assume that $z \in \C^g$, $\Omega \in \HH_g^{(1)}$,  $c_j \in \C^g$ satisfying $\ol{c_j}^\top \im(\Omega) c_j$, and $f$ is a function of one variable satisfying the conditions specified in \Cref{defn:indeftheta}.

\begin{prop}\label{prop:z}
The indefinite theta function attached to $f$ satisfies the following transformation law with respect to the $z$ variable, for $a+\Omega b \in \Z^g + \Omega\Z^g$:
\begin{equation}
\Theta^{c_1,c_2}[f](z+a+\Omega b;\Omega) = e\left(-\foh b^\top \Omega b - b^\top z\right)\Theta^{c_1,c_2}[f](z;\Omega).
\end{equation}
\end{prop}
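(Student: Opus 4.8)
The plan is to mimic the proof of \Cref{prop:zdef} for the definite theta function: substitute $z \mapsto z+a+\Omega b$ into the defining series \cref{eq:theta}, re-index the summation by $n \mapsto n-b$ (legitimate since $b \in \Z^g$), and check that each of the two factors in a term transforms in the expected way. The rearrangement of the series is justified by the absolute convergence established in \Cref{prop:conv}.

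First I would examine the incomplete-Gaussian factor. After the substitution and re-indexing, its argument becomes $\frac{c^\top\im(\Omega(n-b)+z+a+\Omega b)}{\sqrt{-\foh c^\top\im(\Omega)c}} = \frac{c^\top\im(\Omega n+z+a)}{\sqrt{-\foh c^\top\im(\Omega)c}}$; since $a \in \Z^g$ is real, $\im(\Omega n+z+a) = \im(\Omega n+z)$, so the argument, and hence the whole factor $\left.\eE_f(\cdots)\right|_{c=c_1}^{c_2}$, is unchanged. (The branch of $\sqrt{-\foh c^\top\im(\Omega)c}$ in the denominator depends only on $c$ and $\Omega$, which are held fixed, so there is no ambiguity here.) The exponential factor is then precisely the calculation already appearing in \Cref{prop:zdef}: expanding $\foh(n-b)^\top\Omega(n-b) + (n-b)^\top(z+a+\Omega b)$ and collecting terms, the cross-terms involving $\Omega b$ cancel and one is left with $\foh n^\top\Omega n + n^\top z + n^\top a - \foh b^\top\Omega b - b^\top z - b^\top a$. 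Because $n,a,b \in \Z^g$, the integer quantities $n^\top a$ and $b^\top a$ disappear under $e(\cdot)$, so the exponential factors as $e\!\left(\foh n^\top\Omega n + n^\top z\right) e\!\left(-\foh b^\top\Omega b - b^\top z\right)$. Pulling the constant $e\!\left(-\foh b^\top\Omega b - b^\top z\right)$ out of the absolutely convergent sum gives the claimed identity.

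I do not expect a genuine obstacle: this is a direct computation. The only point needing a moment's care is confirming that the incomplete-Gaussian factor is truly invariant, which rests on two facts working together—$a$ being real (so it does not affect an imaginary part) and the shift by $\Omega b$ being exactly undone by the index shift $n \mapsto n-b$—both of which are dictated by the structure of the lattice $\Z^g + \Omega\Z^g$.
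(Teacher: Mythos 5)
Your proposal is correct and follows essentially the same route as the paper: a direct substitution followed by the index shift $n \mapsto n-b$, using that $a$ is real (so it vanishes under $\im$) and that $n^\top a$, $b^\top a$ are integers (so they vanish under $e(\cdot)$). The paper performs the same calculation, merely organising the steps in a slightly different order, and your explicit appeal to \Cref{prop:conv} to justify the rearrangement is a harmless (and reasonable) addition.
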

\begin{proof}
By definition,
\begin{align}
&\Theta^{c_1,c_2}[f](z+a+\Omega b;\Omega) \nn \\
&= \sum_{n \in \Z^g} \left.\eE_f\left(\frac{c^\top \im(\Omega n + (z+a+\Omega b))}{-\foh c^\top \im(\Omega)c}\right)\right|_{c=c_1}^{c_2} e\left(Q_{\Omega}(n) + n^\top (z+a+\Omega b)\right).
\end{align}
Because $a \in \Z^g$, $\im(a)$ is zero and $e(n^\top a) = 1$, so
\begin{align}
&\Theta^{c_1,c_2}[f](z+a+\Omega b;\Omega) \nn \\
&= \sum_{n \in \Z^g} \left.\eE_f\left(\frac{c^\top \im(\Omega (n+b) + z)}{-\foh c^\top \im(\Omega)c}\right)\right|_{c=c_1}^{c_2} e\left(Q_{\Omega}(n) + n^\top (z+\Omega b)\right) \\
&= e\left(-\foh b^\top\Omega b\right)\sum_{n \in \Z^g} \left.\eE_f\left(\frac{c^\top \im(\Omega (n+b) + z)}{-\foh c^\top \im(\Omega)c}\right)\right|_{c=c_1}^{c_2} e\left(Q_{\Omega}(n+b) + n^\top z\right) \\
&= e\left(-\foh b^\top\Omega b\right)\sum_{n \in \Z^g} \left.\eE_f\left(\frac{c^\top \im(\Omega n + z)}{-\foh c^\top \im(\Omega)c}\right)\right|_{c=c_1}^{c_2} e\left(Q_{\Omega}(n) + (n-b)^\top z\right) \\
&= e\left(-\foh b^\top \Omega b - b^\top z\right)\Theta[f]^{c_1,c_2}(z;\Omega).
\end{align}
The identity is proved.
\end{proof}

\begin{prop}\label{prop:c}
The indefinite theta function satisfies the following condition with respect to the $c$ variable:
\begin{equation}
\Theta^{c_1,c_3}[f](z;\Omega) = \Theta^{c_1,c_2}[f](z;\Omega) + \Theta^{c_2,c_3}[f](z;\Omega)
\end{equation}
\end{prop}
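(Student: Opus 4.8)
The plan is to reduce the identity to the elementary additivity of the ``difference in $c$'' operator $\left.g(c)\right|_{c=c_1}^{c_2} = g(c_2) - g(c_1)$ appearing in each summand of \cref{eq:theta}, and then to justify the term-by-term manipulation using the absolute convergence established in \Cref{prop:conv}.

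First I would note that, under the standing hypotheses --- in particular $\ol{c_j}^\top \im(\Omega) c_j < 0$ for $j = 1, 2, 3$ --- all three series $\Theta^{c_1,c_3}[f](z;\Omega)$, $\Theta^{c_1,c_2}[f](z;\Omega)$, and $\Theta^{c_2,c_3}[f](z;\Omega)$ are well-defined, and each converges absolutely by \Cref{prop:conv}. For $n \in \Z^g$ write
\[
g_n(c) := \eE_f\left(\frac{c^\top \im(\Omega n + z)}{\sqrt{-\foh c^\top \im(\Omega) c}}\right)
\]
for the incomplete-Gaussian factor attached to the lattice point $n$. Then the definition of the difference operator gives the pointwise telescoping identity $g_n(c_3) - g_n(c_1) = \bigl(g_n(c_2) - g_n(c_1)\bigr) + \bigl(g_n(c_3) - g_n(c_2)\bigr)$, valid for every $n$.

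Next I would multiply this identity by $e\left(\foh n^\top \Omega n + n^\top z\right)$ and sum over $n \in \Z^g$. Since the two series obtained on the right-hand side each converge absolutely, the series of their sum equals the sum of the two series, so the right-hand side is $\Theta^{c_1,c_2}[f](z;\Omega) + \Theta^{c_2,c_3}[f](z;\Omega)$, while the left-hand side is $\Theta^{c_1,c_3}[f](z;\Omega)$ by \Cref{defn:indeftheta}. This proves the claim. There is essentially no substantive obstacle here: the only point meriting a moment's attention is checking that each of the three theta functions is individually well-defined, so that the rearrangement of the absolutely convergent series is legitimate; everything else is the formal additivity of $g \mapsto g(c_2) - g(c_1)$ in the middle argument.
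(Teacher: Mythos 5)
Your proof is correct and matches the paper's approach exactly: the paper's entire proof is ``Add the series termwise,'' which is precisely the telescoping of $g_n(c_3)-g_n(c_1) = \bigl(g_n(c_2)-g_n(c_1)\bigr)+\bigl(g_n(c_3)-g_n(c_2)\bigr)$ that you spell out. Your additional remark that the rearrangement is justified by the absolute convergence from \Cref{prop:conv} is a sensible (if routine) detail that the paper leaves implicit.
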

\begin{proof}
Add the series termwise.
\end{proof}

\begin{thm}\label{thm:omega}
The indefinite theta function satisfies the following transformation laws with respect to the $\Omega$ variable, where $A \in \GL_g(\Z)$, $B \in \M_g(\Z)$, $B=B^\top$:
\begin{enumerate}
\item[(1)]
$\Theta^{c_1,c_2}[f](z;A^\top\Omega A) = \Theta^{Ac_1,Ac_2}[f](A^{-\top}z;\Omega).$
\item[(2)]
$\Theta^{c_1,c_2}[f](z;\Omega+2B) = \Theta^{c_1,c_2}[f](z;\Omega).$
\item[(3)]
In the case where $f(u) = \one(u) = 1$, we have
\begin{equation}
\Theta^{c_1,c_2}(z;-\Omega^{-1}) = \frac{e^{\pi i z^\top \Omega z}}{\sqrt{\det(i\Omega^{-1})}}\Theta^{-\ol{\Omega}^{-1}c_1,-\ol{\Omega}^{-1}c_2}(\Omega z;\Omega).
\end{equation}
\end{enumerate}
\end{thm}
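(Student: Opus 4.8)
Parts (1) and (2) are routine. For (1), substitute $n \mapsto A^{-1}n$ (legitimate since $A \in \GL_g(\Z)$); using $\im(A^\top\Omega A) = A^\top M A$ one checks that the summand transforms so that the $\eE$-factor acquires cone $Ac$ and the exponential acquires elliptic parameter $A^{-\top}z$. For (2), $\im(\Omega+2B)=M$ leaves the $\eE$-factors untouched and $e(n^\top B n)=1$ for $n\in\Z^g$ kills the change in the quadratic exponent. The content is in (3), which I would prove by Poisson summation, in parallel with the proof of \Cref{thm:omegadef}(3) but carrying the incomplete Gaussian through the Fourier transform. The one point requiring genuine care is that the $n$-th summand of $\Theta^{c_1,c_2}(z;\Omega)$ cannot be split into single-cone pieces (those diverge), so the $\eE$-difference must be kept together, as a contour integral, throughout.

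Fix $\Omega = N+iM \in \HH_g^{(1)}$ and, after rescaling $c_2$ (harmless on both sides), arrange as in the proof of \Cref{prop:conv} that the segment $c(\lambda) := (1-\lambda)c_1 + \lambda c_2$ lies in $\{c : \ol{c}^\top M c < 0\}$ for $\lambda \in [0,1]$. With $\alpha_\lambda(x) := \frac{c(\lambda)^\top(Mx+\im z)}{\sqrt{-\foh\, c(\lambda)^\top M c(\lambda)}}$ one has $\frac{d}{d\lambda}\eE(\alpha_\lambda(x)) = \alpha_\lambda'(x)\,e^{-\pi\alpha_\lambda(x)^2}$ with $\alpha_\lambda'(x)$ affine in $x$, so the $n$-th summand equals
\[
g(x) = e\!\left(Q_\Omega(x) + x^\top z\right)\int_0^1 \alpha_\lambda'(x)\,e^{-\pi\alpha_\lambda(x)^2}\,d\lambda,
\]
which by (the proof of) \Cref{prop:conv} is a Schwartz function of $x\in\R^g$. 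Poisson summation then gives $\Theta^{c_1,c_2}(z;\Omega) = \sum_{\nu\in\Z^g}\widehat g(\nu)$ with $\widehat g(\nu) = \int_{\R^g}g(x)\,e(-x^\top\nu)\,dx$.

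The heart of the argument is the evaluation of $\widehat g(\nu)$. Put $\Omega_\lambda := \Omega - \frac{2i}{c(\lambda)^\top M c(\lambda)}Mc(\lambda)c(\lambda)^\top M$; as in the definition of $\sqrt{\det(-i\Omega)}$ in \Cref{sec:morecansqrt}, $\im\Omega_\lambda = M + M\re\!\big((-\foh c(\lambda)^\top M c(\lambda))^{-1}c(\lambda)c(\lambda)^\top\big)M$ is positive definite for every $\lambda\in[0,1]$ by \Cref{lem:posdefc}, so $\Omega_\lambda\in\HH_g$. Since $e^{-\pi\alpha_\lambda(x)^2}e(Q_\Omega(x))$ combines into $e(Q_{\Omega_\lambda}(x))$, the $x$-integral is absolutely convergent uniformly in $\lambda\in[0,1]$; I may therefore interchange $\int_0^1 d\lambda$ with $\int_{\R^g}dx$ and evaluate the inner integral $\int_{\R^g}(\text{affine in }x)\,e(Q_{\Omega_\lambda}(x)+x^\top w_\lambda)\,dx$ by completing the square and invoking \Cref{cor:intsq2} and its first derivative in the shift vector. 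The determinant $\det(-i\Omega_\lambda)$ that arises is rewritten, via part (3) of \Cref{lem:matrixids}, as $\det(-i\Omega)\cdot\frac{(\ol\Omega c(\lambda))^\top\im(-\Omega^{-1})(\ol\Omega c(\lambda))}{c(\lambda)^\top Mc(\lambda)}$; in particular the Gaussian normalization $1/\sqrt{\det(-i\Omega_\lambda)}$ produces exactly the factor $\sqrt{-\foh(\ol\Omega c(\lambda))^\top\im(-\Omega^{-1})(\ol\Omega c(\lambda))}$ appearing in the transformed $\eE$-argument, and the residual linear and quadratic terms in $w_\lambda$ are massaged using parts (1) and (2) of \Cref{lem:matrixids}.

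The final step is to recognize the outcome. I expect the $\lambda$-integrand to come out as $\frac{d}{d\lambda}$ of $\eE$ evaluated at $\widetilde\alpha_\lambda(\nu) := \frac{(\ol\Omega c(\lambda))^\top\im\big((-\Omega^{-1})(-\nu) + (-\Omega^{-1})z\big)}{\sqrt{-\foh(\ol\Omega c(\lambda))^\top\im(-\Omega^{-1})(\ol\Omega c(\lambda))}}$, times $\frac{e(-\foh z^\top\Omega^{-1}z)}{\sqrt{\det(-i\Omega)}}$, times the index-dependent factor $e\big(Q_{-\Omega^{-1}}(\nu) + \nu^\top\Omega^{-1}z\big)$; integrating back in $\lambda$ restores the $\eE$-difference with cones $\ol\Omega c_1, \ol\Omega c_2$, so that $\sum_\nu\widehat g(\nu) = \frac{e(-\foh z^\top\Omega^{-1}z)}{\sqrt{\det(-i\Omega)}}\,\Theta^{\ol\Omega c_1,\ol\Omega c_2}\big((-\Omega^{-1})z;-\Omega^{-1}\big)$ after the relabeling $\nu\mapsto-\nu$, exactly as in the definite case. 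Replacing $\Omega$ by $-\Omega^{-1}$, and using $\sqrt{\det(-i\Omega)}\sqrt{\det(i\Omega^{-1})}=1$ together with $\ol{-\Omega^{-1}} = -\ol{\Omega}^{-1}$, yields the stated transformation law. The main obstacle is precisely this last recognition: verifying that the affine-times-Gaussian output is the total $\lambda$-derivative of the transformed incomplete Gaussian. This reduces to a bookkeeping identity matching $\frac{d}{d\lambda}\big[(\ol\Omega c(\lambda))^\top\im(-\Omega^{-1})\,(\,\cdot\,)\big]$ against the linear coefficient thrown off by the Gaussian integral, and it is exactly here that the matrix identities of \Cref{lem:matrixids}, relating $M$, $\Omega^{-1}$, $\im(-\Omega^{-1})$, $c(\lambda)$, and $\ol\Omega c(\lambda)$, carry the weight.
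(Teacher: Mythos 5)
Your parts (1) and (2) match the paper's direct calculations exactly. For part (3), your global strategy coincides with the paper's --- Poisson summation, with the Fourier transform of a single term reduced to a Gaussian integral for the rank-one-perturbed matrix $\Omega_\lambda = \Omega - \frac{2i}{c^\top Mc}Mcc^\top M$, whose determinant and inverse are controlled by \Cref{lem:matrixids} --- but the device you use to handle the incomplete Gaussian is genuinely different. The paper applies $\Del_x$ (the gradient in $\re(z)$) to the Fourier-transform integral, moves the derivative onto the $\eE$-difference by integration by parts in $n$ (which replaces $\eE$ by a genuine Gaussian), evaluates, recognizes the result as $\Del_x$ of the conjectured right-hand side, and then must separately argue that the difference $C(z)$ of the two sides is constant (via $\Del_x C=0$, invariance under $z\mapsto z+\Omega^{-1}b$, and oddness $C(-z)=-C(z)$) to conclude $C\equiv 0$. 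You instead write the $\eE$-difference as $\int_0^1 \frac{d}{d\lambda}\eE(\alpha_\lambda)\,d\lambda$ along the segment $c(\lambda)=(1-\lambda)c_1+\lambda c_2$, justify Fubini by the uniform positive-definiteness of $\im\Omega_\lambda$ on $[0,1]$ (exactly the $A(\lambda)$ compactness argument from \Cref{prop:conv}), evaluate the inner Gaussian integral, and integrate back in $\lambda$; because the $\lambda$-integral has fixed endpoints, this recovers the transformed $\eE$-difference with no constant-of-integration ambiguity, so you avoid the paper's $C(z)$ argument entirely. The price is that your entire proof now rests on the final ``recognition'' that the affine-times-Gaussian output of the inner integral is the total $\lambda$-derivative of $\eE(\widetilde\alpha_\lambda(\nu))$ times the $\lambda$-independent prefactor; you correctly identify this as the crux and correctly name the identities ($M\Omega^{-1}=\ol\Omega\im(-\Omega^{-1})$, $Ma=\ol\Omega\im(-\Omega^{-1}z)$, part (3) of \Cref{lem:matrixids} for the determinant ratio) that make it work, but you do not carry out the matching of the linear coefficient, which is precisely the computation occupying the middle of the paper's proof. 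So the proposal is sound and, in the constant-determination step, cleaner than the paper's, but it is a strategy with the hardest verification deferred rather than a complete proof.
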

\begin{proof}
The proof of (1) is a direct calculation.
\begin{align}
&\Theta^{c_1,c_2}[f](z;A^\top\Omega A) \nn \\
&=  \sum_{n \in \Z^g} \left.\eE_f\left(\frac{c^\top \im(A^\top\Omega An + z)}{\sqrt{-\foh c^\top \im(\Omega) c}}\right)\right|_{c=c_1}^{c_2} e\left(\frac{1}{2}n^\top A^\top\Omega A n + n^\top z\right) \\
&=  \sum_{m \in \Z^g} \left.\eE_f\left(\frac{c^\top \im(A^\top\Omega m + z)}{\sqrt{-\foh c^\top \im(\Omega) c}}\right)\right|_{c=c_1}^{c_2} e\left(\frac{1}{2}m^\top \Omega m + \left(A^{-1}m\right)^\top z\right)
\end{align}
by the change of basis $m=An$, so
\begin{align}
&\Theta^{c_1,c_2}[f](z;A^\top\Omega A) \nn \\
&=  \sum_{m \in \Z^g} \left.\eE_f\left(\frac{(Ac)^\top \im(\Omega m + A^{-\top} z)}{\sqrt{-\foh c^\top \im(\Omega) c}}\right)\right|_{c=c_1}^{c_2} e\left(\frac{1}{2}m^\top \Omega m + m^\top A^{-\top}z\right) \\
&= \Theta^{Ac_1,Ac_2}[f](A^{-\top}z;\Omega).
\end{align}

The proof of (2) is also a direct calculation.
\begin{align}
&\Theta^{c_1,c_2}[f](z;\Omega + 2B) \nn \\
&=  \sum_{n \in \Z^g} \left.\eE_f\left(\frac{c^\top \im((\Omega+2B)n + z)}{\sqrt{-\foh c^\top \im(\Omega) c}}\right)\right|_{c=c_1}^{c_2} e\left(\frac{1}{2}n^\top (\Omega+2B) n + n^\top z\right) \\
&=  \sum_{n \in \Z^g} \left.\eE_f\left(\frac{c^\top (\im((\Omega)n + z))+2\im(B)n}{\sqrt{-\foh c^\top \im(\Omega) c}}\right)\right|_{c=c_1}^{c_2} e\left(Q_{\Omega}(n) + n^\top B n + n^\top z\right) \\
&=  \sum_{n \in \Z^g} \left.\eE_f\left(\frac{c^\top \im((\Omega)n + z)}{\sqrt{-\foh c^\top \im(\Omega) c}}\right)\right|_{c=c_1}^{c_2} e\left(Q_{\Omega}(n) + n^\top z\right) \\
&= \Theta^{c_1,c_2}[f](z;\Omega);
\end{align}
where $e\left(n^\top B n\right)=1$ because the $n^\top B n$ are integers, and $\im(B)=0$ because $B$ is a real matrix.

The proof of (3) is more complicated, and, like the proof of the analogous property for definite (Jacobi and Riemann) theta functions, uses Poisson summation.  The argument that follows is a modification of the argument that appears in the proof of Lemma 2.8 of Zwegers's thesis \cite{zwegers}.

We will find a formula for the Fourier transform of the terms of our theta series.  Most of the work is done in the calculation of the integral that follows.  In this calculation, $M = \im\Omega$, and $z = x+iy$ for $x,y\in \C^g$.  The differential operator $\Del_x$ is a row vector with entries $\frac{\partial}{\partial x_j}$, and similarly for $\Del_n$.

\begin{align}
&\Del_x\left(\int_{n \in \R^g} \left.\eE\left(\frac{c^\top M n + c^\top y}{\sqrt{-\foh c^\top M c}}\right)\right|_{c=c_1}^{c_2} e\left(Q_\Omega\left(n+\Omega^{-1}z\right)\right)\,dn\right) \nn \\
&= \int_{n \in \R^g} \left.\eE\left(\frac{c^\top M n + c^\top y}{\sqrt{-\foh c^\top M c}}\right)\right|_{c=c_1}^{c_2} \Del_x \left(e\left(Q_\Omega\left(n+\Omega^{-1}z\right)\right)\right)\,dn \\
&= \left(\int_{n \in \R^g} \left.\eE\left(\frac{c^\top M n + c^\top y}{\sqrt{-\foh c^\top M c}}\right)\right|_{c=c_1}^{c_2} \Del_n \left(e\left(Q_\Omega\left(n+\Omega^{-1}z\right)\right)\right)\,dn \right)\Omega^{-1}\\
&= \left(-\int_{n \in \R^g} \left.\Del_n \left(\eE\left(\frac{c^\top M n + c^\top y}{\sqrt{-\foh c^\top M c}}\right)\right)\right|_{c=c_1}^{c_2} e\left(Q_\Omega\left(n+\Omega^{-1}z\right)\right)\,dn\right)\Omega^{-1} \label{parts}\\
&= \left.\left(k\int_{n \in \R^g} e\left(\frac{i}{-c^\top M c}\left(c^\top \im(\Omega) n\right)^2\right) e\left(Q_{\Omega}(n+a_z)\right)\,dn\right)c^\top M\Omega^{-1}\right|_{c=c_1}^{c_2},
\end{align}
where $k := \frac{-2}{\sqrt{-\foh c^\top M c}} \in \C$, $a_z := \Omega^{-1}z-M^{-1}y \in \C^g$, and integration by parts was used in \cref{parts}.  Continuing the calculation,
\begin{align}
&\Del_x\left(\int_{n \in \R^g} \left.\eE\left(\frac{c^\top M n + c^\top y}{\sqrt{-\foh c^\top M c}}\right)\right|_{c=c_1}^{c_2} e\left(Q_\Omega\left(n+\Omega^{-1}z\right)\right)\,dn\right) \nn \\
&= \left.k\left(\int_{n \in \R^g} e\left(Q_{\Omega-\frac{2i}{c^\top M c}Mcc^\top M}(n)+a^\top \Omega n+\foh a^\top \Omega a\right)\,dn\right)c^\top M \Omega^{-1}\right|_{c=c_1}^{c_2} \\
&= \left.k e\left(-\foh a^\top \Omega\left(\Omega-\frac{2i}{c^\top M c}Mcc^\top M\right)^{-1}\Omega a+\foh a^\top \Omega a\right)I^{(c)} c^\top M \Omega^{-1}\right|_{c=c_1}^{c_2},
\end{align}
where
\begin{align}
I^{(c)} &:= \int_{n \in \R^g} e\left(Q_{\Omega-\frac{2i}{c^\top M c}Mcc^\top M}\left(n+\left(\Omega-\frac{2i}{c^\top M c}Mcc^\top M\right)^{-1}\Omega a\right)\right)\,dn \\
&= \frac{1}{\det{\sqrt{-i\left(\Omega-\frac{2i}{c^\top M c}Mcc^\top M\right)}}}
\end{align}
by \Cref{lem:intsq}.

We can check (by multiplication) that 
\begin{equation}
\left(\Omega - \frac{2i}{c^\top M c}Mcc^\top M\right)^{-1} = \Omega^{-1} - \frac{2i}{c^\top Mc - 2ic^\top M\Omega^{-1}Mc}\Omega^{-1}Mcc^\top M\Omega^{-1}.
\end{equation}
Thus,
\begin{equation}
\Omega-\Omega\left(\Omega-\frac{2i}{c^\top M c}Mcc^\top M\right)^{-1}\Omega
= \frac{2i}{c^\top Mc - 2ic^\top M\Omega^{-1}Mc}Mcc^\top M.
\end{equation}

Now compute, using \Cref{lem:matrixids},
\begin{align}
Ma &= M\Omega^{-1}z-y 
= \ol\Omega\im\left(-\Omega^{-1}\right)z-y
= \ol\Omega\left(\im\left(-\Omega^{-1}\right)z-\ol\Omega^{-1}y\right)\\
&= \frac{1}{2i}\ol\Omega\left(\left(-\Omega^{-1}+\ol\Omega^{-1}\right)z-\ol\Omega^{-1}(z-\ol{z})\right)
= \frac{1}{2i}\ol\Omega\left(-\Omega^{-1}z+\ol\Omega^{-1}\ol{z}\right)\\
&= \ol\Omega\im\left(-\Omega^{-1}z\right).
\end{align}
Also by \Cref{lem:matrixids}, $M - 2i M\Omega^{-1}M = \ol\Omega\im\left(-\Omega^{-1}\right)\ol\Omega$, and
\begin{align}
\sqrt{\det\left(-i\left(\Omega-\frac{2i}{c^\top M c}Mcc^\top M\right)\right)}
&= \sqrt{\det(-i\Omega)}\frac{\sqrt{-c^\top\ol\Omega\im\left(-\Omega^{-1}\right)\ol\Omega c}}{\sqrt{-c^\top M c}}.
\end{align}
We have now shown that
\begin{align}
&\Del_x\left(\int_{n \in \R^g} \left.\eE\left(\frac{c^\top\im\left(\Omega n+z\right)}{\sqrt{-\foh c^\top M c}}\right)\right|_{c=c_1}^{c_2} e\left(Q_\Omega\left(n+\Omega^{-1}z\right)\right)\,dn\right) \nn \\
&= \left.\frac{-2e\left(\frac{i}{(\ol\Omega c)\im(-\Omega^{-1})(\ol\Omega c)}(c^\top Ma)^2\right)}{\sqrt{\det(-i\Omega)}\sqrt{-\foh(\ol\Omega c)\im(-\Omega^{-1})(\ol\Omega c)}}
c^\top M \Omega^{-1}\right|_{c=c_1}^{c_2} \\
&= \left.\frac{-2e\left(\frac{i}{(\ol\Omega c)\im(-\Omega^{-1})(\ol\Omega c)}(c^\top Ma)^2\right)}{\sqrt{\det(-i\Omega)}\sqrt{-\foh(\ol\Omega c)\im(-\Omega^{-1})(\ol\Omega c)}}
(\Omega c)^\top \im(\Omega^{-1})\right|_{c=c_1}^{c_2} \\
&= \left.\frac{1}{\sqrt{\det(-i\Omega)}}\Del_x \eE\left(\frac{(\ol\Omega c)^\top\left(\im(-\Omega^{-1})n+\im(-\Omega^{-1}z)\right)}{\sqrt{-\foh(\ol\Omega c)\im(-\Omega^{-1})(\ol\Omega c)}}\right)\right|_{c=c_1}^{c_2}.
\end{align}
Define the following function on $\C^g$,
\begin{align}
C(z) := &\int_{n \in \R^g} \left.\eE\left(\frac{c^\top\im\left(\Omega n+z\right)}{\sqrt{-\foh c^\top \Omega c}}\right)\right|_{c=c_1}^{c_2} e\left(Q_\Omega\left(n+\Omega^{-1} z\right)\right)\,dn \nn \\
&- \frac{1}{\sqrt{\det(i\Omega)}}\left.\eE\left(\frac{(\ol\Omega c)^\top\im(-\Omega^{-1}z)}{\sqrt{-\foh(\ol\Omega c)\im(-\Omega^{-1})(\ol\Omega c)}}\right)\right|_{c=c_1}^{c_2},
\end{align}
suppressing the dependence of $C(z)$ on $\Omega$ and $c$.  We have just shown that $\Del_x C(z) = 0$, so $C(z+a) = C(z)$ for any $a \in \R^g$.  By inspection, $C(z+\Omega^{-1} b) = C(z)$ for any $b \in \R^g$.  It follow from both of these properties that $C(z)$ is constant.  Moreover, by inspection, $C(-z)=-C(z)$; therefore, $C(z)=0$.  In other words,
\begin{align}
&\int_{n \in \R^g} \left.\eE\left(\frac{c^\top\im\left(\Omega n+z\right)}{\sqrt{-\foh c^\top \Omega c}}\right)\right|_{c=c_1}^{c_2} e\left(Q_\Omega\left(n+\Omega^{-1} z\right)\right)\,dn \nn \\
&= \frac{1}{\sqrt{\det(-i\Omega)}}\left.\eE\left(\frac{(\ol\Omega c)^\top\im(-\Omega^{-1}z)}{\sqrt{-\foh(\ol\Omega c)\im(-\Omega^{-1})(\ol\Omega c)}}\right)\right|_{c=c_1}^{c_2}.\label{eq:moose}
\end{align}

Now set $g(z) := \Theta^{c_1,c_2}(z;\Omega)$, which has Fourier coefficients
\begin{equation}
c_n(g)(z) = \left.\eE\left(\frac{c^\top\im\left(\Omega n+z\right)}{\sqrt{-\foh c^\top \Omega c}}\right)\right|_{c=c_1}^{c_2}e\left(\foh n^\top \Omega n + n^\top z\right).
\end{equation}
By plugging in $z-\nu$ for $z$ in \cref{eq:moose} and multiplying both sides by $e\left(-\foh (z-\nu)^\top\Omega^{-1}(z-\nu)\right)$, we obtain the following expression for the Fourier coefficients of $\widehat{g}$:
\begin{align}
c_{\nu}\left(\widehat{g}\right)(z)
&=\int_{n \in \R^g} \left.\eE\left(\frac{c^\top\im\left(\Omega n+z\right)}{\sqrt{-\foh c^\top \Omega c}}\right)\right|_{c=c_1}^{c_2}e\left(\foh n^\top \Omega n + n^\top z\right)e(-n^\top\nu)\,dn \\
&= \frac{e\left(-\foh (z-\nu)^\top\Omega^{-1}(z-\nu)\right)}{\sqrt{\det(-i\Omega)}}\left.\eE\left(\frac{(\ol\Omega c)^\top\im(-\Omega^{-1}\nu-\Omega^{-1}z)}{\sqrt{-\foh(\ol\Omega c)\im(-\Omega^{-1})(\ol\Omega c)}}\right)\right|_{c=c_1}^{c_2} \\
&= \frac{e\left(-\foh z^\top\Omega^{-1}z\right)}{\sqrt{\det(-i\Omega)}}\left.\eE\left(\frac{(\ol\Omega c)^\top\im(-\Omega^{-1}(-\nu)-\Omega^{-1}z)}{\sqrt{-\foh(\ol\Omega c)\im(-\Omega^{-1})(\ol\Omega c)}}\right)\right|_{c=c_1}^{c_2}\\&\hspace{80pt}\cdot e\left(\foh \nu^\top(-\Omega^{-1})\nu+(-\nu)^\top(-\Omega^{-1}z)\right).
\end{align}
It follows by Poisson summation that
\begin{align}
\Theta^{c_1,c_2}(z;\Omega) 
&= \sum_{\nu \in \Z^g} c_{\nu}\left(\widehat{g}\right)(z) \\
&= \frac{e\left(-\foh z^\top\Omega^{-1}z\right)}{\sqrt{\det(-i\Omega)}}\Theta^{\ol{\Omega}c_1,\ol{\Omega}c_2}\left(-\Omega^{-1} z;-\Omega^{-1}\right).
\end{align}
We obtain (3) by replacing $\Omega$ with $-\Omega^{-1}$.
\end{proof}

\subsection{Indefinite theta functions with characteristics}

Now we restate the transformation laws using ``characteristics'' notation, which will be used when we define indefinite zeta functions in \Cref{sec:indefzeta}.
\begin{defn}\label{defn:characteristics}
Define the \textit{indefinite theta null with characteristics $p,q \in \R^g$}:
\begin{align}
\Theta^{c_1,c_2}_{p,q}[f](\Omega) &:= e^{2\pi i\left(\foh q^\top\Omega q + p^\top q\right)} \Theta^{c_1,c_2}[f]\left(p+\Omega q; \Omega\right); \\
\Theta^{c_1,c_2}_{p,q}(\Omega) &:= e^{2\pi i\left(\foh q^\top\Omega q + p^\top q\right)} \Theta^{c_1,c_2}\left(p+\Omega q; \Omega\right).
\end{align}
\end{defn}

\noindent The transformation laws for $\Theta^{c_1,c_2}_{p,q}[f](\Omega)$ follow from the transformation laws for $\Theta^{c_1,c_2}[f](z;\Omega)$.  

\begin{prop}
The elliptic transformation law for the indefinite theta null with characteristics is:
\begin{equation}
\Theta^{c_1,c_2}_{p+a,q+b}[f](\Omega) = e(a^\top(q+b))\Theta^{c_1,c_2}_{p,q}[f](\Omega).
\end{equation}
\end{prop}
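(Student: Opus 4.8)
The plan is to deduce this directly from the $z$-variable transformation law \Cref{prop:z}, in exactly the way the analogous statement for definite theta nulls follows from \Cref{prop:zdef}. Abbreviate $z_0 := p + \Omega q$. Since $a, b \in \Z^g$ we have $p + a + \Omega(q+b) = z_0 + (a + \Omega b)$ with $a + \Omega b \in \Z^g + \Omega\Z^g$, so unfolding \Cref{defn:characteristics} gives
\begin{equation*}
\Theta^{c_1,c_2}_{p+a,q+b}[f](\Omega) = e\!\left(\foh (q+b)^\top \Omega (q+b) + (p+a)^\top (q+b)\right) \Theta^{c_1,c_2}[f]\!\left(z_0 + a + \Omega b;\Omega\right).
\end{equation*}

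First I would invoke \Cref{prop:z} with the shift $a + \Omega b$ to rewrite $\Theta^{c_1,c_2}[f](z_0 + a + \Omega b;\Omega)$ as $e\!\left(-\foh b^\top \Omega b - b^\top z_0\right)\Theta^{c_1,c_2}[f](z_0;\Omega)$, and then I would pull out the factor $e\!\left(\foh q^\top\Omega q + p^\top q\right)$ so as to recognise $e\!\left(\foh q^\top\Omega q + p^\top q\right)\Theta^{c_1,c_2}[f](z_0;\Omega) = \Theta^{c_1,c_2}_{p,q}[f](\Omega)$, again by \Cref{defn:characteristics}. After these substitutions the entire content of the proof is the scalar identity
\begin{equation*}
\foh (q+b)^\top\Omega(q+b) + (p+a)^\top(q+b) - \foh b^\top\Omega b - b^\top(p+\Omega q) - \foh q^\top\Omega q - p^\top q = a^\top(q+b).
\end{equation*}
I would check this by expanding $\foh(q+b)^\top\Omega(q+b) = \foh q^\top\Omega q + b^\top\Omega q + \foh b^\top\Omega b$, using the symmetry of $\Omega$ to merge $q^\top\Omega b$ with $b^\top\Omega q$; then the $\foh b^\top\Omega b$ and $b^\top\Omega q$ contributions cancel against the explicit correction terms coming from \Cref{prop:z}, and after cancelling $p^\top b$ against $b^\top p$ what survives is $a^\top q + a^\top b = a^\top(q+b)$.

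I do not expect a genuine obstacle: convergence of the defining series is already secured by \Cref{prop:conv}, and \Cref{prop:z} holds for every admissible test function $f$, so the statement is a one-line corollary of \Cref{prop:z} plus the elementary rearrangement above. The only point requiring care—the ``hard part,'' such as it is—is tracking the quadratic exponent without dropping a sign or a factor of $\foh$; note also that $e(\cdot)$ is only $\Z$-periodic, though in fact the scalar identity holds on the nose, so periodicity is not even needed.
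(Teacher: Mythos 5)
Your proposal is correct and matches the paper's route exactly: the paper states that these characteristic-notation laws ``follow from the transformation laws for $\Theta^{c_1,c_2}[f](z;\Omega)$,'' i.e.\ precisely the application of \Cref{prop:z} with the shift $a+\Omega b$ followed by the bookkeeping of exponents that you carry out. Your scalar identity checks out (the $\foh b^\top\Omega b$, $b^\top\Omega q$, and $p^\top b$ terms cancel as you describe, leaving $a^\top(q+b)$), so nothing further is needed.
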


\begin{prop}
The modular transformation laws for the indefinite theta null with characteristics are as follows.
\begin{itemize}
\item[(1)] $\Theta^{c_1,c_2}_{p,q}[f](A^\top \Omega A) = \Theta^{Ac_1,Ac_2}_{A^{-\top}p,Aq}[f](\Omega)$.
\item[(2)] $\Theta^{c_1,c_2}_{p,q}[f](\Omega+2B) = e(-q^\top Bq)\Theta^{c_1,c_2}_{p+2Bq,q}[f](\Omega)$.
\item[(3)] $\Theta^{c_1,c_2}_{p,q}(-\Omega^{-1}) = \frac{e(p^\top q)}{\sqrt{\det(i\Omega^{-1})}} \Theta^{-\ol\Omega^{-1} c_1,-\ol\Omega^{-1}c_2}_{-q,p}(\Omega)$.
\end{itemize}
\end{prop}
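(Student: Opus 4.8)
The plan is to deduce all three modular transformation laws for $\Theta^{c_1,c_2}_{p,q}[f](\Omega)$ directly from the corresponding parts of \Cref{thm:omega} by substituting $z = p + \Omega q$ (with $\Omega$ replaced by the appropriate modular transform), and then reconciling the exponential prefactors introduced in \Cref{defn:characteristics}. This mirrors the passage from \Cref{thm:omegadef} to its ``characteristics'' reformulation in the definite case, and also the way the elliptic law for $\Theta^{c_1,c_2}_{p,q}[f]$ follows from \Cref{prop:z}; the cone parameters $c_1,c_2$ are simply carried along through each step. One remarks at the outset that (1) and (2) hold for any admissible $f$, while (3) requires $f = \one$ because \Cref{thm:omega}(3) does.

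For (1), apply \Cref{thm:omega}(1) with $z$ replaced by $p + A^\top\Omega A\, q$, using $A^{-\top}(p + A^\top\Omega A\, q) = A^{-\top}p + \Omega(Aq)$. Comparing the result with $\Theta^{Ac_1,Ac_2}_{A^{-\top}p,Aq}[f](\Omega)$ expanded via \Cref{defn:characteristics}, the two prefactors coincide because $(Aq)^\top\Omega(Aq) = q^\top A^\top\Omega A\, q$ and $(A^{-\top}p)^\top(Aq) = p^\top q$. For (2), apply \Cref{thm:omega}(2), which leaves the elliptic argument untouched, and write $p + (\Omega + 2B)q = (p + 2Bq) + \Omega q$; matching the prefactor of $\Theta^{c_1,c_2}_{p+2Bq,q}[f](\Omega)$, the leftover exponent is $\tfrac12 q^\top(\Omega+2B)q - \tfrac12 q^\top\Omega q - (2Bq)^\top q = q^\top B q - 2q^\top B q = -q^\top B q$, where $(2Bq)^\top q = 2q^\top B q$ uses $B = B^\top$; this produces the factor $e(-q^\top B q)$. (Note that no elliptic transformation is needed here, as the shift $2Bq$ is absorbed into the characteristic rather than into the lattice.)

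For (3), put $z = p - \Omega^{-1}q$, so $\Omega z = \Omega p - q$, and expand $z^\top\Omega z = p^\top\Omega p - 2p^\top q + q^\top\Omega^{-1}q$ using $\Omega^{-\top} = \Omega^{-1}$ (symmetry of $\Omega$). Substituting into \Cref{thm:omega}(3) applied at argument $z$, the factor $e(-\tfrac12 q^\top\Omega^{-1}q)$ coming from the characteristic prefactor on the left cancels the $e(\tfrac12 q^\top\Omega^{-1}q)$ contributed by $e^{\pi i z^\top\Omega z}$, and the two copies of $p^\top q$ cancel as well, leaving $\tfrac{1}{\sqrt{\det(i\Omega^{-1})}}\,e^{\pi i p^\top\Omega p}\,\Theta^{c_1',c_2'}(\Omega p - q;\Omega)$ with $c_j' := -\ol\Omega^{-1}c_j$. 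Finally, \Cref{defn:characteristics} with characteristic $(-q,p)$ gives $\Theta^{c_1',c_2'}_{-q,p}(\Omega) = e\!\left(\tfrac12 p^\top\Omega p + (-q)^\top p\right)\Theta^{c_1',c_2'}(\Omega p - q;\Omega) = e^{\pi i p^\top\Omega p}\,e(-p^\top q)\,\Theta^{c_1',c_2'}(\Omega p - q;\Omega)$, so that $e^{\pi i p^\top\Omega p}\,\Theta^{c_1',c_2'}(\Omega p - q;\Omega) = e(p^\top q)\,\Theta^{c_1',c_2'}_{-q,p}(\Omega)$, which is exactly the claimed identity.

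I do not expect a genuine obstacle: all the analytic content lives in \Cref{thm:omega}, and what remains is exponent bookkeeping entirely parallel to the definite case. The one point demanding care is (3), where one must track whether the prefactor from \Cref{defn:characteristics} is evaluated at $\Omega$ or at $-\Omega^{-1}$, and invoke $\Omega = \Omega^\top$ at the right moments; once $z^\top\Omega z$ is expanded correctly, the spurious $q^\top\Omega^{-1}q$ and $p^\top q$ terms cancel exactly as they must for the stated formula to hold.
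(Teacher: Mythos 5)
Your proposal is correct and is exactly the route the paper intends: the paper gives no explicit proof, stating only that these laws ``follow from the transformation laws for $\Theta^{c_1,c_2}[f](z;\Omega)$,'' and your computation supplies precisely that bookkeeping. All three exponent calculations check out, including the cancellation of the $q^\top\Omega^{-1}q$ and $p^\top q$ terms in part (3).
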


\subsection{$P$-stable indefinite theta functions}

We now introduce a special symmetry that may be enjoyed by the parameters $(c_1,c_2,z,\Omega)$, which we call \textit{$P$-stability}.  In this section, $c_1,c_2$ will always be real vectors.

\begin{defn}\label{defn:stability}
Let $P \in \GL_g(\Z)$ be fixed.  Let $z \in \C^g$, $\Omega \in \HH_g^{(1)}$, $c_1, c_2 \in \R^g$ satisfying $c_j^\top \im(\Omega) c_j < 0$.  The quadruple $(c_1,c_2,z,\Omega)$ is called \textit{$P$-stable} if $P^\top \Omega P = \Omega$, $Pc_1 = c_2$, and $P^\top z \con z \Mod{\Z^2}$.
\end{defn}

Remarkably, $P$-stable indefinite theta functions attached to $f(u) = |u|^r$ turn out to be independent of $r$ (up to a constant factor).  
\begin{thm}[$P$-Stability Theorem]\label{thm:stability}
Set $\Theta^{c_1,c_2}_r(z;\Omega) := \frac{\pi^{\frac{r+1}{2}}}{\Gamma\left(\frac{r+1}{2}\right)}\Theta^{c_1,c_2}[f](z;\Omega)$ when $f(u) = \abs{u}^r$ for $\re(r)>-1$.
If $(c_1,c_2,z,\Omega)$ is $P$-stable, then $\Theta_r^{c_1,c_2}(z;\Omega)$ is independent of $r$.
\end{thm}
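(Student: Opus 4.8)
The plan is to exploit the $P$-stability hypothesis to rewrite $\Theta^{c_1,c_2}[f](z;\Omega)$ as a single-variable integral and then show that the $r$-dependence disappears after normalisation. First I would use \Cref{prop:c} to write $\Theta^{c_1,c_2}[f] = \Theta^{c_1,c_2}[f]$ and note that, since $Pc_1 = c_2$, the terms of the $c_1$-piece and the $c_2$-piece of the series are related by the change of variables $n \mapsto Pn$ (using $P^\top\Omega P = \Omega$ and $P^\top z \equiv z \pmod{\Z^g}$, so that $e(Q_\Omega(n) + n^\top z)$ is preserved and the lattice $\Z^g$ is permuted). Concretely, I expect that
\begin{equation}
\Theta^{c_1,c_2}[f](z;\Omega) = \sum_{n \in \Z^g}\left(\eE_f\!\left(\beta_{c_2}(n)\right) - \eE_f\!\left(\beta_{c_1}(n)\right)\right)e\!\left(\tfrac12 n^\top\Omega n + n^\top z\right)
\end{equation}
where $\beta_c(n) = \dfrac{c^\top\im(\Omega n + z)}{\sqrt{-\foh c^\top Mc}}$, and the $P$-symmetry identifies the $c_2$-sum over $n$ with the $c_1$-sum over $Pn$; combining, the net effect is that only the \emph{difference} $\eE_f(\beta_{c_2}(n)) - \eE_f(\beta_{c_1}(n))$ — which, for $f = |u|^r$, is $\int_{\beta_{c_1}(n)}^{\beta_{c_2}(n)} |u|^r e^{-\pi u^2}\,du$ along the real axis (the $c_j$ are real here, so the $\beta_{c_j}(n)$ are real when $z$ is real, and by continuation in $z$ one reduces to that case) — survives.

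The second and main step is to handle this incomplete Gaussian integral of $|u|^r$. The key computation is that for real endpoints $a,b$,
\begin{equation}
\frac{\pi^{\frac{r+1}{2}}}{\Gamma\!\left(\frac{r+1}{2}\right)}\int_a^b |u|^r e^{-\pi u^2}\,du
\end{equation}
is \emph{independent of $r$} precisely when $a$ and $b$ are related by the symmetry — i.e., when the $P$-action sends the interval defining one summand's contribution to minus itself, so that $b = -a$ modulo the pairing, or more precisely when summing over the $P$-orbit the relevant integral becomes $\int_{-a}^{a}$ or a full-line integral. Indeed $\dfrac{\pi^{\frac{r+1}{2}}}{\Gamma(\frac{r+1}{2})}\int_{-\infty}^{\infty} |u|^r e^{-\pi u^2}\,du = 1$ for all $r$ with $\re(r) > -1$, by the substitution $t = \pi u^2$ and the definition of $\Gamma$. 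So the strategy is: group the lattice sum into $P$-orbits; on each orbit, the contributions of the various $n$ in the orbit telescope so that the $\eE_f$ factors combine into an integral over the \emph{entire} line $(-\infty,\infty)$ (for a dense/generic orbit) or an appropriate symmetric configuration; apply the $\Gamma$-normalised full-line identity; and conclude that each orbit contributes a quantity independent of $r$.

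The hard part will be step two: making precise the claim that $P$-orbits on $\Z^g$, under the map $n \mapsto Pn$, partition the relevant quantities $\beta_{c}(n)$ so that the telescoping sum of incomplete Gaussians collapses to a \emph{complete} Gaussian integral (times the $r$-independent normalisation), and checking convergence/rearrangement is legitimate (which is justified by \Cref{prop:conv}, since the series converges absolutely and uniformly). One must also deal carefully with the finitely many $n$ fixed by $P$ (where the orbit is a single point and $\beta_{c_1}(n) = \beta_{c_2}(n)$, so the summand simply vanishes) and with the branch/normalisation of $\sqrt{-\foh c^\top Mc}$, which is harmless here because the $c_j$ are real. Once the combinatorics of the $P$-action is set up correctly, the analytic input is just the single elementary identity $\frac{\pi^{\frac{r+1}{2}}}{\Gamma(\frac{r+1}{2})}\int_{-\infty}^{\infty}|u|^r e^{-\pi u^2}\,du = 1$, and the theorem follows.
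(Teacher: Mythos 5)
Your orbit-telescoping strategy is genuinely different from the paper's proof, and it could in principle be completed, but as written it has a real gap exactly at the step you defer. Write $\beta_k := \beta_{c_1}(P^k n_0)$ and note $\beta_{c_2}(P^k n_0) = \beta_{c_1}(P^{k-1}n_0)$, so the sum over an infinite $P$-orbit telescopes to $\lim_{k\to-\infty}\eE_f(\beta_k) - \lim_{k\to+\infty}\eE_f(\beta_k)$; both limits exist by the Cauchy criterion, since the orbit sub-sum converges absolutely and the factor $e\left(\foh n^\top\Omega n + n^\top z\right)$ is literally constant along the orbit. Your claim that this difference equals $\mp\int_{-\infty}^{\infty}|u|^r e^{-\pi u^2}\,du$ --- the only situation in which the normalisation $\pi^{(r+1)/2}/\Gamma\left(\frac{r+1}{2}\right)$ removes the $r$-dependence --- requires showing that $\beta_k \to \pm\infty$ as $k \to \pm\infty$ (in one order or the other). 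If $\beta_k$ tended to a \emph{finite} limit, the orbit would contribute a genuinely $r$-dependent incomplete integral and your argument would collapse. Proving $|\beta_k|\to\infty$ is the actual content, and it needs a geometric input you do not supply: because $Q_M(c_1)<0$ and $Q_M$ has signature $(g-1,1)$, the set $\{v : Q_M(v)=Q_M(n_0),\ |c_1^\top M v|\le C\}$ is compact, hence contains only finitely many points of a discrete infinite orbit, so $|c_1^\top M P^k n_0|\to\infty$. That argument, together with the disposal of finite orbits (cyclic telescoping gives $0$) and of $n$ fixed by $P$, is what the proof actually rests on; the statement that ``the analytic input is just the elementary identity'' undersells where the work lies. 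Your aside about ``$b=-a$ modulo the pairing'' is also not the operative mechanism: nothing forces the two endpoints to be negatives of each other, only that both escape to infinity.

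For comparison, the paper evaluates nothing. It integrates by parts to get $\eE_r(\alpha) = \frac{1}{2\pi}\left(-\sgn(\alpha)|\alpha|^{r-1}e^{-\pi\alpha^2} + (r-1)\eE_{r-2}(\alpha)\right)$, hence a recursion $\Theta_r^{c_1,c_2} = \Theta_{r-2}^{c_1,c_2} - \frac{\pi^{r/2}}{\Gamma\left(\frac{r+1}{2}\right)}S$, where $S$ is a boundary series whose $c_1$- and $c_2$-halves each converge absolutely and are exchanged by the single substitution $n\mapsto Pn$; thus $S=0$ and $\Theta_r = \Theta_{r-2}$. Constancy in $r$ then follows from boundedness on vertical strips plus Liouville. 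So the paper uses $P$-stability only through one substitution on two absolutely convergent series, with no orbit dynamics. Your approach, if completed with the compactness argument above, would prove something stronger --- an explicit $\sgn$-difference evaluation of $\Theta_r^{c_1,c_2}$, essentially the theta-level analogue of \Cref{thm:zetastable} --- but as it stands the central step is asserted rather than proved.
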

\begin{proof}
Let $M=\im(\Omega)$ and $y=\im(z)$.
If $\alpha \in \R$ and $\re(r)>1$, then
\begin{align}
\eE_r(\alpha) 
&= \int_{0}^\alpha |u|^r e^{-\pi u^2}\,du \\
&= \sgn(\alpha)\int_{0}^{\abs{\alpha}} u^r e^{-\pi u^2}\,du \\
&= -\frac{\sgn(\alpha)}{2\pi}\int_{0}^{\abs{\alpha}} u^{r-1} \,d\left(e^{-\pi u^2}\right) \\
&= -\frac{\sgn(\alpha)}{2\pi}\left(\left.u^{r-1} e^{-\pi u^2}\right|_{u=0}^{\abs{\alpha}}-\int_{0}^{\abs{\alpha}}e^{-\pi u^2} \,d\left(u^{r-1}\right)\right) \\
&= -\frac{\sgn(\alpha)}{2\pi}\left(\abs{\alpha}^{r-1} e^{-\pi \alpha^2}-(r-1)\int_{0}^{\abs{\alpha}}u^{r-2}e^{-\pi u^2} \,du\right) \\
&= \frac{1}{2\pi}\left(-\sgn(\alpha)\abs{\alpha}^{r-1} e^{-\pi \alpha^2}+(r-1)\eE_{r-2}(\alpha)\right).
\end{align}
Let $\a_n^c = \frac{c^\top\im(\Omega n+z)}{\sqrt{-Q_M(c)}}$.  
Set $A^c := M+M\re\left(\left(-Q_M(c)\right)^{-1}cc^\top\right) M$, so that $A^{c_1}$ and $A^{c_2}$ are positive definite, as in the proof of \Cref{prop:conv}. 
Thus,
\begin{equation}
\Theta_r^{c_1,c_2}(z;\Omega) = -\frac{\pi^{r/2}}{\Gamma\left(\frac{r+1}{2}\right)}S + \Theta_{r-2}^{c_1,c_2}(z;\Omega),
\end{equation}
where
\begin{align}
S &= \sum_{n \in \Z^g} \left.\sgn\left(\a_n^c\right)\abs{\a_n^c}^{r-1} \exp\left(-\pi\left(\a_n^c\right)^2\right)\right|_{c=c_1}^{c_2} e\left(\frac{1}{2}n^\top\Omega n + n^\top z\right).
\end{align}
The $c_1$ and $c_2$ terms in this sum decay exponentially, because
\begin{equation}
\abs{\exp\left(-\pi\left(\a_n^c\right)^2\right) e\left(\frac{1}{2}n^\top\Omega n + n^\top z\right)}
= \exp\left(-2\pi Q_{A^{c}}\left(n+M^{-1}y)\right)\right).
\end{equation}
Thus, the series may be split as a sum of two series:
\begin{align}
S
&= \sum_{n \in \Z^g} \sgn\left(\a_n^{c_2}\right)\abs{\a_n^{c_2}}^{r-1}\exp\left(-\pi\left(\a_n^{c_2}\right)^2\right) e\left(\frac{1}{2}n^\top\Omega n + n^\top z\right) \nn \\
&\hspace{10pt}-
\sum_{n \in \Z^g} \sgn\left(\a_n^{c_1}\right)\abs{\a_n^{c_1}}^{r-1}\exp\left(-\pi\left(\a_n^{c_1}\right)^2\right) e\left(\frac{1}{2}n^\top\Omega n + n^\top z\right).
\end{align}
Now we use the $P$-symmetry to show that these two series are, in fact, equal. 
Note that $\im(P^\top z) = \im(z)$ because $P^\top z \con z \Mod{\Z^2}$, so
\begin{align}
\a_{Pn}(c_2) 
&= \frac{(Pc_1)^\top\im(\Omega Pn+z)}{\sqrt{-Q_M(Pc_1)}} \\
&= \frac{c_1^\top\im(P^\top\Omega Pn+P^\top z)}{\sqrt{-Q_{P^\top MP}(c_1)}} \\
&= \frac{c_1^\top\im(\Omega n+z)}{\sqrt{-Q_{M}(c_1)}} \\
&= \alpha_n(c_1).
\end{align}
Moreover, 
\begin{align}
\frac{1}{2}(Pn)^\top \Omega (Pn) + (Pn)^\top z &= \frac{1}{2}n^\top (P^\top \Omega P)n + n^\top (P^\top z) \\ &\con \frac{1}{2}n^\top \Omega n + n^\top z \Mod{\Z^2}.
\end{align}
Thus, we may substitute $Pn$ for $n$ in the first series (involving $c_2$) to obtain the second (involving $c_1$).

We've now shown the periodicity relation
\begin{equation}
\Theta_r^{c_1,c_2}(z;\Omega) = \Theta_{r-2}^{c_1,c_2}(z;\Omega).
\end{equation}
Note that this identity provides an analytic continuation of $\Theta_r^{c_1,c_2}(z,\Omega)$ to the entire $r$-plane.  To show that it is constant in $r$, we will show that it is bounded on vertical strips in the $r$-plane.  As in the proof of \Cref{prop:conv}, bound $(x,\l) \mapsto x^\top A(\l) x$, considered as a positive real-valued continuous function on the product of the unit ball $\{x^\top x = 1\}$ and the interval $[0,1]$, from below by its global minimum $\e > 0$.  Thus,
\begin{align}
&\left|\left.\eE_r\left(\frac{c^\top(Mn+y)}{\sqrt{-\foh c^\top M c}}\right)\right|_{c=c_1}^{c_2} e\left(\frac{1}{2}n^\top\Omega n + n^\top z\right)\right| \nn \\
&\leq \abs{\int_{\frac{c_1^\top\im(\Omega n+z)}{\sqrt{-\foh c_1^\top \im(\Omega) c_1}}}^{\frac{c_2^\top\im(\Omega n+z)}{\sqrt{-\foh c_2^\top \im(\Omega) c_2}}} \abs{u}^{\re(r)}\,du} e^{\pi y^\top M^{-1} y}e^{-\pi \e\left\| n + M^{-1}y\right\|^2}\\
&\leq p_{\re(r)}(n) e^{-\pi \e\left\| n + M^{-1}y\right\|^2},
\end{align}
where $p_{\re(r)}(n)$ is a polynomial independent of $\im(r)$.  Hence, $\Theta_r^{c_1,c_2}(z,\Omega)$ is bounded on the line $\re(r) = \s$ by $\sum_{n\in\Z^g} p_{\s}(n) e^{-\pi \e\left\| n + M^{-1}y\right\|^2}$. It follows that it is bounded on any vertical strip.  Along with periodicity, this implies that $\Theta_r^{c_1,c_2}(z,\Omega)$ as a function of $r$ is bounded and entire, thus constant.
\end{proof}

\section{Definite zeta functions and real analytic Eisenstein series}\label{sec:defzeta}

We will now consider \textit{definite zeta functions}---the Mellin transforms of definite theta functions---in preparation for studying the Mellin transforms of indefinite theta functions in the next section.  In dimension 2, definite zeta functions specialise to real analytic Eisenstein series for the congruence subgroup $\Gamma_1(N)$ (which specialise further to ray class zeta functions of imaginary quadratic ideal classes).

\subsection{Definition and Dirichlet series expansion}

We define the definite zeta function as a Mellin transform of the indefinite theta null with real characteristics.
\begin{defn}\label{defn:defzeta}
Let $\Omega \in \HH^{(0)}_g$ and $p,q \in \R^g$.  The \textit{definite zeta function} is
\begin{equation}
\widehat{\zeta}_{p,q}(\Omega,s) := 
\left\{
\begin{array}{ll}
\int_0^\infty \Theta_{p,q}(t\Omega)t^s\frac{dt}{t} & \mbox{if $q \nin \Z^g$}, \\
\int_0^\infty \left(\Theta_{p,q}(t\Omega)-1\right)t^s\frac{dt}{t} & \mbox{if $q \in \Z^g$}.
\end{array}
\right.
\end{equation}
\end{defn}
By direct calculation, $\widehat{\zeta}_{p,q}(\Omega,s)$ has a Dirichlet series expansion.
\begin{align}
\widehat{\zeta}_{p,q}(\Omega,s) &= (2\pi)^{-s} \Gamma(s)\sum_{\substack{n \in \Z^g \\  n \neq -q}} e(p^\top(n+q))Q_{-i\Omega}(n+q)^{-s},
\end{align}
where $Q_{-i \Omega}(n+q)^{-s}$ is defined using the standard branch of the logarithm (with a branch cut on the negative real axis).

\subsection{Specialisation to real analytic Eisenstein series}

Now, suppose $g=2$, $\Omega = i M$ for some real symmetric, positive definite matrix $M$, $p=\coltwo{0}{0}$, and $q \nin \Z^2$.  Then the definite zeta function may be written as follows.
\begin{align}
\widehat{\zeta}_{0,q}(\Omega,s) 
&= (2\pi)^{-s} \Gamma(s)\sum_{n \in \Z^2} Q_{M}(n+q)^{-s} \\
&= (2\pi)^{-s} \Gamma(s)\sum_{n \in \Z^2+q} Q_{M}(n)^{-s}.
\end{align}
Up to scaling, $M$ is of the form $M = \frac{1}{\im(\t)}\mattwo{1}{\re(\t)}{\re(\t)}{\t\ol{\t}}$ for some $\tau \in \HH$; scaling $M$ by $\l \in \R$ simply scales $\widehat{\zeta}_{p,q}(\Omega,s)$ by $\l^{-s}$, so we assume $M$ is of this form.  Write 
\begin{align}
Q_{M}\coltwo{n_1}{n_2} 
&= \frac{1}{2\im(\tau)}\left(n_1^2+2\re{\t}n_1n_2+\t\ol{\t}n_2^2\right) \\
&= \frac{1}{2\im(\tau)}\abs{n_1+n_2\t}^2
\end{align}
Thus,
\begin{align}
\widehat{\zeta}_{0,q}(\Omega,s) 
&= \pi^{-s} \Gamma(s)\im(\t)^s\sum_{\scriptsize{\coltwo{n_1}{n_2} \in \Z^2+q}} \abs{n_1\tau + n_2}^{-2s}.
\end{align}
If $q \in \Q^2$ and the gcd of the denominators of the entries of $q$ is $N$, this is essentially an Eisenstein series of associated to $\Gamma_1(N)$.  
Choose $k,\ell \in \Z$ such that $q \con \coltwo{k/N}{\ell/N} \Mod{1}$ and $\gcd(k,\ell)=1$.  Then, we have
\begin{align}
\widehat{\zeta}_{0,q}(\Omega,s) 
&= (\pi N)^{-s} \Gamma(s)\im(\t)^s\sum_{\substack{c \con k \Mod{N} \\ d \con \ell \Mod{N}}} \abs{c\tau + d}^{-2s}. \label{eq:arf1}
\end{align}
The Eisenstein series associated to the cusp $\infty$ of $\Gamma_1(N)$ is
\begin{align}
E^\infty_{\Gamma_1(N)}(\t,s)
&= \sum_{\gamma \in \Gamma_1^\infty(N) \backslash \Gamma_1(N)} \im(\gamma \cdot \tau)^s \\
&= \im(\t)^s\sum_{\substack{c \con 0 \Mod{N} \\ d \con 1 \Mod{N} \\ \gcd(c,d)=1}} \abs{c\tau+d}^{-2s} \\
&= \im(\t)^s\frac{\prod_{p|N} (1-p^{-s})}{\zeta(s)}\sum_{\substack{c \con 0 \Mod{N} \\ d \con 1 \Mod{N}}} \abs{c\tau+d}^{-2s}.
\end{align}
Here, $\Gamma_1^\infty(N)$ is the stabiliser of $\infty$ under the fractional linear transformation action; that is, $\Gamma_1^\infty(N) = \left\{\pm\mattwo{1}{n}{0}{1} : n \in \Z\right\}$.

Choose $u,v \in \Z$ such that $\det\mattwo{u}{v}{k}{\ell}=1$.  We have
\begin{align}
\frac{\zeta(s)}{\prod_{p|N} (1-p^{-s})}
E^\infty_{\Gamma_1(N)}\left(\frac{u\tau+v}{k\tau+\ell},s\right)
&= \im\left(\frac{u\tau+v}{k\tau+\ell}\right)^s\sum_{\substack{c \con 0 \Mod{N} \\ d \con 1 \Mod{N}}} \abs{c\left(\frac{u\tau+v}{k\tau+\ell}\right)+d}^{-2s} \\
&= \im\left(\tau\right)^s\sum_{\substack{c \con 0 \Mod{N} \\ d \con 1 \Mod{N}}} \abs{(cu+dk)\tau+(cv+d\ell)}^{-2s} \\
&= \im\left(\tau\right)^s\sum_{\substack{c' \con k \Mod{N} \\ d' \con \ell \Mod{N}}} \abs{c'\tau+d'}^{-2s}. \label{eq:arf2}
\end{align}
Combining \cref{eq:arf1} and \cref{eq:arf2}, we see that
\begin{equation}
\widehat{\zeta}_{0,q}(\Omega,s) 
= \frac{(\pi N)^{-s} \Gamma(s) \zeta(s)}{\prod_{p|N} (1-p^{-s})} E^\infty_{\Gamma_1(N)}\left(\frac{u\tau+v}{k\tau+\ell},s\right).
\end{equation}

\section{Indefinite zeta functions: definition, analytic continuation, and functional equation}\label{sec:indefzeta}

We now turn our attention to the primary objects of interest, \textit{(completed) indefinite zeta functions}---the Mellin transforms of indefinite theta functions. We will generally omit the word ``completed'' when discussing these functions.

As usual, let $\Omega \in \HH^{(1)}_g$, $p,q \in \R^g$, $c_1, c_2 \in \C^g$, $\ol{c_1}^\top M c_1 < 0$, $\ol{c_2}^\top M c_2 < 0$.

We define the indefinite zeta function using a Mellin transform of the indefinite theta function with characteristics.
\begin{defn}\label{defn:indefzeta}
The (completed) indefinite zeta function is
\begin{equation}
\widehat{\zeta}^{c_1,c_2}_{p,q}(\Omega,s) := \int_0^\infty \Theta^{c_1,c_2}_{p,q}(t\Omega)t^s\frac{dt}{t}. \label{eq:indefzdef}
\end{equation}
\end{defn}
The terminology ``zeta function'' here should not be taken to mean that $\widehat{\zeta}^{c_1,c_2}_{p,q}(\Omega,s)$ has a Dirichlet series---it (usually) doesn't (although it does have an analogous series expansion involving hypergeometric functions, as we'll see in \Cref{sec:3series}).  Rather, we think of it as a zeta function by analogy with the definite case, and (as we'll see) because is sometimes specialises to certain classical zeta functions.

By defining the zeta function as a Mellin transform, we've set things up so that a proof of the functional equation \Cref{thm:zetafun} is a natural first step.  Analytic continuation and a functional equation will follow from \Cref{thm:omega} by standard techniques.  Our analytic continuation also gives an expression that converges quickly everywhere and is therefore useful for numerical computation, unlike \cref{eq:indefzdef} or the series expansion in \Cref{sec:3series}.
\begin{thm:zetafun}
The function $\widehat{\zeta}^{c_1,c_2}_{p,q}(\Omega,s)$ may be analytically continued to an entire function on $\C$.
It satisfies the functional equation 
\begin{equation}
\widehat{\zeta}^{c_1,c_2}_{p,q}\left(\Omega,\frac{g}{2}-s\right) =  \frac{e(p^\top q)}{\sqrt{\det(-i\Omega)}} \widehat{\zeta}^{\ol\Omega c_1,\ol \Omega c_2}_{-q,p}\left(-\Omega^{-1},s\right).
\end{equation}
\end{thm:zetafun}
\begin{proof}
Fix $r > 0$, and split up the Mellin transform integral into two pieces,  
\begin{align}
\widehat{\zeta}^{c_1,c_2}_{p,q}\left(\Omega,s\right) &= \int_0^\infty \Theta^{c_1,c_2}_{p,q}(t \Omega)t^s \frac{dt}{t}\\
                                                &= \int_r^\infty\Theta^{c_1,c_2}_{p,q}(t \Omega)t^s \frac{dt}{t} + \int_0^r \Theta^{c_1,c_2}_{p,q}(t \Omega)t^s \frac{dt}{t}.
\end{align}
Replacing $t$ by $t^{-1}$, and then using part (3) of \Cref{thm:omega}, the second integral is
\begin{align}
\int_0^r \Theta^{c_1,c_2}_{p,q}(t \Omega)t^s \frac{dt}{t} &= \int_{r^{-1}}^\infty \Theta^{c_1,c_2}_{p,q}(t^{-1} \Omega)t^{-s} \frac{dt}{t}\\
                                                  &= \int_{r^{-1}}^\infty \frac{e(p^\top q)}{\sqrt{\det(-it \Omega)}} \Theta^{t\ol\Omega c_1,t\ol\Omega c_2}_{-q,p}(-(t^{-1}\Omega)^{-1}) t^{-s} \frac{dt}{t}\\
                                                  &= \frac{e(p^\top q)}{\sqrt{\det(-i \Omega)}} \int_{r^{-1}}^\infty \Theta^{\ol\Omega c_1,\ol\Omega c_2}_{-q,p}(t(-\Omega^{-1}))t^{\frac{g}{2}-s} \frac{dt}{t}.
\end{align}
(Recall that scaling the $c_j$ does not affect the value of $\Theta^{c_1,c_2}_{p,q}(\Omega)$.)  Putting it all together, we have
\begin{align}\label{eq:Thetacont}
\widehat{\zeta}^{c_1,c_2}_{p,q}\left(\Omega,s\right) &= \int_r^\infty\Theta^{c_1,c_2}_{p,q}(t \Omega)t^s \frac{dt}{t} \nn \\ &\ \ \ + \frac{e(p^\top q)}{\sqrt{\det(-i \Omega)}} \int_{r^{-1}}^\infty \Theta^{\ol\Omega c_1,\ol\Omega c_2}_{-q,p}(t(-\Omega^{-1}))t^{\frac{g}{2}-s} \frac{dt}{t}.
\end{align}
As we showed in the proof of \Cref{prop:conv}, the $\Theta$-functions in both integrals decay exponentially as $t \to \infty$, so the right-hand side converges for all $s \in \C$.  The right-hand side is obviously analytic for all $s \in \C$, so we've analytically continued $\widehat{\zeta}^{c_1,c_2}_{p,q}(\Omega,s)$ to an entire function of $s$.
Finally, we must prove the functional equation.  If we plug $\frac{g}{2}-s$ for $s$ in \cref{eq:Thetacont}, factor out the coefficient of the second term, and switch the order of the two terms, we obtain
\begin{align}
\widehat{\zeta}^{c_1,c_2}_{p,q}\left(\Omega,\frac{g}{2}-s\right) =& \  \frac{e(p^\top q)}{\sqrt{\det(-i\Omega)}} \left(\int_{r^{-1}}^\infty \Theta^{\ol\Omega c_1,\ol\Omega c_2}_{-q,p}(t(-\Omega^{-1}))t^{s} \frac{dt}{t} \right. \nn\\ & \ \left. - \frac{e(-p^\top q)}{\sqrt{\det(i\Omega^{-1})}} \int_r^\infty \Theta^{c_1,c_2}_{p,q}(t \Omega)t^{\frac{g}{2}-s} \frac{dt}{t}\right).\label{eq:duck1}
\end{align}
Reusing \cref{eq:Thetacont} on $\widehat{\zeta}^{\ol\Omega c_1,\ol \Omega c_2}_{-q,p}\left(-\Omega^{-1},s\right)$, and
appealing to the fact that $\Theta_{p,q}^{c_1,c_2}(\Omega) = -\Theta_{-p,-q}^{c_1,c_2}(\Omega)$, we have
\begin{align}\label{eq:duck2}
\widehat{\zeta}^{\ol\Omega c_1,\ol \Omega c_2}_{-q,p}\left(-\Omega^{-1},s\right) &= \int_{r^{-1}}^\infty \Theta^{\ol\Omega c_1,\ol\Omega c_2}_{-q,p}(t(-\Omega^{-1}))t^{s} \frac{dt}{t} \nn \\ &\ \ \ - \frac{e(-p^\top q)}{\sqrt{\det(i\Omega^{-1})}} \int_r^\infty \Theta^{c_1,c_2}_{p,q}(t \Omega)t^{\frac{g}{2}-s} \frac{dt}{t}.
\end{align}
The functional equation now follows from \cref{eq:duck1} and \cref{eq:duck2}.
\end{proof}

The formula for the analytic continuation is useful in itself.  In particular, we have used this formula for computer calculations, as it may be used to compute the indefinite zeta function to arbitrary precision in polynomial time.
\begin{cor}
The following expression is valid on the entire $s$-plane.
\begin{align}
\widehat{\zeta}^{c_1,c_2}_{p,q}\left(\Omega,s\right) &= \int_r^\infty\Theta^{c_1,c_2}_{p,q}(t \Omega)t^s \frac{dt}{t} \nn \\ &\ \ \ + \frac{e(p^\top q)}{\sqrt{\det(-i \Omega)}} \int_{r^{-1}}^\infty \Theta^{\ol\Omega c_1,\ol\Omega c_2}_{-q,p}(t(-\Omega^{-1}))t^{\frac{g}{2}-s} \frac{dt}{t}.\label{eq:Thetacont2}
\end{align}
\end{cor}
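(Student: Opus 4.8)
The plan is to observe that the claimed identity is nothing other than \cref{eq:Thetacont} (equivalently \cref{eq:Thetacont2}), which was already derived in the course of proving \Cref{thm:zetafun}; the only thing that remains to be said is that this derivation produces a formula valid on all of $\C$, not merely on the half-plane where the defining Mellin integral \cref{eq:indefzdef} converges.

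First I would fix an arbitrary $r>0$ and, for $s$ in the region where \cref{eq:indefzdef} converges absolutely, split the Mellin transform as $\int_0^\infty = \int_r^\infty + \int_0^r$. In the second piece I would substitute $t \mapsto t^{-1}$ and apply part (3) of \Cref{thm:omega} in characteristics form, together with the scale-invariance of $\Theta^{c_1,c_2}_{p,q}$ in the cone parameters, exactly as in the proof of \Cref{thm:zetafun}; this rewrites $\int_0^r \Theta^{c_1,c_2}_{p,q}(t\Omega)\,t^s\,\frac{dt}{t}$ as $\frac{e(p^\top q)}{\sqrt{\det(-i\Omega)}}\int_{r^{-1}}^\infty \Theta^{\ol\Omega c_1,\ol\Omega c_2}_{-q,p}(t(-\Omega^{-1}))\,t^{\frac{g}{2}-s}\,\frac{dt}{t}$. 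Adding the two pieces reproduces \cref{eq:Thetacont2} on that half-plane.

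Next I would invoke the exponential decay of indefinite theta functions as $t\to\infty$, established in the proof of \Cref{prop:conv}: for fixed $\Omega \in \HH_g^{(1)}$ and fixed characteristics and cone parameters, $\Theta^{c_1,c_2}_{p,q}(t\Omega)$ decays exponentially in $t$, and the same holds with $\Omega$ replaced by $-\Omega^{-1}$, which again lies in $\HH_g^{(1)}$. Hence each of the two integrals on the right-hand side of \cref{eq:Thetacont2} converges absolutely and locally uniformly in $s$ for every $s\in\C$, so the right-hand side is an entire function of $s$. Since it agrees with $\widehat{\zeta}^{c_1,c_2}_{p,q}(\Omega,s)$ --- itself entire by \Cref{thm:zetafun} --- on a half-plane, the identity theorem forces the two to coincide on all of $\C$.

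There is essentially no obstacle here: the corollary is a bookkeeping restatement of the formula extracted during the proof of \Cref{thm:zetafun}, and the work has already been done. The one point worth flagging is that the identity holds for \emph{every} choice of $r>0$, so the right-hand side is in fact independent of $r$; this is automatic once one knows both sides are entire and agree on a half-plane, but it can also be checked directly by differentiating the right-hand side in $r$ and observing that the two boundary contributions cancel.
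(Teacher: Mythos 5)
Your proposal is correct and follows exactly the paper's route: the corollary is simply the formula \cref{eq:Thetacont} extracted from the proof of \Cref{thm:zetafun}, where the splitting of the Mellin integral, the application of the modular transformation, and the exponential decay guaranteeing entirety of the right-hand side were all already established. Your additional remarks on the identity theorem and independence of $r$ are sound but not needed beyond what that proof supplies.
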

\begin{proof}
The is \cref{eq:Thetacont}.
\end{proof}

\section{Series expansion of indefinite zeta function}\label{sec:3series}

In this section, we give a series expansion for indefinite zeta functions, under the assumption that $c_1$ and $c_2$ are real.  Specifically, we write $\widehat{\zeta}^{c_1,c_2}_{p,q}(\Omega,s)$ as a sum of three series, the first of which is a Dirichlet series and the others of which involve hypergeometric functions.  This expansion is related to the decomposition of a weak harmonic Maass form into its holomorphic ``mock'' piece and a nonholomorphic piece obtained from a ``shadow'' form in another weight.  

To proceed, we will need to introduce some special functions and review some of their properties.

\subsection{Hypergeometric functions and modified beta functions}

Let $a,b,c$ be complex numbers, $c$ not a negative integer or zero.  If $z \in \C$ with $|z|<1$, the power series
\begin{equation}\label{eq:hf}
\hf(a,b;c;z) := \sum_{n=0}^\infty \frac{(a)_n(b)_n}{(c)_n} \cdot \frac{z^n}{n!}
\end{equation}
converges.  Here we are using the Pochhammer symbol $(w)_n := w(w+1)\cdots (w+n-1)$.
\begin{prop}\label{hypergeocont}
There is an identity
\begin{equation}
\hf(a,b;c;z) = (1-z)^{-b}\hf\left(b,c-a; c; \frac{z}{z-1}\right),
\end{equation}
valid about $z=0$ and using the principal branch for $(1-z)^{-b}$.
\end{prop}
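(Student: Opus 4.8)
The plan is to deduce the identity from Euler's integral representation of the Gauss hypergeometric function. Recall that for $\re(c) > \re(a) > 0$ and $z$ off the cut $[1,\infty)$ one has
\[
\hf(a,b;c;z) = \frac{\Gamma(c)}{\Gamma(a)\Gamma(c-a)}\int_0^1 t^{a-1}(1-t)^{c-a-1}(1-zt)^{-b}\,dt,
\]
which follows by expanding $(1-zt)^{-b}$ in a binomial series and integrating term by term against the beta density (here I have used the symmetry $\hf(a,b;c;z) = \hf(b,a;c;z)$ to put $a$ in the slot usually occupied by $b$). The first step is to substitute $t \mapsto 1-t$, which turns the integral into $\int_0^1 t^{c-a-1}(1-t)^{a-1}\bigl(1 - z(1-t)\bigr)^{-b}\,dt$.

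The key algebraic step is the factorisation $1 - z(1-t) = (1-z) + zt = (1-z)\bigl(1 - \frac{z}{z-1}t\bigr)$, which pulls out a factor $(1-z)^{-b}$ and leaves $\bigl(1 - \frac{z}{z-1}t\bigr)^{-b}$; for $z$ near $0$ both $1-zt$ and $1 - \frac{z}{z-1}t$ stay near $1$ for $t\in[0,1]$, so the principal branch is used uniformly and this is an identity of principal-branch powers. Comparing the resulting expression
\[
(1-z)^{-b}\,\frac{\Gamma(c)}{\Gamma(a)\Gamma(c-a)}\int_0^1 t^{c-a-1}(1-t)^{a-1}\Bigl(1 - \tfrac{z}{z-1}t\Bigr)^{-b}\,dt
\]
with Euler's integral again, now read with parameters $(b,\,c-a;\,c)$ and argument $\frac{z}{z-1}$, identifies it as $(1-z)^{-b}\hf(b,c-a;c;\frac{z}{z-1})$, which is the claim.

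The only place any care is needed is the range of validity. The integral manipulation requires $\re(c) > \re(a) > 0$, but both sides of the asserted identity are holomorphic in $(a,b,c)$ (with $c$ avoiding nonpositive integers) on a fixed punctured neighbourhood of $z=0$, so the identity propagates to all such parameters by analytic continuation; likewise, for fixed parameters both sides are holomorphic in $z$ near $0$ — note $\frac{z}{z-1}\to 0$ as $z\to 0$, so the right-hand series converges there — which is precisely the ``valid about $z=0$'' assertion. An alternative route that sidesteps the integral entirely is to set $g(z) := (1-z)^{-b}\hf(b,c-a;c;\frac{z}{z-1})$ and check directly that $g$ satisfies the hypergeometric differential equation with parameters $(a,b;c)$ — using that $z \mapsto \frac{z}{z-1}$ is an involution — together with $g(0)=1$ and $g'(0) = \frac{ab}{c}$, after which uniqueness of the holomorphic solution normalised this way at the regular singular point $z=0$ finishes it. Either way the content is entirely classical, and the main ``obstacle'' is purely bookkeeping: tracking the parameter constraints and the branch of $(1-z)^{-b}$.
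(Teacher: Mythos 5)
Your proof is correct. The paper itself offers no argument here --- it simply cites Theorem 2.2.5 of Andrews--Askey--Roy --- so your write-up supplies a self-contained derivation of what is, after the symmetry $\hf(a,b;c;z)=\hf(b,a;c;z)$, the classical Pfaff transformation. Your route via Euler's integral is the standard textbook proof, and the details check out: placing $a$ in the integration slot forces the hypothesis $\re(c)>\re(a)>0$ (rather than the usual $\re(c)>\re(b)>0$), the substitution $t\mapsto 1-t$ and the factorisation $1-z(1-t)=(1-z)\bigl(1-\tfrac{z}{z-1}t\bigr)$ are right, the re-identification of the resulting integral as Euler's representation with parameters $(b,\,c-a;\,c)$ and argument $\tfrac{z}{z-1}$ matches (the Gamma-factors $\Gamma(a)\Gamma(c-a)/\Gamma(c)$ are preserved under $B=c-a$, $C=c$), and your handling of the principal branches is sound since for $z$ near $0$ all the quantities $1-zt$, $1-z$, and $1-\tfrac{z}{z-1}t$ stay near $1$, so the multiplicativity of principal powers causes no trouble. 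The final removal of the constraint $\re(c)>\re(a)>0$ by analytic continuation in the parameters is also fine (one minor quibble: there is no need for a \emph{punctured} neighbourhood of $z=0$, as both sides equal $1$ there). The alternative sketch via the hypergeometric ODE and the involution $z\mapsto\tfrac{z}{z-1}$ would also work, but as written it is only an outline; the integral argument is the one that carries the proof. In short: correct, complete, and more informative than the paper's bare citation.
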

\begin{proof}
This is part of Theorem 2.2.5 of \cite{special}.
\end{proof} 
Using this identity, we extend the domain of definition of $\hf(a,b;c;x)$ from the unit disc $\{|z|<1\}$ to the union of the unit disc and a half-plane $\{|z|<1\} \cup \{\re(z)<\foh\}$. We interpret $(1-z)^{-b} = \exp(-b\log(1-z))$ with the logarithm having a branch cut along the negative real axis.  At the boundary point $z=1$, the hypergeometric series converges when $\re(c) > \re(a+b)$, and its evaluation is a classical theorem of Gauss.
\begin{prop}\label{gausshypergeo}
If $\re(c) > \re(a+b)$, then
\begin{equation}
\hf(a,b;c;1) = \frac{\Gamma(c)\Gamma(c-a-b)}{\Gamma(c-a)\Gamma(c-b)}.
\end{equation}
\end{prop}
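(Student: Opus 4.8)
The plan is to prove this as Gauss did, via the Euler integral representation of $\hf$ and a reduction to the Beta integral. Since the identity is entirely classical and appears in the reference \cite{special} already invoked for \Cref{hypergeocont}, one option is simply to cite it; I will instead sketch the self-contained argument.

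First I would establish the Euler integral representation
\begin{equation}
\hf(a,b;c;z) = \frac{\Gamma(c)}{\Gamma(b)\Gamma(c-b)}\int_0^1 t^{b-1}(1-t)^{c-b-1}(1-zt)^{-a}\,dt
\end{equation}
for $\re(c)>\re(b)>0$ and $|z|<1$. This follows by expanding $(1-zt)^{-a} = \sum_{n\ge 0}\frac{(a)_n}{n!}(zt)^n$, integrating term by term (justified by absolute and uniform convergence for $t\in[0,1]$ once $|z|<1$), and using the Beta integral $\int_0^1 t^{b+n-1}(1-t)^{c-b-1}\,dt = \frac{\Gamma(b+n)\Gamma(c-b)}{\Gamma(c+n)}$ together with $\frac{(b)_n}{(c)_n} = \frac{\Gamma(b+n)\Gamma(c)}{\Gamma(b)\Gamma(c+n)}$, which reproduces the series \cref{eq:hf}.

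Next I would let $z\to 1^-$. The hypothesis $\re(c)>\re(a+b)$ makes the $z=1$ integrand $t^{b-1}(1-t)^{c-a-b-1}$ integrable near $t=1$, and for $z\in[0,1)$ the integrand is dominated (up to a constant) by $t^{\re(b)-1}(1-t)^{\re(c-a-b)-1}$, so dominated convergence gives
\begin{equation}
\hf(a,b;c;1) = \frac{\Gamma(c)}{\Gamma(b)\Gamma(c-b)}\int_0^1 t^{b-1}(1-t)^{c-a-b-1}\,dt = \frac{\Gamma(c)}{\Gamma(b)\Gamma(c-b)}\cdot\frac{\Gamma(b)\Gamma(c-a-b)}{\Gamma(c-a)},
\end{equation}
which simplifies to the asserted formula. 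On the series side, convergence of \cref{eq:hf} at $z=1$ in this parameter range together with Abel's theorem identifies the limit with $\hf(a,b;c;1)$. Finally, to remove the auxiliary restriction $\re(c)>\re(b)>0$, I would note that both sides are holomorphic in $(a,b,c)$ on the region cut out by $\re(c-a-b)>0$ and $c\notin\{0,-1,-2,\dots\}$ (the left side because the series converges locally uniformly there), and that the $a\leftrightarrow b$ symmetry of $\hf$ lets one instead assume $\re(c)>\re(a)>0$; an analytic continuation argument in the parameters then yields the identity throughout $\re(c)>\re(a+b)$.

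The only obstacle is bookkeeping: carefully justifying the term-by-term integration, the interchange of limit and integral at $z=1$, and the parameter continuation. None of this is deep, so in the interest of brevity it would be reasonable to state the result with a citation to \cite{special}, where it appears in the same chapter as \Cref{hypergeocont}.
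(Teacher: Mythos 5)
Your sketch is correct, but the paper does not prove this at all: its ``proof'' is simply the citation to Theorem 2.2.2 of \cite{special}, exactly the fallback you propose in your last sentence. The argument you outline (Euler integral representation, Beta integral at $z=1$, Abel's theorem on the series side, and removal of the auxiliary restriction $\re(c)>\re(b)>0$ by the $a\leftrightarrow b$ symmetry and analytic continuation in the parameters over the connected region $\re(c-a-b)>0$, $c\notin\Z_{\le 0}$) is the standard proof and is sound; it is essentially the one given in the cited reference, so nothing is lost by replacing it with the citation as the paper does.
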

\begin{proof}
This is Theorem 2.2.2 of \cite{special}.
\end{proof}

Of particular interest to us will be a special hypergeometric function which is a modified version of the beta function.
\begin{defn}
Let $x > 0$ and $a,b \in \C$.  The \textit{beta function} is
\begin{equation}
B(x;a,b) := \int_0^x t^{a-1}(1-t)^{b-1}\,dt,
\end{equation}
and the \textit{modified beta function} is
\begin{equation}
\tB(x;a,b) := \int_0^x t^{a-1}(1+t)^{b-1}\,dt.
\end{equation}
\end{defn}
The following proposition enumerates some properties of the modified beta function.
\begin{prop}\label{hfrelation}
Let $x > 0$, and let $a, b$ be complex numbers with $\re(a),\re(b)>0$ and $\re(a+b)<1$.  Then,  
\begin{enumerate}
\item[(1)] 
$\ds \tB(x;a,b) = B\left(\frac{x}{x+1};a,1-a-b\right)$,
\item[(2)] 
$\ds \tB(x;a,b) = \frac{1}{a}x^a\hf(a,1-b;a+1;-x)$,
\item[(3)] 
$\ds \tB\left(\frac{1}{x};a,b\right) = \frac{\Gamma(a)\Gamma(1-a-b)}{\Gamma(1-b)} - \tB(x;1-a-b,b)$, and
\item[(4)] 
$\ds \tB(+\infty;a,b) = B(1;a,1-a-b) = \frac{\Gamma(a)\Gamma(1-a-b)}{\Gamma(1-b)}$.
\end{enumerate}
\end{prop}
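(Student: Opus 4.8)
The plan is to establish the four parts in order, using (1) as the base identity, deriving (4) and (3) from it together with the classical Euler beta integral, and obtaining (2) by a direct binomial-series computation; throughout, the stated hypotheses $\re(a),\re(b)>0$, $\re(a+b)<1$ are exactly what is needed to make every integral and $\Gamma$-factor that appears well-defined. For (1) I would use the substitution $u=\frac{t}{1+t}$, i.e.\ $t=\frac{u}{1-u}$, which maps $[0,x]$ onto $[0,\frac{x}{x+1}]$ and satisfies $1+t=\frac{1}{1-u}$ and $dt=\frac{du}{(1-u)^2}$. Substituting into $\tB(x;a,b)=\int_0^x t^{a-1}(1+t)^{b-1}\,dt$, the power of $u$ is $a-1$ and the powers of $1-u$ contributed by $t^{a-1}$, $(1+t)^{b-1}$ and $dt$ add up to $(1-a)+(1-b)-2=(1-a-b)-1$, so the integral becomes $\int_0^{x/(x+1)} u^{a-1}(1-u)^{(1-a-b)-1}\,du=B(\frac{x}{x+1};a,1-a-b)$, with convergence at $u=0$ guaranteed by $\re(a)>0$.

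Then (4) follows by letting $x\to\infty$ in (1): $\frac{x}{x+1}\to 1$, and dominated convergence (legitimate because $\re(a)>0$ and $\re(a+b)<1$) identifies $\tB(+\infty;a,b)$ with the complete Euler beta integral $B(1;a,1-a-b)=\frac{\Gamma(a)\Gamma(1-a-b)}{\Gamma(1-b)}$. For (3) I would apply the inversion $t\mapsto 1/t$ to $\tB(1/x;a,b)=\int_0^{1/x} t^{a-1}(1+t)^{b-1}\,dt$; after simplifying the exponents this becomes the tail integral $\int_x^\infty u^{(1-a-b)-1}(1+u)^{b-1}\,du=\tB(+\infty;1-a-b,b)-\tB(x;1-a-b,b)$, and inserting the value of $\tB(+\infty;1-a-b,b)$ from (4) — note that $1-(1-a-b)-b=a$ — yields the claimed formula.

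For (2) I would expand $(1+t)^{b-1}=\sum_{n\ge 0}\frac{(-1)^n(1-b)_n}{n!}\,t^n$ (the binomial series, uniformly convergent on $[0,x]$ for $0<x<1$) and integrate term by term; using $\frac{(a)_n}{(a+1)_n}=\frac{a}{a+n}$ this gives $\tB(x;a,b)=\sum_{n\ge 0}\frac{(-1)^n(1-b)_n}{n!\,(a+n)}x^{a+n}=\frac{1}{a}x^a\hf(a,1-b;a+1;-x)$. Both sides extend to real-analytic functions of $x$ on all of $(0,\infty)$ — the left-hand side since its derivative $x^{a-1}(1+x)^{b-1}$ is real-analytic there, the right-hand side because $-x$ stays in the cut plane $\C\setminus[1,\infty)$ on which $\hf$ is holomorphic — so the identity, valid for $0<x<1$, extends to every $x>0$ by the identity theorem. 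Alternatively one can avoid the continuation step altogether by writing the incomplete beta function in (1) as $B(y;a,1-a-b)=\frac{y^a}{a}\hf(a,a+b;a+1;y)$ with $y=\frac{x}{x+1}$ and applying the Pfaff transformation \Cref{hypergeocont}, followed by Euler's transformation, to bring the hypergeometric argument to $-x$.

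Every step above is a substitution or a manipulation of Pochhammer symbols and $\Gamma$-factors. The only point requiring genuine care is the interchange of summation and integration in (2) together with the ensuing analytic continuation in $x$ past $|x|=1$; I expect this to be the main (and rather mild) obstacle, handled either by the continuation argument above or by the route through \Cref{hypergeocont}.
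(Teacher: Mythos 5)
Your proposal is correct, and for parts (1), (2), and (3) it follows essentially the same route as the paper: the substitution $t=\frac{u}{1-u}$ for (1), term-by-term integration of the binomial series for (2), and the inversion $t\mapsto 1/u$ reducing (3) to (4). There are two points of genuine divergence. First, for (4) you identify $B(1;a,1-a-b)$ directly with the complete Euler beta integral $\frac{\Gamma(a)\Gamma(1-a-b)}{\Gamma(1-b)}$, whereas the paper instead routes through (2), the Pfaff transformation (\Cref{hypergeocont}), and Gauss's summation theorem (\Cref{gausshypergeo}); your derivation is more elementary and avoids the hypergeometric machinery entirely, while the paper's keeps everything inside the hypergeometric framework it has just set up. Second, in (2) you explicitly address the fact that the binomial expansion of $(1+t)^{b-1}$ is only valid on $[0,x]$ for $x<1$, and you supply the missing continuation to all $x>0$ (via real-analyticity of both sides and the extension of $\hf$ to $\{\re(z)<\frac{1}{2}\}$, or alternatively via the Pfaff transformation); the paper's proof performs the series manipulation without comment on this point, so your version is actually the more careful one. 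Both approaches are sound, and the hypotheses $\re(a),\re(b)>0$, $\re(a+b)<1$ are used in the same places in each.
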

\begin{proof}
To prove (1), we use the substitution $t=\frac{u}{1-u}$.
\begin{align}
\tB(x;a,b) &= \int_0^x t^{a-1}(1+t)^{b-1}\,dt\\
             &= \int_0^{\frac{x}{x+1}} \left(\frac{u}{1-u}\right)^{a-1}\left(1+\frac{u}{1-u}\right)^{b-1}\,\frac{du}{(1-u)^2}\\
             &= \int_0^{\frac{x}{x+1}} u^{a-1}(1-u)^{-a-b}\,du\\
             &= B\left(\frac{x}{x+1};a,1-a-b\right).
\end{align}
To prove (2), expand $G(x;a,b)$ as a power series in $x$ (up to a non-integral power).
\begin{align}
\tB(x;a,b) &= \int_0^x t^{a-1}(1+t)^{b-1}\,dt\\
             &= \int_0^x \sum_{n=0}^\infty {b-1 \choose n} t^{n+a-1}\,dt\\
             &= \sum_{n=0}^\infty {b-1 \choose n} \frac{1}{n+a}x^{n+a}\\
             &= \sum_{n=0}^\infty \frac{(b-n)\cdot(b-n+1)\cdots(b-1)}{n!} \cdot \frac{1}{n+a}x^{n+a}\\
             &= x^a\sum_{n=0}^\infty \frac{(-1)^n(1-b)\cdot(2-b)\cdots(n-b)}{n+a} \cdot \frac{x^{n}}{n!}\\
             &= x^a\sum_{n=0}^\infty \frac{(a)_n(1-b)_n}{a(a+1)_n} \cdot \frac{(-x)^{n}}{n!}\\
             &= \frac{1}{a}x^a\hf(a,1-b;a+1;-x).
\end{align}
To prove (3), use the substitution $t=\frac{1}{u}$.
\begin{align}
\tB\left(\frac{1}{x};a,b\right) &= \int_0^{1/x} t^{a-1}(1+t)^{b-1}\,dt\\
                                          &= \int_\infty^x u^{-a+1}\left(1+\frac{1}{u}\right)^{b-1}\,\left(-\frac{du}{u^2}\right)\\
                                          &= \int_x^\infty u^{-a-b}(1+u)^{b-1}\,du\\
                                          &= G(+\infty,1-a-b,b) - G(x,1-a-b,b)
\end{align}
To complete the proof of (3), we need to prove (4).  Note that it follows from (4) that $\tB(+\infty,1-a-b,b) = \frac{\Gamma(a)\Gamma(1-a-b)}{\Gamma(1-b)}$.  The first equality of (4) follows from (1) with $x \to +\infty$; we will now derive the second.
By (2),
\begin{align}
\tB(x;a,b) &= \frac{1}{a}x^a\hf(a,1-b;a+1;-x) \\
             &= \frac{1}{a}x^a\hf(1-b,a;a+1;-x) \\
             &= \frac{1}{a}x^a \cdot (1-(-x))^{-a}\hf\left(a,(a+1)-(1-b); a+1; \frac{-x}{(-x)-1}\right) \label{eq:ntl}\\
             &= \frac{1}{a}\left(\frac{x}{1+x}\right)^a\hf\left(a,a+b; a+1; \frac{x}{x+1}\right)
\end{align}
\Cref{hypergeocont} was used in \cref{eq:ntl}.  
Sending $x \to +\infty$ and applying \Cref{gausshypergeo} yields the second equality of (4).
\end{proof}

\begin{lem}\label{mellinlem}
Let $\lambda, \mu > 0$, and $\re(s) > 0$.  Then
\begin{align}
\int_0^\infty \eE(\sqrt{\lambda t})\exp(-\mu t)t^s\frac{dt}{t} 
&= \foh \pi^{-1/2} \mu^{-s} \Gamma\left(s+\foh\right)\widetilde{B}\left(\frac{\pi \lambda}{\mu};\foh,\foh-s\right). \label{lem1part2}
\end{align}
\end{lem}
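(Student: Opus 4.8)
The plan is to turn the Gaussian integral defining $\eE$ into an extra one-dimensional parameter integral, apply Fubini, evaluate the inner Mellin integral in $t$ as a $\Gamma$-factor, and then recognise the remaining single integral as a modified beta function after a change of variables.

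First I would use the substitution $u=\sqrt{\lambda t}\,r$ in the defining integral of $\eE$ to write, for $t>0$,
\[
\eE\!\left(\sqrt{\lambda t}\right)=\int_0^{\sqrt{\lambda t}}e^{-\pi u^2}\,du=\sqrt{\lambda t}\int_0^1 e^{-\pi\lambda t r^2}\,dr .
\]
Substituting this into the left-hand side of \cref{lem1part2} and interchanging the $r$- and $t$-integrations---legitimate for $\re(s)>0$, since $0\le\eE(\sqrt{\lambda t})\le\tfrac12$ and $\mu>0$ make $\int_0^1\int_0^\infty t^{\re(s)-1/2}e^{-(\mu+\pi\lambda r^2)t}\,dt\,dr$ finite---gives
\[
\int_0^\infty \eE\!\left(\sqrt{\lambda t}\right)e^{-\mu t}t^{s}\,\frac{dt}{t}
=\sqrt{\lambda}\int_0^1\left(\int_0^\infty t^{s-1/2}e^{-(\mu+\pi\lambda r^2)t}\,dt\right)dr .
\]

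Next I would evaluate the inner integral: as $\re\!\left(s+\tfrac12\right)>0$ and $\mu+\pi\lambda r^2>0$, it equals $\Gamma\!\left(s+\tfrac12\right)(\mu+\pi\lambda r^2)^{-(s+1/2)}$. Factoring out $\mu^{-(s+1/2)}$ leaves the single integral $\int_0^1\bigl(1+\tfrac{\pi\lambda}{\mu}r^2\bigr)^{-(s+1/2)}\,dr$, to which I would apply $x=\tfrac{\pi\lambda}{\mu}r^2$; this turns it into $\tfrac12\sqrt{\mu/(\pi\lambda)}\int_0^{\pi\lambda/\mu}x^{-1/2}(1+x)^{-(s+1/2)}\,dx=\tfrac12\sqrt{\mu/(\pi\lambda)}\,\widetilde{B}\!\left(\tfrac{\pi\lambda}{\mu};\tfrac12,\tfrac12-s\right)$ directly from the definition of $\widetilde{B}$ (the integrand is integrable at $x=0$ since $\re(\tfrac12)>0$, and bounded on the rest of the finite interval). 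Collecting the powers of $\lambda$ and $\mu$---namely $\sqrt{\lambda}\cdot\sqrt{\mu/(\pi\lambda)}=\pi^{-1/2}\mu^{1/2}$ and $\mu^{1/2}\cdot\mu^{-(s+1/2)}=\mu^{-s}$---yields exactly the right-hand side of \cref{lem1part2}.

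There is no serious obstacle here: the only step that warrants a word of care is the interchange of the two integrals, which is justified by the uniform bound $\eE\le\tfrac12$ together with $\mu>0$ and $\re(s)>0$; everything else is the standard $\Gamma$-integral evaluation and bookkeeping of exponents. One could alternatively start from the representation $\eE(\alpha)=\tfrac12\int_0^{\alpha^2}w^{-1/2}e^{-\pi w}\,dw$ (for $\alpha>0$) recorded in \Cref{defn:incomplete}, but the substitution above keeps the intermediate integrals in the cleanest form.
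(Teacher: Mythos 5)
Your proof is correct and follows essentially the same route as the paper: rewrite $\eE(\sqrt{\lambda t})$ as an auxiliary integral whose limits do not depend on $t$, swap the order of integration, evaluate the inner $t$-integral as a $\Gamma$-factor, and identify the remaining integral as $\widetilde{B}\left(\frac{\pi\lambda}{\mu};\foh,\foh-s\right)$. The only difference is cosmetic: the paper starts from $\eE(\alpha)=\foh\int_0^{\alpha^2}u^{-1/2}e^{-\pi u}\,du$ and substitutes $u=\mu t v/\pi$ before swapping, whereas you substitute $u=\sqrt{\lambda t}\,r$ in the Gaussian form and change variables to $x=\frac{\pi\lambda}{\mu}r^2$ afterwards; the exponent bookkeeping in both cases is identical and yours checks out.
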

\begin{proof}
First of all, note that the left-hand side of \Cref{lem1part2} converges: The integrand is $\exp(-O(t))$ as $t \to \infty$ and $O\left(t^{\re{s}-\foh}\right)$ as $t \to 0$.
Write $\eE(\sqrt{\lambda t}) = \foh\int_0^{\lambda t} u^{-1/2}e^{-\pi u}du$.  The left-hand side of \Cref{lem1part2} may be rewritten, using the substitution $u=\frac{\mu t v}{\pi}$ in the inner integral, as
\begin{align}
\int_0^\infty \eE(\sqrt{\lambda t})\exp(-\mu t)t^s\frac{dt}{t} 
&= \foh\int_0^\infty \int_0^{\lambda t} u^{-1/2}e^{-(\pi u + \mu t)}t^s\,du\frac{dt}{t}\\
&= \foh\int_0^\infty \int_0^{\frac{\pi \lambda}{\mu}} \left(\frac{\mu t v}{\pi}\right)^{-1/2}e^{-(\mu t v + \mu t)}t^s\,\frac{\mu t}{\pi}dv\frac{dt}{t}.
\end{align}
The double integral is absolutely convergent (indeed, the integrand is nonnegative, and we already showed convergence), so we may swap the integrals.  We compute
\begin{align}
\int_0^\infty \eE(\sqrt{\lambda t})\exp(-\mu t)t^s\frac{dt}{t}
&= \foh\left(\frac{\mu}{\pi}\right)^{1/2}\int_0^{\frac{\pi \lambda}{\mu}} v^{-1/2} \left(\int_0^\infty e^{-\mu t(v+1)}t^{s+\foh}\,\frac{dt}{t}\right)\,dv\\
&= \foh\left(\frac{\mu}{\pi}\right)^{1/2}\int_0^{\frac{\pi \lambda}{\mu}} v^{-1/2} \left(\Gamma\left(s+\foh\right)\left(\mu(v+1)\right)^{-(s+\foh)}\right)\,dv\\
&= \foh\pi^{-1/2}\mu^{-s}\Gamma\left(s+\foh\right)\int_0^{\frac{\pi \lambda}{\mu}} v^{-1/2}(v+1)^{-(s+\foh)}\,dv\\
&= \foh\pi^{-1/2}\mu^{-s}\Gamma\left(s+\foh\right)\tB\left(\frac{\pi \lambda}{\mu};\foh,\foh-s\right).
\end{align}
This proves \Cref{lem1part2}.  
\end{proof}

\begin{lem}\label{lem:laneboy}
Let $\nu_1, \nu_2 \in \R$ and $\mu \in \C$ satisfying $\re(\mu)>-\pi\max\{\nu_1^2,\nu_2^2\}$ if $\sgn(\nu_1) = \sgn(\nu_2)$ and $\re(\mu)>0$ otherwise.  Then,
\begin{align}
&\int_0^\infty \left.\eE\left(\nu t^{1/2}\right)\right|_{\nu=\nu_1}^{\nu_2} \exp(-\mu t)t^s\frac{dt}{t} \nn\\
&= \foh\left(\sgn(\nu_2)-\sgn(\nu_1)\right)\Gamma(s)\mu^{-s} \nn\\ 
&\ \ \ - \frac{\sgn(\nu_2)}{2s}\pi^{-\left(s+\foh\right)}\Gamma\left(s+\foh\right)\abs{\nu_2}^{-2s}\hf\left(s,s+\foh,s+1;-\frac{\mu}{\pi\nu_2^2}\right) \nn\\
&\ \ \ + \frac{\sgn(\nu_1)}{2s}\pi^{-\left(s+\foh\right)}\Gamma\left(s+\foh\right)\abs{\nu_1}^{-2s}\hf\left(s,s+\foh,s+1;-\frac{\mu}{\pi\nu_1^2}\right). \label{eq:laneboy}
\end{align}
\end{lem}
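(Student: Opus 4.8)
The plan is to reduce the left-hand side of \cref{eq:laneboy} to two applications of the shifted-Gaussian Mellin transform \Cref{mellinlem}, one for each endpoint $\nu_1,\nu_2$, and then to convert the resulting modified beta functions $\tB$ into the hypergeometric functions appearing on the right-hand side by the transformation formulas of \Cref{hfrelation}. First I would dispose of the signs: since $\eE(\alpha)=\int_0^\alpha e^{-\pi u^2}\,du$ is odd in $\alpha$, we have $\eE(\nu_j t^{1/2})=\sgn(\nu_j)\,\eE(\sqrt{\nu_j^2\,t})$ for $j=1,2$, both sides being $0$ when $\nu_j=0$ (in which case the corresponding term of \cref{eq:laneboy} is read as $0$ under the convention $\sgn(\nu_j)\abs{\nu_j}^{-2s}:=0$). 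I would then temporarily restrict to $\mu>0$ real and $0<\re(s)<\foh$, where each integral $\int_0^\infty \eE(\sqrt{\nu_j^2 t})\,e^{-\mu t}t^s\,\frac{dt}{t}$ converges on its own and the left side of \cref{eq:laneboy} is the $j=2$ such integral minus the $j=1$ one, each scaled by $\sgn(\nu_j)$. Applying \Cref{mellinlem} with $\lambda=\nu_j^2$ then gives $\int_0^\infty \eE(\sqrt{\nu_j^2 t})\,e^{-\mu t}t^s\,\frac{dt}{t}=\foh\pi^{-1/2}\mu^{-s}\Gamma(s+\foh)\,\tB\!\left(\frac{\pi\nu_j^2}{\mu};\foh,\foh-s\right)$.

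Next I would turn each $\tB$ into a hypergeometric function. Taking $(a,b)=\left(\foh,\foh-s\right)$ in \Cref{hfrelation}(3) (valid since $0<\re(s)<\foh$), and noting $1-a-b=s$ and $1-b=s+\foh$, one gets
\begin{equation*}
\tB\!\left(\frac{\pi\nu_j^2}{\mu};\foh,\foh-s\right)=\frac{\sqrt{\pi}\,\Gamma(s)}{\Gamma(s+\foh)}-\tB\!\left(\frac{\mu}{\pi\nu_j^2};s,\foh-s\right);
\end{equation*}
then \Cref{hfrelation}(2) with $(a,b)=\left(s,\foh-s\right)$ gives
\begin{equation*}
\tB\!\left(\frac{\mu}{\pi\nu_j^2};s,\foh-s\right)=\frac{1}{s}\left(\frac{\mu}{\pi\nu_j^2}\right)^{s}\hf\!\left(s,s+\foh,s+1;-\frac{\mu}{\pi\nu_j^2}\right).
\end{equation*}
Substituting both back and simplifying with $\pi^{-1/2}\sqrt{\pi}=1$ and $\mu^{-s}\left(\frac{\mu}{\pi\nu_j^2}\right)^{s}=\pi^{-s}\abs{\nu_j}^{-2s}$, the $j$-th of these integrals evaluates to $\foh\Gamma(s)\mu^{-s}-\frac{1}{2s}\pi^{-(s+\foh)}\Gamma(s+\foh)\abs{\nu_j}^{-2s}\hf\!\left(s,s+\foh,s+1;-\frac{\mu}{\pi\nu_j^2}\right)$; multiplying by $\sgn(\nu_j)$ and forming the $j=2$ contribution minus the $j=1$ contribution, the constant pieces combine into $\foh(\sgn(\nu_2)-\sgn(\nu_1))\Gamma(s)\mu^{-s}$ and the two hypergeometric pieces appear with exactly the signs claimed, which is \cref{eq:laneboy}.

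Finally I would remove the temporary restrictions by analytic continuation. Both sides of \cref{eq:laneboy} are holomorphic (for the right side, meromorphic) in $\mu$ and in $s$ on the region described in the hypotheses — the left-hand integral converges there by a direct estimate of its integrand, and the hypergeometric factors on the right are interpreted via the continuation of \Cref{hypergeocont} — so the identity, once established on the open set $\{\mu>0,\ 0<\re(s)<\foh\}$, propagates one variable at a time to the full domain. The only step that needs genuine care is that when $\sgn(\nu_1)=\sgn(\nu_2)$ the two integrals produced in the first step converge individually only for $\re(\mu)>0$, whereas their difference extends further: the leading constants $\foh\sgn(\nu_j)$ cancel, and what remains decays exponentially as $t\to\infty$, so one must argue on the difference directly rather than term by term. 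I expect everything else to be routine bookkeeping — the only substantive choices are the specializations $(a,b)$ of the beta-function identities that make the hypergeometric argument come out as $-\mu/(\pi\nu_j^2)$, together with careful tracking of the powers of $\pi$, the $\Gamma$-factors, and the signs through the substitutions.
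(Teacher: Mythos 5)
Your proposal is correct and follows essentially the same route as the paper's own proof: reduce to \Cref{mellinlem}, convert the resulting $\tB$ via parts (3) and (2) of \Cref{hfrelation} with exactly the parameter choices $(a,b)=(\foh,\foh-s)$ and then $(s,\foh-s)$, and finish by analytic continuation in $\mu$. Your explicit handling of the signs via the oddness of $\eE$ and your remark about arguing on the difference when $\sgn(\nu_1)=\sgn(\nu_2)$ are refinements the paper leaves implicit, but the argument is the same.
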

\begin{proof}
Initially, consider $\lambda, \mu >0$, as in \Cref{mellinlem}.  We have
\begin{align}
&\int_0^\infty \eE\left(\sqrt{\l t}\right)\exp(-\mu t)t^s\frac{dt}{t} \nn \\ &= \frac{1}{2}\pi^{-\foh}\mu^{-s}\Gamma\left(s+\foh\right)\tB\left(\frac{\pi\l}{\mu};\foh,\foh-s\right) \\
&= \foh\pi^{-\foh}\mu^{-s}\Gamma\left(s+\foh\right)\left(\frac{\Gamma(\foh)\Gamma(s)}{\Gamma(s+\foh)} - \tB\left(\frac{\mu}{\pi\l};s,1-s\right)\right) \\
&= \foh\Gamma(s)\mu^{-s} - \foh\pi^{-\foh}\mu^{-s}\Gamma\left(s+\foh\right)\tB\left(\frac{\mu}{\pi\l};s,1-s\right) \\
&= \foh \Gamma(s)\mu^{-s}  - \frac{1}{2s}\pi^{-(s+\foh)}\Gamma\left(s+\foh\right)\l^{-s}\hf\left(s,s+\foh,s+1;-\frac{\mu}{\pi\l}\right),
\end{align}
using parts (2) and (3) of \Cref{hfrelation}.  \Cref{eq:laneboy} follows for positive real $\mu$.  But the integral on the left-hand side of \cref{eq:laneboy} converges for $\re(\mu)>-\pi\max\{\nu_1^2,\nu_2^2\}$ if $\sgn(\nu_1) = \sgn(\nu_2)$ and $\re(\mu)>0$ otherwise, and both sides are analytic functions in $\mu$ on this domain.  Thus, \cref{eq:laneboy} holds in general by analytic continuation.
\end{proof}

\subsection{The series expansion}

We are now ready to prove \Cref{thm:splitsy}, which we first restate here for convenience.
\begin{thm:splitsy}
If $c_1,c_2 \in \R^g$, and $\re(s)>1$, then the indefinite zeta function may be written as
\begin{equation}
\widehat{\zeta}^{c_1,c_2}_{p,q}(\Omega,s) = \pi^{-s}\Gamma(s)\zeta^{c_1,c_2}_{p,q}(\Omega,s)-\pi^{-(s+\foh)}\Gamma\left(s+\foh\right)\left(\xi^{c_2}_{p,q}(\Omega,s)-\xi^{c_1}_{p,q}(\Omega,s)\right),
\end{equation}
where $M = \im(\Omega)$,
\begin{align}
\zeta^{c_1,c_2}_{p,q}(\Omega,s) &= \foh\sum_{n \in \Z^g+q}\left(\sgn(c_1^\top Mn) - \sgn(c_2^\top Mn)\right)e\left(p^\top n\right)Q_{-i\Omega}(n)^{-s},
\end{align}
and
\begin{align}
\xi^{c}_{p,q}(\Omega,s) &= \foh\sum_{\nu \in \Z^g+q} \left(\sgn(c^\top Mn)e\left(p^\top n\right)\left(\frac{(c^\top Mn)^2}{Q_M(c)}\right)^{-s}\right.\nn \\&\ \ \ \ \ \ \ \ \ \ \ \ \ \ \ \ \ \ \times\left.\hf\left(s,s+\foh,s+1;\frac{2Q_M(c)Q_{-i\Omega}(n)}{(c^\top Mn)^2}\right)\right).
\end{align}
\end{thm:splitsy}
\begin{proof}
Take the Mellin transform of the theta series term-by-term, and apply \Cref{lem:laneboy}.  
Note that the series for $\xi^{c}_{p,q}(\Omega,s)$ converges absolutely, so the series may be split up like this.
\end{proof}

The function $\zeta^{c_1,c_2}_{p,q}(\Omega,s)$ here is a Dirichlet series summed over a double cone, with any lattice points on the boundary of the cone weighted by $\foh$.  The coefficients of the terms are $\pm e\left(p^\top n\right)$, where the sign is determined by whether one is in the positive or negative part of the double cone.

\begin{thm}\label{thm:zetastable}
Suppose 
$(c_1,c_2,p+\Omega q,\Omega)$
is $P$-stable.  Then, $\xi^{c_1}_{p,q}(\Omega,s)=\xi^{c_2}_{p,q}(\Omega,s)$ and $\widehat{\zeta}^{c_1,c_2}_{p,q}(\Omega,s) = \pi^{-s}\Gamma(s)\zeta^{c_1,c_2}_{p,q}(\Omega,s)$.
\end{thm}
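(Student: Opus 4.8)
The plan is to reduce the claim to its first assertion, $\xi^{c_1}_{p,q}(\Omega,s)=\xi^{c_2}_{p,q}(\Omega,s)$, from which the second follows at once: in the decomposition \cref{eq:split} of \Cref{thm:splitsy} the term $\pi^{-(s+\foh)}\Gamma(s+\foh)(\xi^{c_2}_{p,q}(\Omega,s)-\xi^{c_1}_{p,q}(\Omega,s))$ then vanishes, leaving $\widehat{\zeta}^{c_1,c_2}_{p,q}(\Omega,s)=\pi^{-s}\Gamma(s)\zeta^{c_1,c_2}_{p,q}(\Omega,s)$ for $\re(s)>1$, and since $\widehat{\zeta}^{c_1,c_2}_{p,q}(\Omega,\cdot)$ is entire by \Cref{thm:zetafun} the identity (with its right-hand side suitably continued) holds for all $s$. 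To prove $\xi^{c_1}_{p,q}=\xi^{c_2}_{p,q}$ I would change the summation variable by $n\mapsto P^{-1}n$ in the absolutely convergent series defining $\xi^{c_1}_{p,q}(\Omega,s)$; this is the hypergeometric-series analogue of the substitution $n\mapsto Pn$ applied to the theta series in the proof of \Cref{thm:stability}.

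First I would unwind $P$-stability. Writing $\Omega=N+iM$ and separating real and imaginary parts, $P^\top\Omega P=\Omega$ yields $P^\top MP=M$ and $P^\top NP=N$, hence $P^\top M=MP^{-1}$, $P^\top N=NP^{-1}$, and $P^{-\top}\Omega P^{-1}=\Omega$. The congruence $P^\top(p+\Omega q)\con p+\Omega q\Mod{\Z^g}$ means that $P^\top(p+\Omega q)-(p+\Omega q)$ is a real vector lying in $\Z^g$; its imaginary part is therefore $0$, i.e.\ $P^\top Mq=Mq$, so $MP^{-1}q=Mq$ and (as $M$ is invertible) $Pq=q$. Feeding this back, the real part of the same difference equals $P^\top p+NP^{-1}q-p-Nq=P^\top p-p$, so $P^\top p-p\in\Z^g$, equivalently $P^{-\top}p\con p\Mod{\Z^g}$. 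In particular $n\mapsto P^{-1}n$ is a bijection of $\Z^g+q$, since $P\in\GL_g(\Z)$ and $P^{-1}q=q$.

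Next I would verify that the $n$-th summand of $\xi^{c_1}_{p,q}(\Omega,s)$ at $P^{-1}n$ coincides with the $n$-th summand of $\xi^{c_2}_{p,q}(\Omega,s)$, where $c_2=Pc_1$. We have $c_1^\top M(P^{-1}n)=c_1^\top P^\top Mn=(Pc_1)^\top Mn=c_2^\top Mn$; moreover $Q_M(c_1)=Q_M(Pc_1)=Q_M(c_2)$ and $Q_{-i\Omega}(P^{-1}n)=Q_{-i\Omega}(n)$ by the invariances above; and $e(p^\top P^{-1}n)=e(p^\top n)$, which follows from $P^{-1}q=q$ and $P^{-\top}p-p\in\Z^g$ after writing $n=\nu+q$ with $\nu\in\Z^g$. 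Hence $\sgn(c_1^\top M(P^{-1}n))$, the power $((c_1^\top MP^{-1}n)^2/Q_M(c_1))^{-s}$, the argument $2Q_M(c_1)Q_{-i\Omega}(P^{-1}n)/(c_1^\top MP^{-1}n)^2$ of $\hf$, and the exponential all turn into the corresponding $c_2$-quantities at $n$. Reindexing the absolutely convergent series by the bijection $n\mapsto P^{-1}n$ therefore gives $\xi^{c_1}_{p,q}(\Omega,s)=\xi^{c_2}_{p,q}(\Omega,s)$, completing the proof.

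The one part requiring care is the middle paragraph---extracting $Pq=q$ and $P^{-\top}p\con p\Mod{\Z^g}$ from the definition of $P$-stability and then tracking $P^\top M=MP^{-1}$, $P^\top MP=M$, and $P^{-\top}\Omega P^{-1}=\Omega$ through each factor of the hypergeometric summand---but this is no genuine obstacle, being in essence the same computation already carried out for the theta series in the proof of \Cref{thm:stability}.
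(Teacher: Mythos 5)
Your proposal is correct and follows essentially the same route as the paper, which simply states that $\xi^{c_1}_{p,q}=\xi^{c_2}_{p,q}$ ``follows by the substitution $n\mapsto Pn$ and the definition of $P$-stability'' and then invokes \Cref{thm:splitsy}; you have merely carried out that substitution (in the equivalent direction $n\mapsto P^{-1}n$) and verified the invariance of each factor explicitly. The details you supply --- extracting $Pq=q$ and $P^{-\top}p\equiv p\Mod{\Z^g}$ from $P$-stability, and noting the analytic continuation needed to pass from $\re(s)>1$ to all $s$ --- are all correct and fill in steps the paper leaves implicit.
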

\begin{proof}
The equality of the $\xi^{c_j}_{p,q}(\Omega,s)$ follows by the substitution $n \mapsto Pn$ and the definition of $P$-stability.  The equation 
\begin{equation}
\widehat{\zeta}^{c_1,c_2}_{p,q}(\Omega,s) = \pi^{-s}\Gamma(s)\zeta^{c_1,c_2}_{p,q}(\Omega,s)
\end{equation}
then follows from \Cref{thm:splitsy}.
\end{proof}

\section{Zeta functions of ray ideal classes in real quadratic fields}\label{sec:quadratic}

In this section, we will specialise indefinite zeta functions to obtain certain zeta functions to obtain certain zeta functions attached to real quadratic fields.
We define two Dirichlet series, $\zeta(s,A)$ and $Z_A(s)$, attached to a ray ideal class $A$ of the ring of integers of a number field.
\begin{defn}[Ray class zeta function]\label{defn:rayzeta}
Let $K$ be any number field and $\cc$ an ideal of the maximal order $\OO_K$.  Let $S$ be a subset of the real places of $K$ (i.e., the embeddings $K \inj \R$).  Let $A$ be a ray ideal class modulo $\cc S$, that is, an element of the group
\begin{equation}\label{eq:Cl}
\Cl_{\cc S} := \Cl_{\cc S}(\OO_K) := \frac{\{\mbox{nonzero fractional ideals of $\OO_K$ coprime to $\cc$}\}}{\{a\OO_K : a \con 1 \Mod{\cc} \mbox{ and $a$ is positive at each place in $S$}\}}.
\end{equation}
Define the \textit{zeta function of $A$} to be
\begin{equation}
\zeta(s,A) := \sum_{\aa \in A} N(\aa)^{-s}.
\end{equation}
\end{defn}
This function has a simple pole at $s=1$ with residue independent of $A$.  The pole may be eliminated by considering the function $Z_A(s)$, defined as follows.
\begin{defn}[Differenced ray class zeta function]\label{defn:diffzeta}
Let $R$ be the element of $\Cl_{\cc S}$ defined by 
\begin{equation}
R:= \{a\OO_K : a \con -1 \Mod{\cc} \mbox{ and $a$ is positive at each place in $S$}\}.
\end{equation}
Define the \textit{differenced zeta function of $A$} to be
\begin{equation}
Z_A(s) := \zeta(s,A) - \zeta(s,RA).
\end{equation}
\end{defn}
The function $Z_A(s)$ is holomorphic at $s=1$.

Now, specialise to the case where $K = \Q(\sqrt{D})$ be a real quadratic field of discriminant $D$.  Let $\OO_K$ be the maximal order of $K$, and let $\cc$ be an ideal of $\OO_K$.  Let $A$ be a narrow ray ideal class modulo $\cc$, that is, an element of the group $\Cl_{\cc\infty_1\infty_2}(\OO_K)$.
We show, as promised in the introduction, that the indefinite zeta function specialises to the $L$-series $Z_A(s)$ attached to a ray class of an order in a real quadratic field.

\begin{thm:special}
For each $A \in \Cl_{\cc\infty_1\infty_2}$ and integral ideal $\bb \in A^{-1}$, there exists a real symmetric $2 \times 2$ matrix $M$, vectors $c_1, c_2 \in \R^2$,and $q \in \Q^2$ such that
\begin{equation}
(2\pi N(\bb))^{-s}\Gamma(s)Z_A(s) = \widehat{\zeta}^{c_1,c_2}_{0,q}(iM,s).
\end{equation}
\end{thm:special}
\begin{proof}
The differenced zeta function $Z_A(s)$ is
\begin{equation}
Z_A(s) = \sum_{\aa\in A} N(\aa)^{-s} - \sum_{\aa\in RA} N(\aa)^{-s}.
\end{equation}
We have
\begin{align}
N(\bb)^{-s}Z_A(s) 
&= \sum_{\aa\in A} N(\bb\aa)^{-s} - \sum_{\aa\in RA} N(\bb\aa)^{-s} \\
&= \sum_{\substack{b \in \bb \\ (b) \in I \\ \scriptsize\mbox{up to units}}} N(b)^{-s} - \sum_{\substack{b \in \bb \\ (b) \in R \\ \scriptsize\mbox{up to units}}} N(b)^{-s}.
\end{align}
Write $\bb\cc = \g_1\Z+\g_2\Z$.  The norm form $N(n_1\g_1+n_2\g_2) = Q_M\coltwo{n_1}{n_2}$ for some real symmetric matrix $M$ with integer coefficients.  The signature of $M$ is $(1,1)$, just like the norm form for $K$.  Since $\bb$ and $\cc$ are relatively prime (meaning $\bb+\cc = \oO_K$), there exists by the Chinese remainder theorem some $b_0 \in \oO_K$ such that $b \con b_0 \Mod{\bb\cc}$ if and only if $b \con 0 \Mod{\bb}$ and $b \con 1 \Mod{\cc}$.  Express $b_0 = p_1\g_1+p_2\g_2$ for rational numbers $p_1,p_2$, and set $p=\coltwo{p_1}{p_2}$.

Let $\e_0$ be the fundamental unit of $\oO_K$, and let $\e$ ($=\e_0^k$ for some $k$) be the smallest totally positive unit of $\oO_K$ greater than 1 such that $\e \con 1 \Mod{\cc}$.

Choose any $c_1 \in \R^2$ such that $Q_M(c_1)<0$.  Let $P$ be the matrix describing the linear action of $\e$ on $\bb$ by multiplication, i.e., $\e(\b^\top n) = \b^\top(Pn)$.  Set $c_2=Pc_1$.

Thus, we have
\begin{align}
N(\b)^{-s}Z_A(s) = \foh \sum_{n \in \Z^2+q} \left(\sgn(c_2^\top M n)-\sgn(c_1^\top M n)\right)Q_M(n). \label{eq:train}
\end{align}
Moreover, 
$(c_1,c_2,p,\Omega)$
is $P$-stable.  So, by \Cref{thm:zetastable}, \cref{eq:train} may be rewritten as
\begin{align}
(2\pi N(\bb))^{-s}\Gamma(s)Z_A(s) = \widehat{\zeta}^{c_1,c_2}_{0,q}(iM,s),
\end{align}
completing the proof.
\end{proof}

\subsection{Example}\label{sec:tree}
Let $K = \Q(\sqrt{3})$, so $\O_K = \Z[\sqrt{3}]$, and let $\cc = 5\O_K$.
The ray class group $\Cl_{\cc\infty_2} \isom \Z/8\Z$.
The fundamental unit $\e = 2+\sqrt{3}$ is totally positive: $\e\e' = 1$.  It has order $3$ modulo $5$: $\e^3 = 26+15\sqrt{3} \con 1 \Mod{5}$.
In this section, we use the analytic continuation \cref{eq:Thetacont2} for indefinite zeta functions to compute $Z_I'(0)$, where $I$ is the principal ray class of $\Cl_{\cc\infty_2}$.

By definition, $Z_I = \zeta(s,I) - \zeta(s,R)$ where
\begin{align}
R &= \{a\oO_K : a \con -1 \Mod{\cc} \mbox{ and $a$ is positive at $\infty_2$}\} \\
   &= \{a\oO_K : a \con 1 \Mod{\cc} \mbox{ and $a$ is negative at $\infty_2$}\}.
\end{align}
Write $I = I_+ \sqcup I_-$ and $R = R_+ \sqcup R_-$, where $I_{\pm}$ and $R_{\pm}$ are the following ray ideal classes in $\Cl_{\cc\infty_1\infty_2}$:
\begin{align}
I_{\pm} &:= \{a\oO_K : a \con 1 \Mod{\cc} \mbox{ and $a$ has sign $\pm$ at $\infty_1$ and $+$ at $\infty_2$}\}, \\
R_{\pm} &:= \{a\oO_K : a \con 1 \Mod{\cc} \mbox{ and $a$ has sign $\pm$ at $\infty_1$ and $-$ at $\infty_2$}\}.
\end{align}

Thus, $Z_I(s) = \zeta(s,I_+) + \zeta(s,I_-) - \zeta(s,R_+) - \zeta(s,R_-)$.  The Galois automorphism $(a_1+a_2\sqrt{3})^\s = (a_1-a_2\sqrt{3})$ defines a norm-preserving bijection between $I_-$ and $R_+$, so the middle terms cancel and
\begin{equation}\label{eq:noplus}
Z_I(s) = \zeta(s,I_+) - \zeta(s,R_-) = Z_{I_+}(s).
\end{equation}

To the principal ray class $I_+$ of $\Cl_{\cc\infty_1\infty_2}$, we associate $\Omega = i M$ where $M = \mattwo{2}{0}{0}{-6}$ and $q = \coltwo{1/5}{0}$.  We may choose $c_1\in \R^2$ arbitrarily so long as $c_1^\top M c_1 < 0$; take $c_1 = \coltwo{0}{1}$.  The left action of $\e$ on $\Z+\sqrt{3}\Z$ is given by the matrix $P=\mattwo{2}{3}{1}{2}$.  By \Cref{thm:special},
\begin{equation}\label{eq:wing}
(2\pi)^{-s}\Gamma(s)Z_{I_+}(s) = \widehat{\zeta}^{c_1,P^3c_1}_{0,q}(iM,s).
\end{equation}
Taking a limit as $s \to 0$, and using \cref{eq:noplus}, \cref{eq:wing} becomes
\begin{equation}
Z_I'(0) = Z_{I_+}'(s) = \widehat{\zeta}^{c_1,P^3c_1}_{0,q}(iM,0).
\end{equation}
For the purpose of making the numerical computation more efficient, we split up the right-hand side as
\begin{align}
Z_I'(0) 
&= \widehat{\zeta}^{c_1,Pc_1}_{0,q}(iM,0) + \widehat{\zeta}^{Pc_1,P^2c_1}_{0,q}(iM,0) + \widehat{\zeta}^{P^2c_1,P^3c_1}_{0,q}(iM,0) \\
&= \widehat{\zeta}^{c_1,Pc_1}_{0,q_0}(iM,0) + \widehat{\zeta}^{c_1,Pc_1}_{0,q_1}(iM,0) + \widehat{\zeta}^{c_1,Pc_1}_{0,q_2}(iM,0),
\end{align}
where $q_0 = q = \frac{1}{5}\coltwo{1}{0}$, $q_1 = q = \frac{1}{5}\coltwo{2}{1}$, and $q_2 = q = \frac{1}{5}\coltwo{2}{4}$ are obtained from the residues of $\e^0, \e^1, \e^2$ modulo $5$.  

Using \cref{eq:Thetacont2}, we computed $Z_I'(0)$ to 100 decimal digits.  The decimal begins
\begin{align}
Z_I'(0) = 1.35863065339220816259511308230\ldots.
\end{align}
The conjectural Stark unit is $\exp(Z_I'(0)) = 3.89086171394307925533764395962\ldots$.  
We used the \texttt{RootApproximant[]} function in Mathematica, which uses lattice basis reduction internally, to find a degree 16 integer polynomial having this number as a root, and we factored that polynomial over $\Q(\sqrt{3})$.  To 100 digits, $\exp(Z_I'(0))$ is equivalent to be the root of the polynomial
\begin{align}
x^8 &- (8 + 5\sqrt{3})x^7 + (53 + 30\sqrt{3})x^6 - (156 + 90\sqrt{3})x^5 + (225 + 130\sqrt{3})x^4\nn\\ 
   &- (156 + 90\sqrt{3})x^3 + (53 + 30\sqrt{3})x^2 - (8 + 5\sqrt{3})x + 1.
\end{align}
We have verified that this root generates the expected class field $H_2$.

We have also computed $Z_I'(0)$ a different way in PARI/GP, using its internal algorithms for computing Hecke L-values.  We obtained the same numerical answer this way.

\section{Acknowledgements}

This research was partially supported by National Science Foundation (USA) grants DMS-1401224, DMS-1701576, and DMS-1045119, and by the Heilbronn Institute for Mathematical Research (UK).

This paper 
incorporates material from the author's PhD thesis \cite{koppthesis}.
Thank you to Jeffrey C. Lagarias for advising my PhD and for many helpful conversations about the content of this paper. Thank you to Marcus Appleby, Jeffrey C. Lagarias, Kartik Prasanna, and two anonymous referees for helpful comments and corrections.

\section{Conflict of interest statement}

On behalf of all authors, the corresponding author states that there is no conflict of interest.

\bibliographystyle{plain}
\bibliography{references}

\begin{thebibliography}{10}

\bibitem{generror}
Sergei Alexandrov, Sibasish Banerjee, Jan Manschot, and Boris Pioline.
\newblock Indefinite theta series and generalized error functions.
\newblock {\em Selecta Mathematica}, 24(5):3927--3972, 2018.

\bibitem{special}
George~E. Andrews, Richard Askey, and Ranjan Roy.
\newblock {\em Special Functions}, volume~71 of {\em Encyclopedia of
  Mathematics and its Applications}.
\newblock Cambridge University Press, Cambridge, 1999.

\bibitem{bfor}
Kathrin Bringmann, Amanda Folsom, Ken Ono, and Larry Rolen.
\newblock {\em Harmonic Maass Forms and Mock Modular Forms: Theory and
  Applications}.
\newblock American Mathematical Society, 2017.

\bibitem{bo1}
Kathrin Bringmann and Ken Ono.
\newblock The $f(q)$ mock theta function conjecture and partition ranks.
\newblock {\em Inventiones mathematicae}, 165(2):243--266, 2006.

\bibitem{blackholes}
Atish Dabholkar, Sameer Murthy, and Don Zagier.
\newblock Quantum black holes, wall crossing, and mock modular forms.
\newblock {\em arXiv preprint arXiv:1208.4074}, 2012.

\bibitem{duke}
William Duke, {\"O}zlem Imamoḡlu, and {\'A}rpad T{\'o}th.
\newblock Cycle integrals of the $j$-function and mock modular forms.
\newblock {\em Annals of Mathematics}, pages 947--981, 2011.

\bibitem{umbral}
John F.~R. Duncan, Michael~J. Griffin, and Ken Ono.
\newblock Proof of the umbral moonshine conjecture.
\newblock {\em Research in the Mathematical Sciences}, 2(1):26, 2015.

\bibitem{for}
Amanda Folsom, Ken Ono, and Robert~C. Rhoades.
\newblock Mock theta functions and quantum modular forms.
\newblock In {\em Forum of mathematics, Pi}, volume~1. Cambridge University
  Press, 2013.

\bibitem{griffiths1}
Phillip~A. Griffiths.
\newblock Periods of integrals on algebraic manifolds, {I} (construction and
  properties of the modular varieties).
\newblock {\em American Journal of Mathematics}, 90(2):568--626, 1968.

\bibitem{griffiths2}
Phillip~A. Griffiths.
\newblock Periods of integrals on algebraic manifolds, {II} (local study of the
  period mapping).
\newblock {\em American Journal of Mathematics}, 90(3):805--865, 1968.

\bibitem{griffiths3}
Phillip~A. Griffiths.
\newblock Periods of integrals on algebraic manifolds, {III} (some global
  differential-geometric properties of the period mapping).
\newblock {\em Publications Math{\'e}matiques de l'IH{\'E}S}, 38:125--180,
  1970.

\bibitem{koppthesis}
Gene~S. Kopp.
\newblock {\em Indefinite Theta Functions and Zeta Functions}.
\newblock PhD thesis, University of Michigan, Ann Arbor, Michigan, USA, August
  2017.

\bibitem{koppklf}
Gene~S. Kopp.
\newblock A {K}ronecker limit formula for indefinite zeta functions.
\newblock {\em arXiv preprint arXiv:2010.16371}, 2020.

\bibitem{langabelian}
Serge Lang.
\newblock {\em Introduction to Algebraic and Abelian Functions}, volume~89 of
  {\em Graduate Texts in Mathematics}.
\newblock Springer, 2nd edition, 1982.

\bibitem{milne}
James~S. Milne.
\newblock Abelian varieties.
\newblock Online at \texttt{www.jmilne.org/math/CourseNotes/AV.pdf}, 2008.

\bibitem{nazaroglu}
Caner Nazaroglu.
\newblock $r$-tuple error functions and indefinite theta series of
  higher-depth.
\newblock {\em Communications in Number Theory and Physics}, 12(3):581--608,
  2018.

\bibitem{neukirch}
J{\"u}rgen Neukirch.
\newblock {\em Algebraic Number Theory}, volume 322.
\newblock Springer Science \& Business Media, 2013.

\bibitem{thetabook}
Harry~E. Rauch and Hershel~M. Farkas.
\newblock {\em Theta Functions with Applications to Riemann Surfaces}.
\newblock The Williams \& Wilkins Company, 1974.

\bibitem{roehrig}
Christina Roehrig.
\newblock Siegel theta series for indefinite quadratic forms.
\newblock {\em arXiv preprint arXiv:2009.08230}, 2020.

\bibitem{rolennotes}
Larry Rolen.
\newblock Notes from ``{S}chool on mock modular forms and related topics'',
  {K}yushu {U}niversity.
\newblock Online at \texttt{https://math.vanderbilt.edu/rolenl/Notes.html},
  2016.

\bibitem{shintanizeta}
Takuro Shintani.
\newblock On evaluation of zeta functions of totally real algebraic number
  fields at non-positive integers.
\newblock {\em J. Fac. Sci., Univ. Tokyo, Sect. IA}, 23:393--417, 1976.

\bibitem{vig1}
Marie-France Vign{\'e}ras.
\newblock S{\'e}ries th{\^e}ta des formes quadratiques ind{\'e}finies.
\newblock {\em S{\'e}minaire Delange-Pisot-Poitou. Th{\'e}orie des nombres},
  17(1):1--3, 1976.

\bibitem{vig2}
Marie-France Vign{\'e}ras.
\newblock S{\'e}ries th{\^e}ta des formes quadratiques ind{\'e}finies.
\newblock In {\em Modular Functions of One Variable VI}, pages 227--239.
  Springer, 1977.

\bibitem{weilinter}
Andre Weil.
\newblock On {P}icard varieties.
\newblock {\em American Journal of Mathematics}, 74(4):865--894, 1952.

\bibitem{westerholt}
Martin Westerholt-Raum.
\newblock Indefinite theta series on cones.
\newblock {\em arXiv preprint arXiv:1608.08874}, 2016.

\bibitem{zagiermock}
Don Zagier.
\newblock Ramanujan's mock theta functions and their applications (after
  {Z}wegers and {O}no-{B}ringmann).
\newblock In {\em S{\'e}minaire Bourbaki. Vol. 2007/2008 Expos\'{e}s 982-996},
  number 326, pages 143--164, 2009.

\bibitem{zwegers}
Sander Zwegers.
\newblock {\em Mock Theta Functions}.
\newblock PhD thesis, Universiteit Utrecht, Utrecht, The Netherlands, October
  2002.

\end{thebibliography}

\end{document}